\documentclass[11pt,a4paper]{article}
\usepackage[utf8]{inputenc}
\usepackage{amsmath}
\usepackage{amsfonts}
\usepackage{amssymb}
\usepackage{amsthm}
\usepackage{graphicx}
\usepackage{caption}
\usepackage{tabularx}
\usepackage{subcaption}
\usepackage{bm}
\usepackage[left=2.5cm,right=2.5cm,top=2cm,bottom=3cm]{geometry}
\usepackage{enumitem}
\usepackage{soul}
\usepackage[dvipsnames]{color}
\usepackage{hyperref}
\usepackage{comment}

\definecolor{nb}{RGB}{31,119,180}
\definecolor{no}{RGB}{255,127,14}
\definecolor{ng}{rgb}{0,0.5,0}

\newtheorem*{rem}{Remark}

\newtheorem{prop}{Proposition}[section]
\newtheorem{defi}{Definition}[section]

\usepackage[sort]{natbib}

\usepackage{numbered_constants}

\newtheorem{theo}{Theorem}

\newtheorem{lemma}[prop]{Lemma}

\newtheorem{cor}[prop]{Corollary}

\title{Eco-evolutionary cycles in a matching type predator-prey interaction}
\date{01/09/2026}

\newcommand{\prey}{N^K}
\newcommand{\pred}{H^K}
\newcommand{\eps}{\varepsilon}

\newcommand{\N}{\mathbb{N}}
\newcommand{\R}{\mathbb{R}}
\newcommand{\E}{\mathbb{E}}
\renewcommand{\P}{\mathbb{P}}

\newcommand{\zbf}{\mathbf{z}}

\usepackage{authblk}

\DeclareConstant{C}{C}
\begin{document}

\title{Eco-evolutionary cycles in a matching type predator-prey interaction}
\author[1]{Manon Costa}
\author[2]{Peter Czuppon}
\author[3]{Raphaël Forien}
\affil[1]{Univ Toulouse, INSA Toulouse, CNRS, IMT, Toulouse, France.}
\affil[2]{Aix Marseille Univ, CNRS, I2M, Marseille, France}
\affil[3]{INRAE, BioSP, 84000, Avignon, France}

\maketitle

\begin{abstract}
We study the population dynamics of a predator-prey system with two types in each species. Within a species, predator or prey, dynamics are described by a neutral competitive Lotka-Volterra model, i.e., birth, death and competition parameters are equal for both types. Additionally, we assume that the intra- and inter-type competition parameters are equal. The predator-prey interaction is defined by a matching-types model where predators of type $i$ exclusively interact with prey of type $i$. The individual-based model is described by a birth-death process with immigration, where immigration reflects mutations between the types of the same species. We completely describe the deterministic dynamics arising as a large population limit of this birth-death process. We find that depending on the parameters, potential equilibria are the coexistence of all four types, coexistence of a non-matching or matching pair of predators and prey, or the extinction of the predator or prey species resulting in a line of two-type equilibria. When mutations are sufficiently rare, then the predator-prey dynamics are described by successive jumps between the different deterministic equilibria on this mutational time scale. These jumps describe eco-evolutionary cycles of repeated prey or predator invasions and declines. 
When coexistence of all the types is possible, we show that these cycles accumulate on this time scale.
Lastly, to prove that after the accumulation point the system converges to the coexistence equilibrium, we consider a slightly modified model with unequal intra- and inter-type competition parameters. This modified setting allows us to conclude that after the accumulation point all four populations remain macroscopic and converge to the coexistence equilibrium.
\end{abstract}

\section{Introduction}
Predator-prey models were among the first theoretical descriptions of interactions between different species~\cite{lotka,volterra}. Accordingly, predator-prey interactions (as well as host-parasites interactions) are well documented empirically and abundantly studied theoretically (see \cite{yamamichi2020} for a recent review in the context of co-evolution). 
Most theoretical works study and classify the asymptotic behaviour of systems of ordinary differential equations with multiple, mostly two or three, interacting populations (e.g. \cite{Takeuchi,TA83,TAT78}), but stochastic models exist as well, considering either noisy versions of deterministic systems or individual-based models (e.g. \cite{costa2014,hening2018,golmohammadi2025}). 

In this article, we study the co-evolutionary dynamics of a predator-prey or host-parasite system that are often described as Red Queen dynamics, where species have to adapt continuously to persist \cite{marrow1992coevolution,dieckmann1995evolutionary}.
This phenomenon is usually associated with oscillations in genotype or phenotype due to selection that favours rare types.  These co-evolutionary oscillations are predicted theoretically, e.g~\cite{Schenk}, but are difficult to validate empirically, though some evidence in host-parasite and host-pathogen systems exists~\cite{ebertBio,papkou2018}. Throughout this article, we will refer to predator-prey interactions, but interpretations in terms of host-parasite systems hold as well.  

To describe the changes in genotypes induced by such coevolutionary behaviour, we focus on the case where the predator-prey interaction is described by a matching-allele model~\cite{dybdahl2014}. That is, the two predators are specialized and only prey on a single ``matching'' prey species. This specialized predation behaviour can be due to specific mutations, e.g. the prey develops a toxin that the predator needs to be resistant against~\cite{holding2016}.

Matching-allele models have been applied abundantly to describe cyclic dynamics arising from antagonistic co-evolutionary dynamics, e.g.~\cite{ashby2019,schenk2017,song2015}. These cycles arise from rare types being advantageous over abundant types. That is, the rare prey has an advantage over the abundant prey because its matching predator is rare. Rare predators indirectly benefit from their unmatching prey being controlled by the abundant predator, thus helping the matching prey population to increase. However, these dynamics are exclusively studied in models with fixed and finite population sizes, or in models with specialist predators, i.e., predators do not have alternative resources to feed from and are not in competition with each other; for recent reviews of different co-evolutionary models studied in the theoretical biology literature we refer to~\cite{buckingham2022} and \cite{Schenk}. 

Here, we model the dynamics as a multi-type birth and death process with logistic competition and predator-prey interaction. We consider two types of prey and two types of predators which are neutral except for the predator-prey interaction. We consider that individuals can mutate to the other type or switch their type at birth with a probability which decreases as a power of the population size. 
We study the large population size limit of this stochastic predator-prey model with two predators and two prey populations including mutations at a rapid scale between the different phenotypes of the populations. 
Such a scaling, which has been introduced by Durett and Mayberry \cite{durrett_traveling_2011} and popularized by \cite{bovier_crossing_2019, champagnat_stochastic_2021,coquille_stochastic_2021}, allows one to follow the dynamics of populations with different sizes: macroscopic populations or \textit{resident populations} will be compared to solutions of differential equations, while microscopic population invasions or decays will be studied using comparisons with branching processes. The main novelty of our work is to consider a biological system with different species interacting with each other, which requires complex couplings taking positive and negative interactions into account, and the fact that we also characterise the limiting behaviour of the system (or a slightly modified version) near and after the accumulation point of these invasion times.

Previous works in the context of Red-Queen dynamics typically find stable oscillations over time in the absence of intraspecific competition. In contrast, by including intraspecific competition we find that cyclic dynamics for phenotypes cannot be maintained indefinitely. Instead, the possible outcomes of our model are convergence to an equilibrium with all four species or convergence to equilibria with only the two prey or predator species.

Interestingly, the trajectories leading to these equilibria may exhibit transient cyclic dynamics. These cycles correspond to successive invasion attempts of predator or prey species into a resident population. An invasion attempt will change the proportions of the residents, which then triggers a new invasion event. This process repeats itself until either all species are at a macroscopic scale (of some order $K$) or until the proportion of the resident species exceeds a critical value so that no further invasion of a non-macroscopic species is possible. 
We will identify the asymptotic limits of the system and provide conditions under which these are attainable.

\section{Matching types predator-prey model}
We consider two populations with a predator-prey interaction, where each population is composed of two types of individuals, denoted by $0$ and $1$. We assume interactions according to a matching alleles model, that is, predators of type $ i \in\{0, 1\}$ thrive in the presence of prey of type $ i $, while prey of type $ j\in\{0, 1\} $ are more likely to survive in the absence of predators of type $ j $.

Each population evolves according to a sequence of birth and death events, whose respective rates depend on the state of the total population. Each individual produces offspring of its own type most of the time, but can on some rare occasion produce offspring of the other type.
The four-dimensional stochastic system will be indexed by a parameter $ K > 0 $ which corresponds to the order of the total prey population size. The predator population size will be proportional to $ K^{m} $ for some $ m >0 $. The parameter $m$ allows to consider population with different typical sizes. The case where predators are rarer than prey $m<1$ corresponds to what is expected at the first sight, but the opposite case is also relevant for example when considering prey as trees and predators as insects or parasites (see Robinson and al. \cite{Umea} for the study of Aspen canopy and its arthropod community or Ludwig and al. \cite{ludwig1978qualitative} for the interaction between spruce bud-worm and the forest). From a biological perspective, this scaling can be associated with the metabolic theory. The metabolic theory links the mass of individuals with their metabolic rates. Numerous experimental studies display relationships between the individual mass and the birth and death rates or the community carrying capacity (see Brown and al. \cite{brown2004toward}, Damuth \cite{Damuth81}). 

\subsection{Individual-based model}
	
For $ t \geq 0 $ and $ i \in \lbrace 0, 1 \rbrace $, let $ \prey_i(t) $ (resp. $ \pred_i(t) $) denote the number of prey (resp. predators) of type $ i $ alive at time $ t $.
We also set
\begin{align*}
	\prey(t) = \prey_0(t) + \prey_1(t), && \text{ and } && \pred(t) = \pred_0(t) + \pred_1(t).
\end{align*}

Each prey of type $ i \in \lbrace 0, 1 \rbrace $ produces new offspring at rate $ b > 0 $ and dies at rate 
\[ d + c_K \prey(t) + p_K \pred_i(t) \]
where $ d > 0 $ denotes the natural death rate, $ c_K > 0 $ the competition and $ p_K > 0 $ the predation rate.
Each new prey is either of the same type as its parent (with probability $ 1-v_K $), or of the other type (with probability $ v_K $), for some $ v_K \in (0,1) $.

Each predator of type $ j \in \lbrace 0, 1 \rbrace $ produces new offspring at rate \[ \beta + \rho_K \prey_j(t) \] that includes natural births at rate $\beta>0$ and the positive effect from predation at rate $\rho_K>0$. Each predator dies at rate $ \delta + \gamma_K \pred(t) $, composed of a natural death rate $ \delta> 0 $ and a logistic competition rate $ \gamma_K > 0 $.
Moreover, each new predator inherits the type of its parent with probability $ 1-\vartheta_K $, or mutates to the other type with probability $ \vartheta_K $, for some $ \vartheta_K \in (0,1) $.

Hence, we observe that all the prey (resp. predators) are in competition with one another, irrespective of their type, since the death rates increase with the size of the total prey (resp. predator) population. Moreover, in this model the type has no influence on the ecological parameters $b,d,c,\beta, \delta$ and $\gamma$. This specific choice allows to observe a variety of behaviours and will induce mathematical challenges due to the large amount of symmetry.
We also see that the death rate of prey of type $ i $ increases with the size of the type $ i $ predator population, while the birth rate of predators of type $ j $ increases with the size of the type $ j $ prey population.
\medskip

The expected behaviour of this process is as follows.
If the predator population is mostly of a single type, say 0, then type 1 prey have a lower death rate than type 0 prey, and as a consequence the type 1 prey population should increase in size at the expense of the type 0 prey population.
When the prey are mostly of type 1, however, type 1 predators have a higher birth rate than predators of type 0, hence type 1 predators should replace predators of type 0, thus shifting the advantage in the prey population in favour of type 0, leading to another invasion of type 0 prey, \textit{etc.}
The aim of this study is to investigate how the parameters of the model determine the interval between these ``switches''.
\medskip

We denote by $ (Z^K(t), t \geq 0) $ the pure jump Markov process taking values in $ \N^4 $ that describes the behaviour of the total population:
	\begin{align*}
		Z^K(t) = (\prey_0(t), \prey_1(t), \pred_0(t), \pred_1(t)),
	\end{align*}
where the birth and death rates of each population are summarized in Table~\ref{table:rates}. 

We assume that parameters depend on $K$ as follows:
	\begin{align*}
	c_K = \frac{c}{K}, && \gamma_K = \frac{\gamma}{K^{m}}, && p_K = \frac{p}{K^m}, && \rho_K = \frac{\rho}{K},
	\end{align*}
	where $ c $, $ \gamma $, $ p $, $ \rho $ are positive constants, $ m > 0 $ .
	For clarity and when possible, we use Latin and Greek letters for parameters referring to prey and predator populations, respectively. \\
These assumptions imply that the typical size of the prey (resp. predator) population is $K$ (resp $K^m$). We will therefore say that a prey (resp. predator) population is at a macroscopic level when its size is of the order of $K$ (resp. $K^m$), and is at a microscopic level otherwise. The parameter $m$ influences the scaling of the population dynamics.

We also assume that the mutation rates scale with $K$ as
\begin{align*}
v_K = \frac{1}{K^v}, && \vartheta_K = \frac{1}{K^{m\vartheta }},
\end{align*}
and $ v $ and $ \vartheta $ are in $ (0,1) $.
This scaling corresponds to relatively rapid mutations since when a population is macroscopic, the number of mutations stemming from this population will be of the order of $K^{1-v}$ or $K^{m(\vartheta-1)}$ which grows to $\infty$ as $K\to\infty$, but not as fast as $K$ (or $K^m$).
\begin{table}[h]
\centering
\renewcommand{\arraystretch}{2}
\begin{tabularx}{\textwidth}{| l | X | X |}
\hline
\textbf{Population} & \textbf{Total birth rate} & \textbf{Total death rate} \\
\hline
$ \prey_i $, $i \in \lbrace 0,  1 \rbrace$ & $ b (1-v_K) \prey_i(t) + b\, v_K \prey_{1-i}(t) $ & $ (d + c_K\prey(t)+ p_K \pred_i(t)) \prey_i(t) $ \\
\hline
$ \pred_i $, $i \in \lbrace 0,  1 \rbrace$ & $ (\beta + \rho_K \prey_i(t)) (1-\vartheta_K) \pred_i(t) + \vartheta_K (\beta + \rho_K \prey_{1-i}(t)) \pred_{1-i} $ & $ (\delta + \gamma_K \pred(t)) \pred_i(t) $ \\
\hline
\end{tabularx}
\caption{Birth and death rates of the different populations: prey, predators.
}
\label{table:rates}
\end{table}

\subsection{Large population limit} 
\label{subsec:dynamical_system_results}

In the following analysis, we will often assume that the initial population sizes $\prey_0(0),\prey_1(0)$ (resp.  $(\pred_0(0),\pred_1(0)$) are of order $K$ (resp. $K^m$).
As a consequence, we introduce a rescaled stochastic process 
\begin{equation} \label{def:Zsc}
	(\bm{Z}^K_{sc}(t),t\geq0)=\left(\frac{\prey_0(t)}{K},\frac{\prey_1(t)}{K}, \frac{\pred_0(t)}{K^m}, \frac{\pred_1(t)}{K^m}\right),
\end{equation}
which will be comparable to a solution of the dynamical system
\begin{equation}\label{eq:syst-4}
\left\{\begin{aligned}
&\frac{dn_0(t)}{dt}= n_0(t)(b-d-c(n_0(t)+n_1(t))-ph_0(t))\\
&\frac{dn_1(t)}{dt}= n_1(t)(b-d-c(n_0(t)+n_1(t))-ph_1(t))\\
&\frac{dh_0(t)}{dt}=h_0(t)(\beta-\delta-\gamma (h_0(t)+h_1(t)) +\rho n_0(t))\\
&\frac{dh_1(t)}{dt}=h_1(t)(\beta-\delta-\gamma (h_0(t)+h_1(t)) +\rho n_1(t))\\
\end{aligned}\right.
\end{equation}
Let us denote by 
$$(\bm{z}^{(\bm{z}^0)}(t),t\geq0)=(n_0(t), n_1(t), h_0(t), h_1(t))_{t\geq0},$$ 
the unique solution to system~\eqref{eq:syst-4} starting from 
$\bm{z}^{(\bm{z}^0)}(0)=\bm{z}^0 \in \R_+^4$. 
Existence and uniqueness of such a solution follow from the fact that the vector field is locally Lipschitz and that the solutions do not explode in finite time \cite{chicone2006ode}. 
We have the following classical approximation result from Theorem 2.1 p.456 in \cite{Ethier-Kurtz}. 

\begin{lemma}\label{lemapprox}
 Fix $T > 0$. Assume that the sequence $(\bm{Z}^K_{sc}(0),K \geq 1)$ converges in probability when $K \to \infty$ to a deterministic vector ${\bm{z}^0} \in \R_+^4$.
 Then 
\begin{equation}\label{EK2}
\underset{K \to \infty}{\lim}\  \sup_{s\in [0,T]}\ \|\bm{Z}^K_{sc}(s)-\bm{z}^{(\bm{z}^0)}(s)  \|=0 \quad \text{in probability},
\end{equation}
where $\| \cdot \|$ denotes the $L^\infty$ norm on $\R^4$.
\end{lemma}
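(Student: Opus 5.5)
The plan is to prove the convergence directly by a martingale decomposition combined with Gronwall's lemma. The reason for not simply quoting the density-dependent framework of \cite{Ethier-Kurtz} is that prey and predators are rescaled by different powers, $K$ and $K^m$, and the mutation probabilities $v_K,\vartheta_K$ depend on $K$. The argument is the same as the one behind Theorem 2.1 p.456 of \cite{Ethier-Kurtz}, adapted to these two scales.

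\textbf{Step 1: semimartingale decomposition.} For each coordinate of $\bm{Z}^K_{sc}$, write
\[
\bm{Z}^K_{sc}(t)=\bm{Z}^K_{sc}(0)+\int_0^t F_K(\bm{Z}^K_{sc}(s))\,ds+M^K(t),
\]
where $M^K$ is a square-integrable martingale (after localisation). The drift $F_K$ is read off from Table~\ref{table:rates}. For the prey coordinate $i$ it is
\[
x_i\bigl(b(1-v_K)-d-c(x_0+x_1)-p y_i\bigr)+b v_K x_{1-i},
\]
with $x=(x_0,x_1)$ the rescaled prey and $y=(y_0,y_1)$ the rescaled predators. For the predator coordinate $i$ it is
\[
y_i\bigl((\beta+\rho x_i)(1-\vartheta_K)-\delta-\gamma(y_0+y_1)\bigr)+\vartheta_K(\beta+\rho x_{1-i})y_{1-i}.
\]
The scalings $c_K=c/K$, $p_K=p/K^m$, $\rho_K=\rho/K$ and $\gamma_K=\gamma/K^m$ are exactly what make these expressions depend only on the rescaled variables. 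Since $v_K,\vartheta_K\to0$, we get $\sup_{\|\bm z\|\le R}\|F_K(\bm z)-F(\bm z)\|\le C_R(v_K+\vartheta_K)\to0$, where $F$ is the vector field of \eqref{eq:syst-4}.

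\textbf{Step 2: control of the martingale part.} Prey jumps have size $1/K$ and occur at total rate $O(K)$ on bounded sets. Predator jumps have size $1/K^m$ and occur at total rate $O(K^m)$. The predictable quadratic variations are therefore bounded by $C_R t/K$ and $C_R t/K^m$ respectively, as long as $\bm{Z}^K_{sc}$ stays in the ball of radius $R$. To use this, introduce the stopping time $\tau_R^K=\inf\{t:\|\bm{Z}^K_{sc}(t)\|\ge R\}$. Doob's maximal inequality then gives
\[
\E\Bigl[\sup_{t\le T\wedge\tau^K_R}\|M^K(t)\|^2\Bigr]\le C_R T\,(K^{-1}+K^{-m})\to0,
\]
which uses $m>0$.

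\textbf{Step 3: Gronwall and removal of the localisation.} The solution $\bm z^{(\bm z^0)}$ is bounded on $[0,T]$, say by $R_0$; take $R=R_0+2$. Let $L_R$ be the Lipschitz constant of $F$ on the ball of radius $R$. On $[0,T\wedge\tau_R^K]$,
\[
\|\bm{Z}^K_{sc}(t)-\bm z(t)\|\le\|\bm{Z}^K_{sc}(0)-\bm z^0\|+TC_R(v_K+\vartheta_K)+\sup_{s\le T\wedge\tau^K_R}\|M^K(s)\|+L_R\int_0^t\|\bm{Z}^K_{sc}(s)-\bm z(s)\|\,ds.
\]
Gronwall's lemma bounds the supremum of the difference by $e^{L_RT}$ times a quantity that tends to $0$ in probability. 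The first term does so by the assumption on $\bm{Z}^K_{sc}(0)$, the second deterministically, the third by Step 2.

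It remains to show $\P(\tau_R^K<T)\to0$. On $\{\tau^K_R< T\}$ the process has left the ball of radius $R$ before time $T$. Since jumps have size at most $\max(K^{-1},K^{-m})\to0$, at time $\tau^K_R$ the distance to $\bm z(\tau^K_R)$ is at least $1$. This event therefore has vanishing probability by the bound above, and the localisation can be removed, which yields \eqref{EK2}.

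The main obstacle is only the bookkeeping of the two scales and of the non-Lipschitz, unbounded rates. Both are handled by the stopping time $\tau_R^K$ and by checking that every term of the predator drift, including the cross term $\rho_K\prey_j \pred_i/K^m=\rho\, x_j y_i$, is indeed a function of the rescaled variables alone.
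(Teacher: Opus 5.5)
Your proof is correct, but it follows a different route from the paper. The paper gives no argument for Lemma~\ref{lemapprox} and simply invokes Theorem~2.1 p.~456 of \cite{Ethier-Kurtz}.

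That theorem concerns density-dependent families with a single scaling parameter $n$: jumps of size $l/n$ occur at rates $n\beta_l(x)$, with $\beta_l$ independent of $n$. It is proved via the Poisson random time-change representation, the law of large numbers for Poisson processes and Gronwall's lemma, and gives almost sure convergence for almost surely convergent initial data. The present model does not fit that statement literally: prey and predators are rescaled by $K$ and $K^m$, and the mutation probabilities $v_K$, $\vartheta_K$ depend on $K$. The citation therefore tacitly relies on a routine extension, and your semimartingale decomposition, Doob's $L^2$ inequality and localisation at $\tau^K_R$ supply exactly that extension in a self-contained way.

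Your route also yields an explicit estimate,
$\sup_{t\le T\wedge\tau^K_R}\|\bm{Z}^K_{sc}(t)-\bm{z}(t)\|\le e^{L_RT}\bigl(\|\bm{Z}^K_{sc}(0)-\bm{z}^0\|+TC_R(v_K+\vartheta_K)+\sup_{s\le T\wedge\tau^K_R}\|M^K(s)\|\bigr)$,
where the martingale term is of order $\sqrt{T}K^{-(1\wedge m)/2}$ in $L^2$. This quantitative form is what is needed when the time horizon grows like $\log K$. The paper in fact redoes essentially your argument (generator decomposition, mutation remainder, exit time from a compact set, Doob's inequality, Gronwall) in Step~3 of Section~\ref{subsec:accumulation_point}. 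The citation buys only brevity and an almost sure statement, which is not needed here.

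One small simplification: when removing the localisation you do not need the jump sizes. By definition of $\tau^K_R$ and right-continuity, $\|\bm{Z}^K_{sc}(\tau^K_R)\|\ge R=R_0+2$. Your Gronwall bound already holds at $t=\tau^K_R$, because the drift integral only involves times $s<\tau^K_R$.
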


However, in the scaling limit we will consider, not all populations will always be macroscopic (i.e. of the order of $K$ for prey and of the order of $K^m$ for predators).
In these situations, we will need to study the trajectories of \eqref{eq:syst-4} when some populations are absent.
We therefore briefly present the cases that will be of interest in the rest of the article.

\subsubsection{Two types systems}
\paragraph{Non-matching predator-prey system.}
Let us assume that $\bm{Z}^K_{sc}(0)$ converges as $K\to\infty$ to a vector with only a positive number of one type of prey and its non-matching predator, say $(n_0(0), 0,0, h_1(0))$.
Then the limiting system that describes the behaviour of $(n_0(t), h_1(t))$ consists of two independent logistic equations:
\begin{equation}\label{eq:2-non-matching}
\left\{\begin{aligned}
&\frac{dn_0(t)}{dt}= n_0(t)(b-d-cn_0(t))\ ,\\
&\frac{dh_1(t)}{dt}=h_1(t)(\beta-\delta-\gamma h_1(t) )\ .
\end{aligned}\right.
\end{equation}
We will denote by $\bar{n}$ and $\bar{h}$ the non-zero equilibria of these equations, namely
$$(\bar{n},\bar{h}):=\left( \frac{b-d}{c},\frac{\beta-\delta}{\gamma}\right)\, .$$

\begin{prop} \label{prop:non-matching}
Assume that  $b - d > 0$.
Then, if $n_0(0) > 0$ and $h_1(0) > 0$, the solution of \eqref{eq:2-non-matching} converges as $t\to\infty$ to
\begin{itemize}
\item $(\bar{n},\bar{h})$ if $\beta-\delta>0$,
\item $(\bar{n}, 0)$ otherwise.
\end{itemize}
\end{prop}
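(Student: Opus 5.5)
The system \eqref{eq:2-non-matching} decouples into two scalar logistic equations, so I will treat each component separately and then put the two limits together. The only tool needed is the explicit behaviour of a scalar logistic equation $\dot x = x(r - a x)$ with $a>0$ and $x(0)>0$. I will first record this behaviour as a small lemma, then apply it to $n_0$ with $(r,a)=(b-d,c)$ and to $h_1$ with $(r,a)=(\beta-\delta,\gamma)$.

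\emph{Preliminary fact.} Uniqueness gives $x(t)>0$ for all $t$, because $x\equiv 0$ is itself a solution and no other solution can reach it. Suppose first that $r>0$. The derivative $\dot x$ has the sign of $r/a - x$, so $x$ is monotone and stays on the same side of $r/a$ for all time. It is therefore bounded and converges to a limit $\ell>0$, and $\ell$ must be a zero of the vector field, i.e.\ $\ell=r/a$. Alternatively, one can write the explicit solution
\[
x(t)=\frac{r\,x(0)}{a\,x(0)+(r-a\,x(0))e^{-rt}}
\]
and read off the limit directly.

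Now suppose $r\le 0$. Then $\dot x \le -a x^2<0$, so $x$ decreases to a limit $\ell\ge 0$. That limit must be an equilibrium of the equation, and the only non-negative one is $0$, so $x(t)\to 0$. When $r<0$ the convergence is even exponential, since $\dot x\le rx$.

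\emph{Application.} Under the hypothesis $b-d>0$ and $n_0(0)>0$, the fact above gives $n_0(t)\to (b-d)/c=\bar n$. For $h_1(0)>0$, the fact gives $h_1(t)\to(\beta-\delta)/\gamma=\bar h$ when $\beta-\delta>0$, and $h_1(t)\to 0$ otherwise. Because the two equations are independent, these two component-wise limits together give the stated convergence of the pair $(n_0,h_1)$.

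The argument is entirely elementary, so there is no real obstacle. The only point needing care is the boundary case $\beta=\delta$. Here convergence to $0$ is algebraic rather than exponential, since $h_1(t)=h_1(0)/(1+\gamma h_1(0)t)$, but the monotonicity argument still covers it.
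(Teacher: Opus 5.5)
Your proof is correct and follows the route the paper intends. The paper omits the proof as elementary, observing only that \eqref{eq:2-non-matching} consists of two independent logistic equations. Your componentwise logistic analysis, including the boundary case $\beta=\delta$, supplies exactly those missing details.
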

\noindent The proof of the above result is elementary. In all the analyses that follow, we will always assume that $b-d>0$ so that $\bar{n}>0$.

We note that in most of the theoretical literature on predator-prey models the difference $\beta-\delta$ is negative, i.e., the predator cannot exist without the prey. A value $\beta-\delta$ larger than zero corresponds to a situation where the predator can feed on alternative resources and is referred to as a generalist~\cite{hanski1991}. We mainly study here the case where $\beta-\delta>0$ which presents the most interesting dynamics.
 
\paragraph{Matching predator-prey system.}
Let us assume that in the limit, only one prey population and its matching predator are present, for example $\bm{Z}^K_{sc}(0)$ converges to $(n_0(0), 0, h_0(0),0)$ for some $n_0(0), h_0(0)$ positive.
Then the limiting system describing the behaviour of $(n_0(t), h_0(t))$ is a predator-prey Lotka-Volterra system with competition 
\begin{equation}\label{eq:2-matching}
\left\{\begin{aligned}
&\frac{dn(t)}{dt}= n(t)(b-d-cn(t)-ph(t))\ ,\\
&\frac{dh(t)}{dt}=h(t)(\beta-\delta-\gamma h(t) +\rho n(t))\ .
\end{aligned}\right.
\end{equation}
This system admits four equilibria in $\R^2$:
$$(0,0),\quad \left(0,\frac{\beta-\delta}{\gamma}\right), \quad \left(\frac{b-d}{c},0\right),$$
and 
\begin{equation}
\label{eq:eq_matching}
(\hat{n},\hat{h}) :=\left( \frac{(b-d)\gamma -p(\beta-\delta)}{p\rho +c\gamma},\frac{(b-d)\rho+c(\beta-\delta)}{p\rho +c\gamma}\right) .
\end{equation}
This coexistence equilibrium can be rewritten as
\begin{align} \label{eq:n_hat_h_hat}
    \hat{n} = \frac{\bar{n} - \frac{p}{c} \bar{h}}{1 + \frac{p \rho}{c \gamma}}, && \hat{h} = \frac{\bar{h} + \frac{\rho}{\gamma} \bar{n}}{1 + \frac{p \rho}{c \gamma}}.
\end{align}

The coexistence equilibrium is feasible if the following two conditions hold
\begin{equation}\label{eq:cond_equilibria_matching}
\hat{n}>0\iff \bar{h} < \frac{c}{p} \bar{n}\,,\quad\quad  \hat{h}>0 \iff \bar{h} > -\frac{\rho}{\gamma} \bar{n}\, .
\end{equation}

\begin{prop}
\label{prop:cv_sys_matching}
Let us assume $b-d>0$.
\begin{enumerate}[label=\roman*)]
\item If $\bar{h} \leq - \frac{\rho}{\gamma} \bar{n} $, for any positive initial condition the solution of system \eqref{eq:2-matching} converges to $(\bar{n},0)$ as $ t \to \infty $.
\item If $ - \frac{\rho}{\gamma} \bar{n} < \bar{h} < \frac{c}{p} \bar{n} $ (i.e. \eqref{eq:cond_equilibria_matching} is satisfied), then for any positive initial condition, the solution of system \eqref{eq:2-matching} converges to 
$(\hat{n},\hat{h}) $ as $ t \to \infty $.
\item Finally, if $ \bar{h} \geq \frac{c}{p} \bar{n} $, for any positive initial condition, the solution of system \eqref{eq:2-matching} converges to $(0,\bar{h})$ as $ t \to \infty $.
\end{enumerate}
\end{prop}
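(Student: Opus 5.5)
Our approach is to construct Lyapunov functions of Volterra type, which is standard for Lotka--Volterra predator--prey systems with self-limitation. Write system \eqref{eq:2-matching} as
\[
\dot n = n\,\bigl(c(\bar n - n) - p h\bigr),\qquad \dot h = h\,\bigl(\gamma(\bar h - h) + \rho n\bigr),
\]
using $b-d=c\bar n$ and $\beta-\delta=\gamma\bar h$. We first record the basic facts. The positive quadrant is invariant, since each coordinate axis is invariant. Solutions are bounded: $\dot n\le n(b-d-cn)$ gives $\limsup n\le \bar n$, and then $\dot h\le h(\gamma\bar h+\rho(\bar n+\eps)-\gamma h)$ bounds $h$. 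Hence every trajectory started in $(0,\infty)^2$ has a nonempty compact $\omega$-limit set, which we will identify with LaSalle's invariance principle.

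For case ii), take
\[
V(n,h)=\rho\Bigl(n-\hat n-\hat n\ln\tfrac{n}{\hat n}\Bigr)+p\Bigl(h-\hat h-\hat h\ln\tfrac{h}{\hat h}\Bigr).
\]
The equilibrium relations $c\hat n+p\hat h=b-d$ and $\gamma\hat h-\rho\hat n=\beta-\delta$ let us rewrite the dynamics as $\dot n=n(-c(n-\hat n)-p(h-\hat h))$ and $\dot h=h(-\gamma(h-\hat h)+\rho(n-\hat n))$. A direct computation then gives
\[
\dot V=-\rho c(n-\hat n)^2-p\gamma(h-\hat h)^2\le 0,
\]
because the cross terms $-\rho p$ and $+p\rho$ cancel. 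Since $\hat n,\hat h>0$, $V$ is proper on $(0,\infty)^2$, and $\dot V=0$ only at $(\hat n,\hat h)$. Therefore every positive trajectory converges to $(\hat n,\hat h)$.

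Cases i) and iii) are the delicate part, because the candidate limit sits on the boundary. In case iii), set $\hat h_0=\bar h$ and use
\[
V=\rho n+p\Bigl(h-\bar h-\bar h\ln\tfrac{h}{\bar h}\Bigr).
\]
Writing $\dot n=n\bigl(c\bar n-p\bar h-cn-p(h-\bar h)\bigr)$ and $\dot h=h\bigl(-\gamma(h-\bar h)+\rho n\bigr)$, we get
\[
\dot V=\rho n(c\bar n-p\bar h)-\rho c n^2-p\gamma(h-\bar h)^2 .
\]
The first term is $\le 0$ exactly when $\bar h\ge \frac{c}{p}\bar n$, so $\dot V\le0$. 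Moreover $\dot V=0$ forces $n=0$ and $h=\bar h$. $V$ is proper in the $h$-direction on $\{h>0\}$ and bounded below, so LaSalle applied on $[0,\infty)\times(0,\infty)$ gives convergence to $(0,\bar h)$.

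Case i) is symmetric: use
\[
V=\rho\Bigl(n-\bar n-\bar n\ln\tfrac{n}{\bar n}\Bigr)+p h,
\]
which gives
\[
\dot V=-\rho c(n-\bar n)^2+p h(\gamma\bar h+\rho\bar n)-p\gamma h^2 .
\]
This is $\le0$ when $\gamma\bar h+\rho\bar n\le0$, i.e. $\bar h\le-\frac{\rho}{\gamma}\bar n$, with equality only at $(\bar n,0)$. Hence the solution converges to $(\bar n,0)$.

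The main obstacle is the borderline equalities. In case iii) with $\bar h=\frac{c}{p}\bar n$, and in case i) with $\bar h=-\frac{\rho}{\gamma}\bar n$, the linear term vanishes, but the strictly negative quadratic terms $-\rho c n^2$ and $-p\gamma h^2$ still make the zero set of $\dot V$ a single point. So LaSalle still applies, and we will check this explicitly in the proof. We also need to verify carefully that the sublevel sets of $V$ stay away from $h=0$ (respectively $n=0$) in cases iii) and i), which follows from the logarithmic term.
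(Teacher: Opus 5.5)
Your proposal is correct and follows the paper's proof. Your three Lyapunov functions are positive multiples, up to additive constants, of the paper's $V$ in cases i), ii) and iii), and the derivative computations coincide; your explicit use of LaSalle's principle on the half-open domains and your check of the borderline cases $\bar h = \frac{c}{p}\bar n$ and $\bar h = -\frac{\rho}{\gamma}\bar n$ (where the linear term vanishes rather than being strictly negative) make rigorous steps that the paper leaves implicit.
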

\noindent
The proof is given in Section~\ref{proof:2SpecMatching}.

\subsubsection{Four types system}
We return to the case where all four types are present in the limit that is described by system~\eqref{eq:syst-4}. 
If a positive equilibrium $(n_0^*,n_1^*,h_0^*,h_1^*) \in (\R_+^*)^4$ exists then necessarily $n_0^*=n_1^*=n^*$ and $h_0^*=h_1^*=h^*$ and 
$$(n^*,h^*) :=\left( \frac{(b-d)2\gamma -p(\beta-\delta)}{p\rho +4c\gamma},\frac{(b-d)\rho+2c(\beta-\delta)}{p\rho +4c\gamma}\right) .
$$
The other equilibria of this system are
\begin{equation*}
    (0,0,0,0), \quad (a\bar{n}, (1-a)\bar{n}, 0,0), a\in[0,1], \quad (0,0, \alpha\bar{h}, (1-\alpha)\bar{h}), \alpha\in[0,1], 
\end{equation*}
as well as the previously described equilibria $(\bar{n}, 0,0,  \bar{h})$, $(0, \bar{n},  \bar{h},0)$, $(\hat{n},0, \hat{h},0)$ and $(0,\hat{n},0, \hat{h})$.
Note that we can also write
\begin{align} \label{eq:n_star_h_star}
    n^* = \frac{2\bar{n} - \frac{p}{c} \bar{h}}{4 + \frac{p \rho}{c \gamma}}, && h^* = \frac{2\bar{h} + \frac{\rho}{\gamma} \bar{n}}{4 + \frac{p \rho}{c \gamma}}.
\end{align}

The feasibility conditions for the coexistence equilibrium $(n^*, n^*, h^*, h^*)$, where all coordinates are positive, read
\begin{equation}
\label{eq:cond_4_types}
n^*>0\iff \bar{h} < \frac{2 c}{p} \bar{n}\,,\quad\quad
h^*>0\iff \bar{h} > - \frac{\rho}{2 \gamma} \bar{n}\, .
\end{equation}
Note that condition \eqref{eq:cond_4_types} is less restrictive than condition  \eqref{eq:cond_equilibria_matching} for the coexistence of matching prey and predators, independent of the sign of $\beta-\delta$.
The different regions (in terms of the value taken by $\bar{h}$ relative to $\bar{n}$) where the different equilibria exist are shown in Figure~\ref{fig:cases}.

\begin{prop}
\label{prop:cv_syst_4}
Assume that the inequalities in \eqref{eq:cond_4_types} hold, then every solution of \eqref{eq:syst-4} starting from a positive initial condition converges to the positive equilibrium $(n^*, n^*, h^*, h^*).$
\end{prop}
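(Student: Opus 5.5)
The plan is to use a Volterra-type Lyapunov function, as for classical Lotka--Volterra systems, and then apply LaSalle's invariance principle. Write $r=b-d$, $s=\beta-\delta$, $N=n_0+n_1$ and $H=h_0+h_1$. Under \eqref{eq:cond_4_types} we have $n^*,h^*>0$. The equilibrium satisfies $r-2cn^*-ph^*=0$ and $s-2\gamma h^*+\rho n^*=0$. Hence \eqref{eq:syst-4} can be rewritten as
\begin{align*}
\dot n_i = n_i\bigl(-c(N-2n^*)-p(h_i-h^*)\bigr), && \dot h_i = h_i\bigl(-\gamma(H-2h^*)+\rho(n_i-n^*)\bigr).
\end{align*}
On $(\R_+^*)^4$ define
\[
V(\mathbf z)=\rho\sum_{i=0,1}\Bigl(n_i-n^*-n^*\log\frac{n_i}{n^*}\Bigr)+p\sum_{i=0,1}\Bigl(h_i-h^*-h^*\log\frac{h_i}{h^*}\Bigr).
\]

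\textbf{Step 1: $V$ is non-increasing.} Differentiating along trajectories gives
\[
\dot V=\rho\sum_i (n_i-n^*)\bigl(-c(N-2n^*)-p(h_i-h^*)\bigr)+p\sum_i (h_i-h^*)\bigl(-\gamma(H-2h^*)+\rho(n_i-n^*)\bigr).
\]
The weights $\rho$ and $p$ are chosen so that the cross terms $\mp p\rho(n_i-n^*)(h_i-h^*)$ cancel. Since $\sum_i(n_i-n^*)=N-2n^*$ and $\sum_i(h_i-h^*)=H-2h^*$, we obtain
\[
\dot V=-\rho c(N-2n^*)^2-p\gamma(H-2h^*)^2\le 0.
\]

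\textbf{Step 2: compactness.} $V$ tends to $+\infty$ both at the boundary of the positive orthant and at infinity. Therefore the sublevel set $\{V\le V(\mathbf z^0)\}$ is a compact subset of $(\R_+^*)^4$, and it is forward invariant by Step 1. So every trajectory started from a positive initial condition is precompact and stays away from the boundary. LaSalle's principle then says that the $\omega$-limit set is contained in the largest invariant subset $M$ of $\{N=2n^*,\,H=2h^*\}$.

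\textbf{Step 3: identify $M$.} This is the only step needing real work, since $\dot V$ is merely semi-definite. On $M$ the constraints hold for all times, so we may write $n_0-n^*=x$, $n_1-n^*=-x$, $h_0-h^*=y$ and $h_1-h^*=-y$. The reduced equations are $\dot n_0=-p\,n_0 y$ and $\dot n_1=p\,n_1 y$. Invariance of $N$ requires $\dot n_0+\dot n_1=0$, that is $-p\,y\,(n_0-n_1)=0$. Using the $h$-equations the same way gives $\rho\, x\,(h_0+h_1)=0$, since $\dot h_0=\rho h_0 x$ and $\dot h_1=-\rho h_1 x$.

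Here I expect to double-check the signs carefully. From $\dot n_0=\dot x$ and $\dot n_1=-\dot x$ we get $-p n_0 y=-p n_1 y$, hence $y(n_0-n_1)=0$. Similarly $\rho h_0 x=\rho h_1 x$, hence $x(h_0-h_1)=0$, i.e. $2xy=0$.

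So on $M$ we have $xy\equiv0$. Suppose $y\neq0$ on some time interval. Then $x=0$ there, so $\dot x=-p n^* y\neq 0$, a contradiction. Hence $y\equiv0$. Then $\dot y=\rho h^* x$ forces $x\equiv0$. Therefore $M=\{(n^*,n^*,h^*,h^*)\}$.

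\textbf{Conclusion.} By LaSalle's principle every positive trajectory converges to $(n^*,n^*,h^*,h^*)$.
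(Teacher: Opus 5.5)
Your proof is correct and follows the paper's route: you use the same Volterra-type Lyapunov function (the paper's version multiplied by $\rho$), which gives the same derivative $-\rho c(N-2n^*)^2-p\gamma(H-2h^*)^2$. The paper ends by asserting that $\dot V$ vanishes only at $(n^*,n^*,h^*,h^*)$, which is false since $\dot V=0$ on the whole set $\{N=2n^*,\,H=2h^*\}$, so your compactness argument and your LaSalle step identifying the largest invariant subset supply exactly what the paper omits (fix the slip $\rho x(h_0+h_1)=0$, which should read $\rho x(h_0-h_1)=0$, as your own recheck shows).
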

\noindent
The proof is given in Section~\ref{proof:4SpeciesCoexist}.

\subsubsection{Three types systems}
The final cases of interest correspond to situations where three populations temporarily coexist in the large population limit: either the two prey and a predator, or the two predators and a prey.

\paragraph{Two prey, one predator.}
We first study the dynamics of system~\eqref{eq:syst-4} when initialized with two prey and one predator population. Then if $\bar{h}>0$ the system converge to the non-matching equilibrium $(\bar{n},0,0,\bar{h})$ as $t\to\infty$.

\begin{prop}
    \label{prop:cv_3types}Let $n_0(0), n_1(0), h_0(0)>0$. Assume $\beta-\delta>0$. Then, the solution of \eqref{eq:syst-4} started from $(n_0(0), n_1(0), h_0(0), 0)$ converges to $(0,\bar{n}, \bar{h},0)$ as $t\to\infty$.
\end{prop}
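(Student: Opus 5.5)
The plan is to exploit the fact that, started from $(n_0(0),n_1(0),h_0(0),0)$, the face $\{h_1=0\}$ is invariant for \eqref{eq:syst-4}. The dynamics therefore reduce to the three-dimensional system
\begin{equation*}
n_0'=n_0\bigl(b-d-c(n_0+n_1)-ph_0\bigr),\qquad n_1'=n_1\bigl(b-d-c(n_0+n_1)\bigr),\qquad h_0'=h_0\bigl(\beta-\delta-\gamma h_0+\rho n_0\bigr),
\end{equation*}
with all three coordinates staying positive for all times. The key observation is that the ratio of the two prey evolves in a very simple way:
\begin{equation*}
\frac{d}{dt}\log\frac{n_0(t)}{n_1(t)}=-p\,h_0(t),
\end{equation*}
because the competition terms cancel. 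So it suffices to show that $h_0$ stays bounded away from $0$.

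\textbf{Step 1 (a priori bounds).} The total prey $n=n_0+n_1$ satisfies $n'\le n(b-d-cn)$, so by comparison with the logistic equation $\limsup_{t\to\infty} n(t)\le \bar n$, and $n$ is bounded on $[0,\infty)$. Since $\rho n_0\ge 0$, we have $h_0'\ge h_0(\beta-\delta-\gamma h_0)$. Using $\beta-\delta>0$ and comparison with the logistic equation (Proposition~\ref{prop:non-matching}), this gives $\liminf_{t\to\infty} h_0(t)\ge \bar h>0$. Conversely, $h_0'\le h_0(\beta-\delta+\rho \sup n-\gamma h_0)$, so $h_0$ is bounded as well.

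\textbf{Step 2 (extinction of $n_0$).} By Step 1 there is $t_0$ with $h_0(t)\ge \bar h/2$ for $t\ge t_0$. The identity above then yields $n_0(t)/n_1(t)\le C e^{-p\bar h t/2}$. Since $n_1$ is bounded, $n_0(t)\to0$ exponentially fast.

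\textbf{Step 3 (limits of $n_1$ and $h_0$).} Fix $\eps>0$. For $t$ large we have $0\le cn_0\le\eps$ and $0\le\rho n_0\le\eps$. Hence
\begin{equation*}
n_1(b-d-\eps-cn_1)\le n_1'\le n_1(b-d-cn_1),\qquad h_0(\beta-\delta-\gamma h_0)\le h_0'\le h_0(\beta-\delta+\eps-\gamma h_0).
\end{equation*}
Comparison with logistic equations (with $n_1>0$, and $b-d-\eps>0$ for $\eps$ small) gives
\begin{equation*}
\frac{b-d-\eps}{c}\le\liminf_{t\to\infty} n_1\le\limsup_{t\to\infty} n_1\le\bar n,\qquad \bar h\le\liminf_{t\to\infty} h_0\le\limsup_{t\to\infty} h_0\le \frac{\beta-\delta+\eps}{\gamma}.
\end{equation*}
Letting $\eps\to0$ gives $(n_0,n_1,h_0,h_1)\to(0,\bar n,\bar h,0)$.

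The only delicate point is Step 1, namely the uniform positive lower bound on $h_0$. This is exactly where the assumption $\beta-\delta>0$ is used: without it, $h_0$ could decay once $n_0$ declines, and the ratio argument would fail. All other steps are routine comparison arguments with scalar logistic equations.
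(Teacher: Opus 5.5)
Your proof is correct and follows essentially the same route as the paper. The paper tracks the prey proportion $a=n_0/(n_0+n_1)$ via $a'=-p\,a(1-a)h_0$, which is equivalent to your identity $\frac{d}{dt}\log(n_0/n_1)=-p\,h_0$. Your version is more complete. The paper only invokes positivity of $h_0$, whereas you give the uniform bound $\liminf h_0\ge\bar h>0$ that actually forces $n_0\to 0$. You also add the final logistic comparisons for $n_1$ and $h_0$, which the paper leaves implicit.
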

\noindent
The proof is given in Section~\ref{proof:3Species}. 

\paragraph{One prey, two predators.}
Lastly, we gather results on the one prey-two predator dynamical system:
\begin{equation}\label{eq:syst-3}
\left\{\begin{aligned}
&\frac{dn_0(t)}{dt}= n_0(t)(b-d-c n_0(t))-ph_0(t))\\
&\frac{dh_0(t)}{dt}=h_0(t)(\beta-\delta-\gamma (h_0(t)+h_1(t)) +\rho n_0(t))\\
&\frac{dh_1(t)}{dt}=h_1(t)(\beta-\delta-\gamma (h_0(t)+h_1(t)) ).\\
\end{aligned}\right.
\end{equation}

We will first show that if the matching equilibrium exists, i.e., if condition \eqref{eq:cond_equilibria_matching} holds, then the solution of this system converges to the matching types equilibrium as $t \to \infty$.

\begin{prop}
\label{prop:syst-3-matching}
Assume that \eqref{eq:cond_equilibria_matching} holds. Then the solution of \eqref{eq:syst-3} starting from $(n_0(0), h_0(0), h_1(0))$ with $n_0(0)$, $h_0(0)$, $h_1(0)>0$ converges to $(\hat{n},\hat{h},0)$ as $t\to\infty$.
\end{prop}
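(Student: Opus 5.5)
The plan is to build a Lyapunov function of Volterra type centred at $(\hat{n},\hat{h},0)$ and then apply LaSalle's invariance principle. Under \eqref{eq:cond_equilibria_matching} both $\hat{n}>0$ and $\hat{h}>0$. From \eqref{eq:eq_matching} we have the identities
\[
b-d=c\hat{n}+p\hat{h}, \qquad \beta-\delta=\gamma\hat{h}-\rho\hat{n}.
\]
These let us rewrite the per-capita growth rates in \eqref{eq:syst-3} as
\begin{align*}
b-d-cn_0-ph_0&=-c(n_0-\hat{n})-p(h_0-\hat{h}),\\
\beta-\delta-\gamma(h_0+h_1)+\rho n_0&=-\gamma(h_0-\hat{h})-\gamma h_1+\rho(n_0-\hat{n}),\\
\beta-\delta-\gamma(h_0+h_1)&=-\gamma(h_0-\hat{h})-\gamma h_1-\rho\hat{n}.
\end{align*}

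On $(0,\infty)^3$ we set
\[
V(n_0,h_0,h_1)=\rho\Big(n_0-\hat{n}-\hat{n}\log\frac{n_0}{\hat{n}}\Big)+p\Big(h_0-\hat{h}-\hat{h}\log\frac{h_0}{\hat{h}}\Big)+p\,h_1 .
\]
The weights $\rho$ and $p$ are chosen so that the cross terms $\pm p\rho(n_0-\hat{n})(h_0-\hat{h})$ cancel. The coefficient $p$ on the linear term $h_1$ is chosen so that the mixed terms in $h_1(h_0-\hat{h})$ complete a square. Differentiating along solutions and using the identities above, I expect
\[
\frac{d}{dt}V=-\rho c\,(n_0-\hat{n})^2-p\gamma\,(h_0-\hat{h}+h_1)^2-p\rho\,\hat{n}\,h_1\;\le\;0 .
\]
The last term is strictly negative whenever $h_1>0$, precisely because $\hat{n}>0$. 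This is where the matching feasibility condition enters in an essential way, and not only the positivity of $\hat h$. Indeed, $\frac{d}{dt}\log(h_0/h_1)=\rho n_0$, so the type-$1$ predator is strictly outcompeted as long as the prey persists.

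Next, $V$ is nonnegative on the open orthant. It tends to $+\infty$ as $n_0\to0$, as $h_0\to0$, or as $\|(n_0,h_0,h_1)\|\to\infty$. Sublevel sets are therefore compact subsets of $(0,\infty)^2\times[0,\infty)$, and the trajectory stays in one of them. By LaSalle's invariance principle, the $\omega$-limit set is contained in the largest invariant subset of $\{\dot V=0\}=\{n_0=\hat{n},\,h_1=0,\,h_0=\hat{h}\}$, which is the single point $(\hat{n},\hat{h},0)$. Hence the solution converges to $(\hat{n},\hat{h},0)$.

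The main obstacle is finding the right weights so that $\dot V$ becomes a sum of nonpositive terms; the choice above is what I would try first. The remaining step that needs care is compactness: the $\omega$-limit set must avoid the faces $n_0=0$ and $h_0=0$, which the logarithmic terms guarantee. The face $h_1=0$ is allowed and is invariant, so LaSalle applies on the closed set $(0,\infty)^2\times[0,\infty)$.
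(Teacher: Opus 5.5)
Your proof is correct. The computation checks out: with $b-d=c\hat n+p\hat h$ and $\beta-\delta=\gamma\hat h-\rho\hat n$ one gets exactly $\dot V=-\rho c(n_0-\hat n)^2-p\gamma(h_0+h_1-\hat h)^2-p\rho\hat n h_1$. The sublevel sets are compact in $(0,\infty)^2\times[0,\infty)$, and the zero set of $\dot V$ there is the single point $(\hat n,\hat h,0)$, so LaSalle finishes the argument.

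Your route differs from the paper's. The paper works with the proportion $\alpha=h_1/(h_0+h_1)$, which satisfies $\dot\alpha=-\rho\,\alpha(1-\alpha)n_0$ and is therefore monotone. It argues persistence of $n_0$ by bounding $h_0$ from above through the predator equation of the matching system. Convergence of $(n_0,h_0)$ to $(\hat n,\hat h)$ is then left implicit, as an appeal to the two-type result, Proposition~\ref{prop:cv_sys_matching}~ii).

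Your $V$ is $p\rho$ times the two-type function \eqref{eq:Lyap_2species} plus the linear term $p\,h_1$, the same device the paper uses in \eqref{eq:Lyap_4species}. This buys a self-contained proof that handles all three coordinates at once. In particular, you never need to show $\int_0^\infty n_0(t)\,dt=\infty$, which is what $\alpha\to0$ really requires; mere positivity of $n_0$, which is all the paper states, is not enough. You also avoid passing from $h_1\to0$ to convergence of the remaining two-dimensional system via an asymptotically autonomous argument. Your argument is insensitive to the sign of $\beta-\delta$, and the term $-p\rho\hat n h_1$ shows directly that $\hat n>0$ is what eliminates $h_1$.

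The paper's route, in exchange, exposes the mechanism: the matching predator's relative advantage is exactly $\rho n_0$. It also uses the same proportion equation as Proposition~\ref{prop:syst-3}. There $\hat n<0$ and the limit lies on a line of equilibria depending on the initial data, so a Lyapunov function centred at a single point is not available, and the later invasion analysis is organized around $\alpha$.
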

\noindent
The proof is given in Section~\ref{proof:3types-matching}.
\smallskip

Lastly, we will show that if $\bar{h} > 0 $, i.e., $\beta-\delta>0$, and if condition~\eqref{eq:cond_equilibria_matching} is violated, i.e., the matching equilibrium $\hat{n}$ is negative, this system admits a line of equilibria of the form $(0, \alpha\bar{h}, (1-\alpha)\bar{h})$ for $\alpha\in[0,1]$ and no equilibrium with the three populations. 
Additionally, we aim to characterize the dynamics of the proportion of type $0$ predators 
\begin{equation}
\label{def:alpha_t}
\alpha(t):=\frac{h_0(t)}{h_0(t)+h_1(t)}.
\end{equation}
The dynamics of this proportion are
\begin{equation}
\label{eq:alpha_t}
\frac{d\alpha(t)}{dt} = \rho n_0(t) \alpha(t)(1-\alpha(t))\, .
\end{equation}
\begin{prop}
\label{prop:syst-3}
Assume $\beta-\delta>0$.
\begin{enumerate}[label=\roman*)]
\item The Jacobian matrix at the equilibrium $(0, \alpha \bar{h}, (1-\alpha)\bar{h})$ admits two negative eigenvalues and a null eigenvalue if and only if $\alpha> \alpha_c$, where $\alpha_c$ is the critical proportion of type 0 predators below which the type 0 prey can invade: 
\begin{equation}
\label{eq:alpha_c_first}
	\alpha_c := \sup\{\alpha: b-d-p\alpha\bar{h} > 0\}= \frac{c \bar{n}}{p\bar{h}} \, .
\end{equation}
\item Assume $\hat{n}<0$. Let us consider the solution of \eqref{eq:syst-3} with initial condition $$(n_{0,0}, \alpha_0 \bar{h}, (1-\alpha_0)\bar{h})$$ such that $n_{0,0}>0$ and $\alpha_0<\alpha_c$. Then the solution converges as $t\to\infty$ to $(0, \alpha_\infty\bar{h}, (1-\alpha_\infty)\bar{h})$, with  $\alpha_\infty>\alpha_c$.\\
Moreover the limiting proportion $\alpha_\infty$ can be written as a function $\kappa(n_{0,0},\alpha_0)$ of the initial conditions which satisfies that
\begin{equation}\label{def:kappa_0}
\kappa_0(\alpha_0):=\lim_{n_{0,0}\to0} \kappa(n_{0,0}, \alpha_0),
\end{equation} 
exists and $ \kappa_0(\alpha_0)>\alpha_c$ for $\alpha_0 \neq \alpha_c$ and $\kappa_0(\alpha_c) = \alpha_c$.
\item If furthermore $n^*\le0$, or equivalently $\alpha_c<1/2$, we define the function $f:[0,1]\to\R_+$ by 
\begin{equation}
    \label{def:f}
    f(\alpha) := -\alpha_c \log(\alpha) -(1-\alpha_c) \log (1-\alpha)
,
\end{equation} and the positive constant $M = (1-2\alpha_c)\log((1-\alpha_c)/\alpha_c)$. For any $\alpha_0<\alpha_c$, the limiting proportion $\kappa_0(\alpha_0)$ satisfies 
$$f(\kappa_0(\alpha_0))\le f(1-\alpha_0)-M $$ and in particular $ \kappa_0(\alpha_0)<1-\alpha_0$.
\end{enumerate}
\end{prop}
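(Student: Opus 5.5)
The plan is to treat the three parts in order. Part i) is a direct computation, part ii) uses the monotonicity of $\alpha(t)$ given by \eqref{eq:alpha_t}, and the description of $\kappa_0$ in parts ii)–iii) comes from a first integral of a reduced two-dimensional system obtained in the limit $n_{0,0}\to0$.

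\textbf{Part i).} Linearise \eqref{eq:syst-3} at $(0,\alpha\bar h,(1-\alpha)\bar h)$. Since $n_0=0$ there, the $n_0$-row of the Jacobian has the single nonzero entry $b-d-p\alpha\bar h$ on the diagonal, so the matrix is block triangular. The predator block is $-\bar h\begin{pmatrix}\alpha\gamma&\alpha\gamma\\(1-\alpha)\gamma&(1-\alpha)\gamma\end{pmatrix}$, which has eigenvalues $-\gamma\bar h$ and $0$. The spectrum is therefore $\{b-d-p\alpha\bar h,\,-\gamma\bar h,\,0\}$. The first eigenvalue is negative iff $\alpha>(b-d)/(p\bar h)=c\bar n/(p\bar h)=\alpha_c$.

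\textbf{Part ii), convergence.} By \eqref{eq:alpha_t}, $\alpha$ is nondecreasing, so it converges to some $\alpha_\infty$. Moreover
\[
\frac{d}{dt}\log\frac{\alpha}{1-\alpha}=\rho n_0 ,
\]
so $\int_0^\infty n_0\,dt<\infty$ provided $\alpha_\infty<1$. To see $\alpha_\infty<1$: once $\alpha>\alpha_c$ and $H=h_0+h_1\ge \bar h-o(1)$, the quantity $\log n_0$ decreases at a linear rate, which forces $\int n_0<\infty$. For $H$ we use
\[
\dot H=H(\beta-\delta-\gamma H)+\rho n_0h_0 \ge H(\beta-\delta-\gamma H),
\]
which gives $H\ge \bar h$ whenever $H(0)=\bar h$. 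Since $n_0$ is bounded with bounded derivative and integrable, $n_0\to0$, and then $H\to\bar h$. If $\alpha_\infty<\alpha_c$, the growth rate $b-d-cn_0-p\alpha H$ of $n_0$ would eventually stay above a positive constant, contradicting $n_0\to0$. So $\alpha_\infty\ge\alpha_c$.

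\textbf{Main obstacle.} The hardest step is excluding $\alpha_\infty=\alpha_c$ for fixed $n_{0,0}>0$. The surplus $H-\bar h$ is of the order of $\int e^{-\gamma\bar h(t-s)}n_0(s)\,ds$, and this adds mortality to the prey. I would first try to show that, if $\alpha$ stayed below $\alpha_c$, $\log n_0$ would be bounded below: the integrated deficit $\int (cn_0+p\alpha(H-\bar h))\,dt$ is controlled by $\int n_0<\infty$. That would contradict $n_0\to0$. The borderline case where $\alpha$ crosses $\alpha_c$ only asymptotically needs a quantitative version of the same estimate. The dependence of $\alpha_\infty=\kappa(n_{0,0},\alpha_0)$ on the data is then simply the definition of $\kappa$.

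\textbf{Limit $n_{0,0}\to0$ and part iii).} When $n_0$ is small, $H=\bar h+O(n_0)$ and $cn_0=O(n_0)$, so to leading order
\[
\frac{d\log n_0}{dt}=p\bar h(\alpha_c-\alpha),\qquad \frac{d\alpha}{dt}=\rho n_0\alpha(1-\alpha).
\]
This reduced system has the exact first integral
\[
\frac{\rho}{p\bar h}\,n_0=f(\alpha_0)-f(\alpha),
\]
because $(\alpha_c-s)/(s(1-s))=\alpha_c/s-(1-\alpha_c)/(1-s)=-f'(s)$. The prey returns to $0$ exactly when $f(\alpha)=f(\alpha_0)$. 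Since $f$ is strictly convex with minimum at $\alpha_c$, we get $\kappa_0(\alpha_0)$ as the unique root of $f(\kappa_0)=f(\alpha_0)$ on the other side of $\alpha_c$. This root is $>\alpha_c$ if $\alpha_0\neq\alpha_c$, and $\kappa_0(\alpha_c)=\alpha_c$.

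To justify that the limit exists and equals this root, I would rescale $n_0=n_{0,0}u$ and note that the time spent with $n_0$ not small only matters on compact $\alpha$-windows. On such windows the perturbation terms are $O(n_{0,0})$, so Gronwall's lemma on the $(\log n_0,\alpha)$ system gives convergence of the orbit. The initial and final phases of duration $O(\log(1/n_{0,0}))$ change $\alpha$ by only $O(n_{0,0}\log(1/n_{0,0}))$.

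For part iii), a direct computation gives
\[
f(1-\alpha)-f(\alpha)=(1-2\alpha_c)\log\frac{1-\alpha}{\alpha},
\]
which for $\alpha_0<\alpha_c<1/2$ is at least $M$. Hence $f(\kappa_0(\alpha_0))=f(\alpha_0)\le f(1-\alpha_0)-M$. Since $1-\alpha_0>1-\alpha_c>\alpha_c$ and $f$ is increasing on $(\alpha_c,1)$, it follows that $\kappa_0(\alpha_0)<1-\alpha_0$.
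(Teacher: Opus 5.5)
Your part i) is correct, and so is your convergence argument in part ii). Your exclusion of $\alpha_\infty=\alpha_c$ is already complete, and it differs from the paper, which uses the sign of $d^2n_0/d\alpha^2$ at the endpoint of the phase curve. Since $n_0>0$ for all times, $\alpha$ is strictly increasing, so if it ever reaches $\alpha_c$ it ends strictly above it. If instead it stays below $\alpha_c$, then $\int_0^\infty n_0\,dt<\infty$ and $\int_0^\infty (H-\bar h)\,dt\le C\int_0^\infty n_0\,dt$, so $\log n_0$ is bounded below, contradicting $n_0\to0$. There is no separate borderline case.

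The genuine gap is the limit $n_{0,0}\to0$: your reduced system is not that limit. A small initial prey density does not keep $n_0$ small along the orbit. For $\alpha$ to move from $\alpha_0$ past $\alpha_c$ you need $\rho\int n_0\,dt=\log\frac{\alpha_\infty}{1-\alpha_\infty}-\log\frac{\alpha_0}{1-\alpha_0}$ of order one. But while $n_0$ is small and $\alpha$ is still near $\alpha_0<\alpha_c$, the prey grows exponentially. So $n_0$ exceeds a threshold independent of $n_{0,0}$ before $\alpha$ moves appreciably; your own first integral predicts a peak of size $\frac{p\bar h}{\rho}(f(\alpha_0)-f(\alpha_c))$. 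On this excursion the dropped terms $cn_0$ and $p\alpha(H-\bar h)$ are of order one, not $O(n_{0,0})$. Exactly, for the full system
\[
\frac{d}{dt}\Bigl[n_0+\frac{p\bar h}{\rho}f(\alpha)\Bigr]=-n_0\bigl(cn_0+p\alpha(H-\bar h)\bigr)\le 0 ,
\]
and the total dissipation is at least $c\int_0^\infty n_0^2\,dt$, which stays bounded away from $0$ as $n_{0,0}\to0$. Hence $f(\kappa_0(\alpha_0))<f(\alpha_0)$ strictly. The identity $f(\kappa_0(\alpha_0))=f(\alpha_0)$ is therefore false, and it is what you use to get existence and the characterisation of $\kappa_0$, and part iii). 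Your Gronwall justification (perturbation terms $O(n_{0,0})$ on compact $\alpha$-windows) fails for the same reason.

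The repair is the paper's route: keep the dropped terms and use their sign. Since $cn_0\ge0$ and $H\ge\bar h$ (which you proved), integrating the display over $[0,\infty)$ gives, for each fixed $n_{0,0}>0$,
\[
f\bigl(\kappa(n_{0,0},\alpha_0)\bigr)\le f(\alpha_0)+\frac{\rho}{p\bar h}\,n_{0,0}.
\]
This is equivalent to integrating $\frac{dn_0}{d\alpha}\le\frac{p\bar h}{\rho}\frac{\alpha_c-\alpha}{\alpha(1-\alpha)}$. Letting $n_{0,0}\to0$ yields $f(\kappa_0(\alpha_0))\le f(\alpha_0)<f(1-\alpha_0)-M$, which is what part iii) needs. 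Together with $\kappa>\alpha_c$ and the strict minimum of $f$ at $\alpha_c$, it also gives $\kappa_0(\alpha_c)=\alpha_c$.

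A one-sided bound cannot by itself give existence of the limit, or $\kappa_0(\alpha_0)>\alpha_c$ for $\alpha_0<\alpha_c$. For that, compare not with the reduced system but with the orbit of the full nonlinear system that leaves $(0,\alpha_0\bar h,(1-\alpha_0)\bar h)$ along its unstable direction; your integrability argument then applies to that orbit. The paper itself only establishes the one-sided bound here and does not separately prove that the limit exists.
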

\noindent
The proof and detailed construction of the function $\kappa$ is given in Section~\ref{proof:PredProportion}.\smallskip

As expected, the null eigenvalue in point $i)$ is associated with the eigenvector $(0,1,-1)$ corresponding to the line of equilibria. In point $ii)$, the matching equilibrium does not exist and any invasion attempt by a prey will fail as the proportion of matching predators will eventually overcome the critical proportion $\alpha_c$.
We will see below that this regime will lead to successive invasion attempts by each prey.
When $\alpha_c > 1/2$, which corresponds to the case when the four-species coexistence equilibrium exists and is locally stable, these invasions will take place more and more rapidly and eventually accumulate.
When $\alpha_c < 1/2$, however, we will see that, after a finite number of failed prey invasions, the proportion of each predator type will be in $[\alpha_c, 1-\alpha_c]$, at which point both prey populations go extinct.

\section{Limit at the mutation scale}

\subsection{Notations associated to the deterministic system}
Consider the dynamical system \eqref{eq:syst-4}, and let $ \mathcal{E} \subset \R_+^4 $ be the set of  non negative fixed points of this system,
\begin{equation*}
	\mathcal{E}: = \left\lbrace z = (n_0, n_1, h_0, h_1) \in \R_+^4 : \quad \begin{aligned}
		&n_i (b-d - c(n_0 + n_1) - p h_i) = 0, \\
		&h_i (\beta - \delta - \gamma(h_0 + h_1) + \rho n_i) = 0,
	\end{aligned} \quad i \in \lbrace 0, 1 \rbrace \right\rbrace.
\end{equation*}
We then define, for $ z \in \mathcal{E} $ and $ i \in \lbrace 1, 2, 3, 4 \rbrace $,
\begin{equation*}
	\Psi_i(z) := \lim_{\varepsilon \downarrow 0} \lim_{t \to \infty} \bm{z}^{(z + \varepsilon e_i)}(t),
\end{equation*}
where $ e_i $ is the $ i $-th vector of the canonical basis of $ \R^4 $.
That is, $ \Psi_i(z) $ is the equilibrium of the system after an arbitrarily small quantity of the $ i $-th population is introduced at the beginning, starting from state $ z $.
By definition, $ \Psi_i(z) \in \mathcal{E} $ for any $ z \in \mathcal{E} $ and $ i \in \lbrace 1, 2, 3, 4 \rbrace $.
Note also that, if $ z $ is locally stable, then $ \Psi_i(z) = z $ for any $ i $.

We can then reformulate some results from the previous section in terms of the function $\Psi_i$. 
Let us note that the set of all possible positive equilibria depends on the parameters (see Figure~\ref{fig:cases}). 
We can nonetheless identify a maximal set:
\begin{multline*}
    \mathcal{E} \subset \{(0,0,0,0) , (\bar{n},0,0,\bar{h}), (0,\bar{n},\bar{h},0), (\hat{n},0,\hat{h},0), (0,\hat{n},0,\hat{h}), (n^\ast,n^\ast,h^\ast,h^\ast),\\
    (a \bar{n}, (1-a)\bar{n},0,0), a \in [0,1], (0,0,\alpha \bar{h},(1-\alpha)\bar{h}), \alpha\in[0,1]\}\, .
\end{multline*}
When $\hat{n}>0$ and $\bar{n}>0$ (case A below), we will mostly study perturbations of the matching and non-matching equilibria. For the non-matching equilibrium we have
\begin{equation*}
    \Psi_i(\bar{n},0,0,\bar{h}) = \begin{cases}
    (\hat{n},0,\hat{h},0) & \text{ if } i = 3, \\
    (\bar{n},0,0,\bar{h}) & \text{ otherwise,}
    \end{cases}
\end{equation*}
which translates the fact that in a non-matching predator-prey equilibrium, the matching predator can replace the non-matching one.
Similarly, for the matching types predator-prey equilibrium, 
\begin{equation*}
    \Psi_{i}(\hat{n},0,\hat{h},0) = \begin{cases}
    (0,\bar{n},\bar{h},0) & \text{ if } i = 2, \\
    (\hat{n},0,\hat{h},0) & \text{ otherwise,}
    \end{cases}
\end{equation*}
which corresponds to the invasion of the non-matching prey.\\
Finally, when $\hat{n}<0$, the results of Proposition \ref{prop:syst-3} can be written as
\begin{equation*}
    \Psi_1(0,0,\alpha_0\bar{h}, (1-\alpha_0)\bar{h})=(0,0,\alpha_{\infty}\bar{h}, (1-\alpha_\infty)\bar{h}),
\end{equation*}
where 
\begin{equation*}
    \alpha_\infty = \begin{cases}
    \alpha_0 & \text{ if  } \alpha_0 > \alpha_c,\\
    \kappa_0(\alpha_0) & \text{ otherwise,}
    \end{cases}
\end{equation*}
where the mapping $\kappa_0$ is defined in Proposition~\ref{prop:syst-3}.

\subsection{Definition of the limiting process}

Our goal is to study the successive invasions of prey and predators that might happen due to mutations. To do so, we extend the reasoning developed in \cite{bovier_crossing_2019,champagnat_stochastic_2021,coquille_stochastic_2021} and derive the limiting behaviour (as $K \to \infty$) of the population processes considered on time scales of the order of $\log(K)$.
Let us recall an important result on the dynamics on the $\log(K)$ scale of branching processes with immigration. We borrow the notations of \cite{champagnat_stochastic_2021}. 
\begin{defi}
    \label{def:BPI}
We will say that $(A^K_t)_{t\ge0}$ is a linear branching processes with immigration  $BPI_K(b,d,a,c,f,\beta)$ if $A^K_0=\lfloor K^\beta \rfloor$, the birth rate is $b\ge0$, the death rate is $d\ge0$ and the immigration rate at time $s\ge0$ is $fK^c e^{as}$.
\end{defi} 
We recall below the main result that will be useful for our proofs, which specifies the dynamics of this process on the $\log(K)$ time scale. This result is based on a precise study of the exponential martingale associated with branching processes.
\begin{theo}[Theorem B.1 in \cite{champagnat_stochastic_2021}]
\label{theo:convBPI}
Let $(A^K_t)_{t\ge0}$ be a $BPI_K(b,d,a,c,f,\beta)$ and denote by $r=b-d$ its growth rate.
Assume $c\le\beta$ and $\beta>0$. Then for all $T>0$ such that 
$$\inf_{t\in[0,T]} (\beta+rt)\vee (c+at) >0,$$
then the process $\left(  \frac{\log(1+A^K_{s\log(K)})}{\log(K)},0\le s\le T\right)$ converges to $\left( (\beta+rs)\vee (c+as), 0\le s \le T  \right)$ when $K \to \infty$ in probability in $L^\infty ([0,T])$. 
\end{theo}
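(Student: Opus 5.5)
The proof splits $A^K$ into two independent parts and treats each by first and second moments. Write $A^K = X^K + Y^K$, where $X^K$ is the descendance of the $\lfloor K^\beta\rfloor$ initial individuals: a pure linear birth–death process with rates $b,d$. The term $Y^K$ is the descendance of the immigrants, who arrive as a Poisson process of intensity $fK^c e^{as}$, each founding an independent birth–death process. The aim is to show that on the time scale $t = s\log K$:
\begin{itemize}
\item $X^K$ is of order $K^{\beta+rs}$;
\item $Y^K$ is of order $K^{c+as}$ when $a>r$, and at most $K^{c+rs+o(1)}\le K^{\beta+rs+o(1)}$ otherwise, using $c\le\beta$.
\end{itemize}
Then $\log(1+A^K)/\log K$ is governed by the maximum of the exponents. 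The condition $\inf_{t\le T}(\beta+rt)\vee(c+at)>0$ guarantees that the dominant term is a positive power of $K$ throughout $[0,T]$, so relative fluctuations vanish.

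\textbf{Moments.} First, $\E[X^K_t]=\lfloor K^\beta\rfloor e^{rt}$. The process $e^{-rt}X^K_t$ is a martingale with $\mathrm{Var}(e^{-rt}X^K_t)\le C K^\beta\int_0^t e^{-ru}\,du$, a standard computation for linear birth–death processes. Second, by Campbell's formula,
\[
\E[Y^K_t]=fK^c\int_0^t e^{au}e^{r(t-u)}\,du ,
\]
and $\mathrm{Var}(Y^K_t)$ is bounded by $fK^c\int_0^t e^{au}\,\E[Z_{t-u}^2]\,du$, where $Z$ is a birth–death process started from one individual. Plugging in $t=s\log K$, both means are $K^{\beta+rs}$ and $K^{(c+as)\vee(c+rs)+o(1)}$ respectively. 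In every case the variance is at most the mean times a factor $K^{o(1)}$, or times $e^{|r|t}$ in the subcritical case; on the set where the corresponding exponent is positive this is $o(\text{mean}^2)$.

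\textbf{Upper and lower bounds.} For the upper bound, Markov's inequality applied to $\E[A^K_t]$ gives $\log(1+A^K_t)/\log K\le(\beta+rs)\vee(c+as)+\eps$ at each fixed $s$ with high probability. For the lower bound, Chebyshev's inequality applied to whichever component dominates at time $s$ gives the reverse inequality $\ge(\beta+rs)\vee(c+as)-\eps$; this is where the positivity of the limit is needed. The case $r\le0$ with $c+as$ dominating, where immigrant lineages die fast, is handled by the same Poisson variance bound: the immigrant mean $K^{c+as}/(a-r)$ tends to infinity.

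\textbf{Uniformity in $s$.} To upgrade to convergence in $L^\infty([0,T])$, I will take a grid $0=s_0<\dots<s_N=T$ with mesh $\eta$. Between grid points, on the window $[s_k\log K, s_{k+1}\log K]$, I control oscillations by Doob's maximal inequality applied to the martingales $e^{-rt}X^K_t$ and $e^{-rt}Y^K_t-\int_0^t fK^c e^{(a-r)u}\,du$. This shows $A^K$ cannot deviate by more than a factor $K^{O(\eta)}$ within the window, with high probability. Since the limit function is Lipschitz, letting $K\to\infty$ and then $\eta\to0$ concludes the argument.

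The main obstacle is the lower bound near times where the dominant exponent is small, or where the two exponents cross. There the relevant population is only $K^{\eps}$ and the second-moment bound must be checked carefully to stay $o(\text{mean}^2)$ uniformly on the window. I would first try a fixed margin $\eps$ coming from the infimum assumption and absorb all factors $e^{|r|t}=K^{|r|s}$ into the available gap.
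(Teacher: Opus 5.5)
The paper does not prove this statement. It imports Theorem~B.1 of \cite{champagnat_stochastic_2021} and remarks only that it rests on a precise study of the exponential martingale of the branching process.

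Your argument is a self-contained and more elementary alternative, and the route works. You split $A^K=X^K+Y^K$ into the founders' descendants and the immigrant lineages, which are independent of the founders and attached to a Poisson process. You then use exact first and second moments and apply Markov and Chebyshev at the points of a fixed grid. Each window $[s_k\log K, s_{k+1}\log K]$ is controlled by Doob's inequality for $e^{-rt}X^K_t$ and $e^{-rt}Y^K_t-\int_0^t fK^c e^{(a-r)u}\,du$. For the upper bound, the $L^1$ maximal inequality for the nonnegative submartingale $e^{-rt}A^K_t$ is the quickest tool.

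This yields only error probabilities of order $K^{-\delta}$, weaker than the concentration an exponential-martingale argument provides. That is all convergence in probability needs here. The margin $\inf_{t\le T}(\beta+rt)\vee(c+at)>0$ turns every relevant relative variance into a negative power of $K$, and there are only finitely many windows.

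Two points of your moment bookkeeping should be corrected, since they are exactly where the lower bound lives.

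\textbf{First: the variance of $X^K_t$.} For $X^K_t$ itself you have the regimes reversed. The exact formula is $\mathrm{Var}(X^K_t)=\lfloor K^\beta\rfloor\frac{b+d}{r}e^{rt}(e^{rt}-1)$.
\begin{itemize}
\item When $r<0$ it is at most $\frac{b+d}{|r|}\E[X^K_t]$, so the relative variance is $O(K^{-(\beta+rs)})$.
\item When $r>0$ it is of order $e^{rt}\,\E[X^K_t]$, so the relative variance is $O(K^{-\beta})$.
\end{itemize}
Similarly, $\mathrm{Var}(Y^K_t)\le C\,\E[Y^K_t]$ when $r<0$. The factor $e^{|r|t}$ you mention belongs to $\mathrm{Var}(e^{-rt}X^K_t)$, measured against its mean $\lfloor K^\beta\rfloor$. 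If you literally used $\mathrm{Var}(X^K_t)\le e^{|r|t}\E[X^K_t]$, Chebyshev would only close for $\beta>2|r|s$, and no fixed margin can absorb $K^{|r|s}$. With the exact formula, your ``main obstacle'' disappears.

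\textbf{Second: immigrants in the supercritical case.} For $r>0$ and $a>r$, positivity of $c+as$ is not enough. The hypotheses allow $c<0$, and for $r<a<2r$ one finds $\mathrm{Var}(Y^K_t)/\E[Y^K_t]^2\asymp K^{-(c+2(a-r)s)}$. This diverges for some $s$ with $c+as>0$, because the variance is carried by rare early immigrants.

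The general bound is $\mathrm{Var}(Y^K_t)/\E[Y^K_t]^2\le C/I_K(t)$, with $I_K(t)=fK^c\int_0^t e^{(a-r)u}\,du\approx K^{c+(a-r)s}$, so you need $c+(a-r)s>0$. This holds wherever $Y^K$ is the dominant component, because $c+as>\beta+rs$ gives $c+(a-r)s>\beta>0$. The same inequality controls the Doob step for $Y^K$. At a crossing time $s^\ast$ it gives $c+(a-r)s^\ast=\beta$, so both lower bounds are available near $s^\ast$. State this, rather than positivity of the limit alone, as the justification for applying Chebyshev to the dominating component.
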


Let us introduce the logarithmic exponents 
\begin{equation}
\label{def:XY}
X^K_i(t) := \frac{\log(1 + \prey_i(t))}{\log(K)}\,, \qquad Y_i^K(t) := \frac{\log(1+\pred_i(t))}{m \log(K)}, \quad  i\in\{0,1\}.
\end{equation}
This means that $X^K_i(t) \geq 0$ and $Y^K_i(t) \geq 0$ are such that
\begin{align*}
    \prey_i(t) = K^{X^K_i(t)} - 1, && \pred_i(t) ={ K^{mY^K_i(t)}} - 1.
\end{align*}
Also recall the definition of the vector of scaled population sizes $ \bm{Z}^K_{sc}(t) $ in Eq.~\eqref{def:Zsc}.
\medskip

We now prepare to state the convergence of the population exponents
\begin{equation*}
	(X^K_0(t \log(K)), X^K_1(t \log(K)), Y^K_0(t \log(K)), Y^K_1(t \log(K)))_{t\ge0}
\end{equation*}
to a deterministic limit $(x_0(t), x_1(t), y_0(t), y_1(t)) $, which we define recursively on successive time intervals corresponding to the asymptotic intervals between invasions of different populations (prey or predators).
These successive invasions will lead to different intermediate equilibria depending on the parameters of the birth and death events, and these equilibria will determine the subsequent evolution of the limiting exponents $ (x_0, x_1, y_0, y_1) $.

Before stating our convergence result, let us define the limiting exponents and the intermediate equilibria as follows.

\begin{defi} \label{def:limit}
	Given $ (x_0(0), x_1(0), y_0(0), y_1(0)) \in [0,1]^4 $ and $ \widetilde{\bm{z}}_0 = (\widetilde{n}_{0}^{(0)}, \widetilde{n}_{1}^{(0)}, \widetilde{h}_{0}^{(0)}, \widetilde{h}_{1}^{(0)}) \in \mathcal{E} $ such that
	\begin{equation} \label{condition_initial_exponents}
		x_i(0) < 1 \iff \widetilde{n}_{0,i} = 0, \quad \text{ and } \quad y_i(0) < 1 \iff \widetilde{h}_{0,i} = 0.
	\end{equation}
	We define $ (x_0(t), x_1(t), y_0(t), y_1(t), t \in [0, \tau_*)) $ and two sequences $ (s_k, k \geq 0) $, $ (\widetilde{\bm{z}}_k, k \geq 0) $ as follows, where $ s_k \in \R_+ \cup \lbrace + \infty \rbrace $ is an increasing sequence of times and $ \widetilde{\bm{z}}_k \in \mathcal{E} $ for all $ k \geq 0 $ such that $ s_k < \infty $.
	Suppose that the two sequences have been defined up to $ s_k < \infty $ and that $ \widetilde{\bm{z}}_k = (\widetilde{n}_{0}, \widetilde{n}_{1}, \widetilde{h}_{0}, \widetilde{h}_{1}) $.
	Then, for $ t \in [s_k, s_{k+1} \wedge \tau_*) $, we set
	\begin{align*}
		x_i(t) &= \sup_{s \in [s_k, t] : x_i(s) > 0} \lbrace x_i(s) + r_{i}(\widetilde{\bm{z}}_k) (t-s) \rbrace \vee (x_{1-i}(t) - v) \vee 0, \\
		y_i(t) &= \sup_{s \in [s_k, t] : y_i(s) > 0} \lbrace y_i(s) + \varrho_i(\widetilde{\bm{z}}_k) (t-s) \rbrace \vee (y_{1-i}(t) - \vartheta) \vee 0,
	\end{align*}
	where the slopes are defined by	
\begin{equation}
\label{eq:slopes}
r_i(\widetilde{\bm{z}}) := b-d - c(\widetilde{n}_0 + \widetilde{n}_1) - p \widetilde{h}_i \quad \text{ and } \quad
\varrho_i(\widetilde{\bm{z}}) := \frac{1}{m} (\beta-\delta - \gamma(\widetilde{h}_0 + \widetilde{h}_1) + \rho \widetilde{n}_i).
\end{equation}
	We then define $ s_{k+1} $ as the first time following $ s_k $ at which one exponent reaches 1 with a positive slope (which corresponds to an invasion of the type in question), and $ \widetilde{\bm{z}}_{k+1} $ is the new equilibrium following this invasion,
	\begin{align*}
s_{k+1} &= \inf \left\lbrace t > s_k : \exists i \in \lbrace 0, 1 \rbrace : x_i(t) \geq 1 \text{ and } r_i(\widetilde{\bm{z}}_k) > 0 \text{ or } y_i(t) \geq 1 \text{ and } \varrho_i(\widetilde{\bm{z}}_k) > 0 \right\rbrace, \\
\widetilde{\bm{z}}_{k+1} &= \Psi_j(\widetilde{\bm{z}}_k),
	\end{align*}
	where $ j \in \lbrace 1, 2, 3, 4 \rbrace $ corresponds to the type invading at time $ s_{k+1} $ (the one whose exponent reaches 1 with a positive slope).
	If $ s_k = \infty $, we then set $ s_j = \infty $ and $ \widetilde{\bm{z}}_{j} = \widetilde{\bm{z}}_k $ for all $ j \geq k $.\medskip

\noindent We stop the construction at $ t = \tau_* $ if one of the following happens:
	\begin{itemize}
		\item two exponents reach 1 with a positive slope at the same time,
		\item one exponent reaches 1 with a positive slope at the same time as another reaches 0.
	\end{itemize}
	If none of the above ever takes place, we set $ \tau_* =  \lim_{k \to \infty} s_k \in (0, \infty] $. 

	We can then also define $ (\widetilde{\bm{z}}(t), t \in [0, \tau_*)) $ as
	\begin{equation*}
		\widetilde{\bm{z}}(t) = 
			\widetilde{\bm{z}}_k  \text{ if } t \in [s_k, s_{k+1}), 
	\end{equation*}
\end{defi}

We note that all the types for which $ \widetilde{\bm{z}}^{(i)} > 0 $ necessarily have a slope ($ r_i(\widetilde{\bm{z}}) $ or $ \varrho_i(\widetilde{\bm{z}}) $) equal to zero, and, just after a new invasion, all the types with an exponent equal to 1 have a slope either equal to zero (if they are present in the new intermediate equilibrium), or negative (if they are not).
Hence all the exponents are necessarily bounded from above by 1.

\subsection{Main result}
We are now ready to state our main results on the convergence of the logarithmic exponents at the $\log(K)$ scale.
Let us specify our initial conditions. 

\paragraph{Assumptions on the initial conditions.} Assume that $ (\bm{Z}^K_{sc}(0))_{K\ge1} $ converges in probability to some $ \widetilde{\bm{z}}_0 \in \mathcal{E} $. We ask furthermore that $\widetilde{\bm{z}}_0$ corresponds either a matching type or a non-matching type equilibrium depending on the choice of the parameters. We additionally assume that $ (X^K_i(0), Y^K_i(0), i \in \lbrace 0, 1 \rbrace)_{K\ge1} $ converges in probability to $ (x_i(0), y_i(0), i \in \lbrace 0, 1 \rbrace) $, for $ x_i(0) \in [0,1] $, $ y_i(0) \in [0,1] $, satisfying \eqref{condition_initial_exponents}.
\begin{theo} \label{thm:main_result}
	Assume the above assumptions on initial conditions.
	Let $ (x_i(t), y_i(t), i \in \lbrace 0, 1 \rbrace, t \in [0,\tau_*)) $ and $ (\widetilde{\bm{z}}(t), t \in [0,\tau_*)) $ be given by Definition~\ref{def:limit}.
	Then, for all $ T \in (0, \tau_*) $,
	\begin{equation*}
		\Bigl( X^K_0\big(t \log(K)\big), X^K_1\big(t \log(K)\big), Y^K_0\big(t \log(K)\big), Y^K_1\big(t \log(K)\big), t \in [0,T] \Bigr)
	\end{equation*}
	converges as $ K \to \infty $ to $ (x_0(t), x_1(t), y_0(t), y_1(t), t \in [0, T]) $ in probability, uniformly on $ [0,T] $.
	Moreover, for any $ t \in [0,T] \setminus (\lbrace s_k, k \geq 0 \rbrace ) $, $ \bm{Z}^K_{sc}(t) \to \widetilde{\bm{z}}(t) $ in probability as $ K \to \infty $.
\end{theo}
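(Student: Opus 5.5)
The proof goes by induction on the phases $[s_k,s_{k+1})$ of Definition~\ref{def:limit}, in the spirit of \cite{champagnat_stochastic_2021,coquille_stochastic_2021}. Since $T<\tau_*$, only finitely many invasion times $s_1<\dots<s_n$ lie in $[0,T]$. We therefore show that if the conclusion holds at $s_k$ (exponents converge and $\bm{Z}^K_{sc}$ is close to $\widetilde{\bm{z}}_k$), then it holds on $[s_k,s_{k+1}]$. Each phase splits into two parts.

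\emph{Step 1: the resident stays near $\widetilde{\bm{z}}_k$ while the other types are microscopic.} The resident types are those with $\widetilde{\bm{z}}_k^{(i)}>0$. We first show that, with probability tending to one, they stay in an $\eps$-neighbourhood of $\widetilde{\bm{z}}_k$ up to time $(s_{k+1}-\eta)\log K$, as long as the non-resident populations stay below $\eps K$ (resp. $\eps K^m$). The tool is a large-deviation/exit-time estimate (Freidlin–Wentzell type, as in Champagnat's \emph{Theorem 3(c)}) for the resident subsystem. This requires $\widetilde{\bm{z}}_k$ to be locally attracting within the resident subspace, which Propositions~\ref{prop:non-matching}, \ref{prop:cv_sys_matching} and \ref{prop:syst-3} provide: the Jacobian on the resident coordinates has negative eigenvalues, and on the line of predator equilibria we track the neutral direction via \eqref{eq:alpha_t}, which barely moves because $n_0$ is microscopic. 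Exit from a neighbourhood then takes exponentially long in $K$ (or in $K^m$), much longer than $\log K$. The microscopic population of type $i$ then has birth and death rates within $O(\eps)$ of those of a branching process with growth rate $r_i(\widetilde{\bm{z}}_k)$ (resp. $m\varrho_i$, after rescaling by $m$ for predators). Its immigration comes from mutations of the other type, at rate $K^{X_{1-i}-v}$. We couple it from above and below with $BPI_K$ processes having rates $r_i\pm C\eps$, and apply Theorem~\ref{theo:convBPI} piecewise in time (restarting at times where the source exponent changes slope). Letting $\eps\to0$ yields exactly the sup-formula defining $x_i,y_i$, including the term $(x_{1-i}-v)\vee 0$ and extinction when an exponent hits $0$. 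Because $\tau_*$ excludes simultaneous events, the slopes change at isolated times, so these comparisons can be concatenated.

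\emph{Step 2: the invasion.} At $s_{k+1}$ a single type reaches exponent $1$ with positive slope. By Step~1 it reaches $\eps K$ (or $\eps K^m$) at a time $\log K(s_{k+1}+o(1))$, while every other non-resident is at size $K^{1-\delta'}$ for some $\delta'>0$. From that moment Lemma~\ref{lemapprox} applies on a time window of length $O(1)$. The system follows $\bm{z}^{(\widetilde{\bm{z}}_k+\eps e_j)}$, which by the definition of $\Psi_j$ and the convergence propositions (\ref{prop:cv_sys_matching}, \ref{prop:cv_3types}, \ref{prop:syst-3-matching}, \ref{prop:syst-3}) enters any neighbourhood of $\Psi_j(\widetilde{\bm{z}}_k)$ in finite time $T_\eps$, and then Step~1 restarts. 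Since this window is $o(\log K)$, the exponents move by $o(1)$ during it, and populations that become non-resident re-enter the branching regime starting from exponent $1$. Continuity in $\eps\to0$ of the deterministic flow near the invasion point is needed; this is where the limit $\kappa_0$ of Proposition~\ref{prop:syst-3}~ii) enters.

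\emph{Main obstacle.} The delicate point is Step~2 combined with the $\eps$-limits. The deterministic phase starts from a genuinely small invader ($\eps\to0$), so we need the trajectory from $\widetilde{\bm{z}}_k+\eps e_j$ to converge, uniformly enough, to a limit equilibrium. In case $\hat n<0$ the equilibrium lies on a non-isolated line, so we also need that the non-invading microscopic populations (at size $K^{1-\delta'}$) do not perturb the limit proportion $\alpha_\infty$. I would handle this by writing the dynamics in terms of $\alpha(t)$ and using \eqref{eq:alpha_t}: the total prey integral $\int n_0$ determines $\alpha_\infty$, and it depends continuously on the initial data, so perturbations of order $K^{-\delta'}$ change $\alpha_\infty$ by $o(1)$. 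Finally, the convergence $\bm{Z}^K_{sc}(t)\to\widetilde{\bm{z}}(t)$ off $\{s_k\}$ follows directly from Step~1.
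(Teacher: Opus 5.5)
Your overall architecture matches the paper's: phase-by-phase induction, then resident stability plus BPI sandwiching via Theorem~\ref{theo:convBPI} with recursive control of the mutational immigration, then an $O(1)$ Ethier--Kurtz window in which the flow carries the system to $\Psi_j(\widetilde{\bm z}_k)$. However, Step~1 has a genuine gap in cases B--E, where the resident state lies on a line of equilibria such as $(0,0,\alpha\bar h,(1-\alpha)\bar h)$. There the resident subsystem is not locally attracting in the neutral direction, so no exponential exit-time estimate holds. Your substitute, that the proportion ``barely moves because $n_0$ is microscopic'', does not close the argument.

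In the stochastic system, $H^K_0/H^K$ has drift $\rho\,\frac{H^K_0H^K_1(N^K_0-N^K_1)}{K\,H^K(H^K+1)}$ plus a martingale part. A prey invasion phase lasts a time of order $\log K$, and during it both prey are only bounded by $\varepsilon K$: one grows up to $\varepsilon K$, the other decays from a size of order $\varepsilon K$. The naive bound on the cumulative drift is therefore $O(\varepsilon\log K)$, which is not small. What you need is $\int_0^{\tau}(N^K_0+N^K_1)(s)/K\,ds=O(\varepsilon)$, which comes from the exponential growth or decay of the prey. But those growth rates are only known once the proportion is controlled, so the argument must be a bootstrap with stopping times.

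This is exactly the paper's Proposition~\ref{prop:proportions}, adapted from \cite{coron2021emergence}:
\begin{itemize}
\item Stop at $T_\varepsilon\wedge T_0\wedge R_{\mathcal A\varepsilon}\wedge U_{\varepsilon^{1/6}}$. Before that time the two prey growth rates have definite signs.
\item Hence a linear test function $f=A_0N_0-A_1N_1$ satisfies $\mathcal{L}f\ge N_0+N_1$, which bounds the expected drift by $C\varepsilon$.
\item The martingale part has bracket $O(\log K/K^m)$ on $[0,C_l\log K]$, and $\tau\le C_l\log K$ with high probability by comparison with a supercritical branching process.
\item The total predator mass, the hyperbolic direction, is controlled separately by logistic comparisons.
\end{itemize}
Without this you cannot guarantee that the proportion entering the deterministic window is $o_\varepsilon(1)$-close to $\alpha_0$. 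Hence you cannot guarantee that the next resident state is $\Psi_1(\widetilde{\bm z}_k)$, i.e.\ the one with proportion $\kappa_0(\alpha_0)$. Your ``main obstacle'' paragraph addresses continuity of $\alpha_\infty$ in the deterministic phase, which is the easier part.

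A secondary point concerns case A. Citing Champagnat's exit estimate is not off-the-shelf there either: it is a single-scale large-deviation result, whereas here prey live on scale $K$ and predators on scale $K^m$, with a non-monotone coupling. The paper instead proves Proposition~\ref{prop:stability_eq} with the exponential Lyapunov functional $e^{\lambda V}$, taking $\lambda_K=aK^{m\wedge 1}$, which handles both scales at once. A two-scale Freidlin--Wentzell argument could work in principle, but you would have to establish it.
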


\noindent Note that the convergence of $ (\bm{Z}^K_{sc}(t), t \in [0,T]) $ is actually locally uniform on each $ (s_k, s_{k+1}) $, see for example Lemma~\ref{lemma:invasion_phase} below in case~A.
\\

\begin{figure}[htb]
\centering
\includegraphics[width = \linewidth]{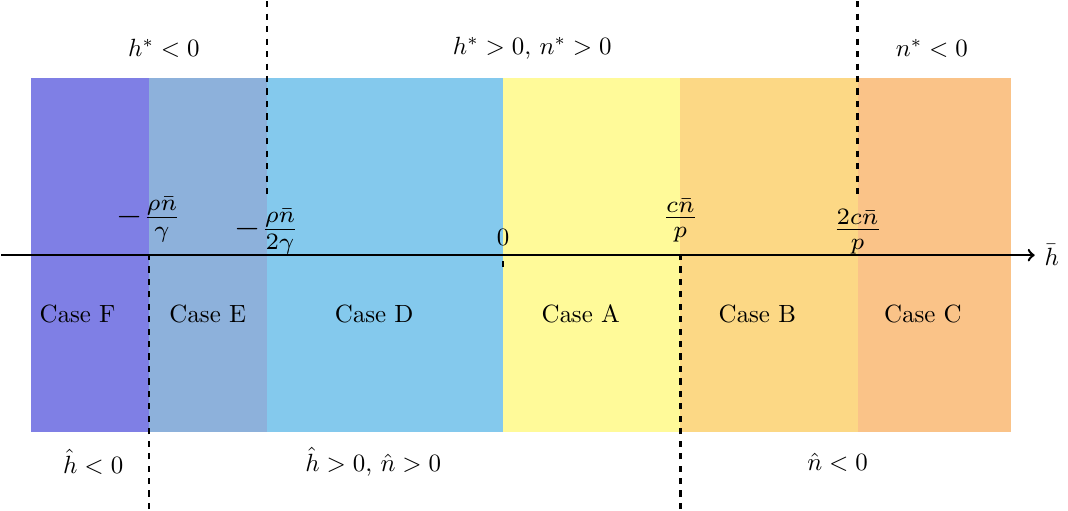}
\caption{Graphical representation of the six cases studied in our analysis, as a function of the value of $\bar{h}$ relative to $\bar{n}$. The non-matching types equilibria $(\bar{n}, 0, 0, \bar{h})$ and $(0, \bar{n}, \bar{h}, 0)$ described in Proposition~\ref{prop:non-matching} exist whenever $\bar{h} > 0$. The matching types equilibria $(\hat{n}, 0, \hat{h}, 0)$ and $(0, \hat{n}, 0, \hat{h})$ described in Proposition~\ref{prop:cv_sys_matching} exist when $\bar{h} \in (-\frac{\rho}{\gamma} \bar{n}, \frac{c}{p} \bar{n} )$. Finally, the coexistence equilibrium $(n^*, n^*, h^*, h^*)$ described in Proposition~\ref{prop:cv_syst_4} exists when $\bar{h} \in (-\frac{\rho}{2 \gamma} \bar{n}, \frac{2 c}{\rho} \bar{n})$. The resulting six cases are shown in the figure. }
\label{fig:cases}
\end{figure}

Before proving Theorem~\ref{thm:main_result}, let us describe the behaviour of the limiting exponents 
$$\bigl(x_0(t), x_1(t), y_0(t), y_1(t), t \in [0,\tau_*)\bigr)$$ in more detail.
In view of the results of Section~\ref{subsec:dynamical_system_results}, we need to consider six different situations, corresponding to the existence of different sets of equilibria for the dynamical system \eqref{eq:syst-4} (Figure~\ref{fig:cases}). Recall that we assume throughout that $b-d>0$, i.e. $\bar{n}>0$.

\begin{itemize}
\item[\textbf{Case A:}] $\bar{h} \in (0, \frac{c}{p} \bar{n})$ --  This corresponds to the situation where the four-types coexistence equilibrium, the matching types equilibrium and the non-matching types equilibrium are all positive, i.e.
\begin{align*}
    \bar{h} > 0, && \hat{h} > 0, && \hat{n} > 0, && n^* > 0, && h^* > 0.
\end{align*}
In this case, we will prove in Section \ref{sec:A} that there is always a prey population and a predator population of macroscopic order, that is of order $K$ for prey and $K^m$ for predators. When the types of the macroscopic prey and predator are matching, then the other prey will invade the community and replace the former prey. Otherwise, when the types of the macroscopic prey and predator do not match, the predator population whose type is associated with the prey will invade and replace the former predator.\ This situation is shown in Figure \ref{fig:caseA}.\\
We will prove that these successive invasions accumulate in finite time and that at this time $S_*$ all exponents are equal to $1$. Since the equilibrium with four types is stable we expect that after the accumulation time, the four populations converge to this coexistence equilibrium. Due to the strong symmetry of the system we only succeed to prove this in a slightly modified setting described in Section \ref{subsec:accumulation_point}.

\item[\textbf{Case B:}] $\bar{h} \in (\frac{c}{p} \bar{n}, \frac{2c}{p} \bar{n})$  -- This corresponds to the situation where the four-types coexistence equilibrium and the non-matching types equilibrium exist, but the matching types equilibrium does not, i.e.
\begin{align*}
    \bar{h} > 0, && n^* > 0, && h^* > 0, && \hat{n} < 0.
\end{align*}
In this case, the matching types equilibrium $(\hat{n}, \hat{h})$ is not positive and a prey and its matching predator cannot coexist on their own. This will lead to a very different situation than in case A, since, at the beginning, only predators will reach the macroscopic level. The assumption $n^*>0$ (which is equivalent to $\alpha_c > 1/2$, in the notation of Proposition~\ref{prop:syst-3}) means that both prey are able to invade the predator equilibrium $(0,0, \alpha \bar{h}, (1-\alpha) \bar{h})$ if the proportions of predator types are sufficiently close to $1/2$ (see Proposition \ref{prop:syst-3}). 
In fact, we prove that the dynamics will be governed by a succession of prey invasions, where the invading prey type corresponds to the less abundant predator (see Figure \ref{fig:caseB}). These prey invasions will accumulate, as in case A, leading to a situation where all populations reach a macroscopic level, and thus to the eventual coexistence of the four types.

\item[\textbf{Case C:}] $\bar{h} > \frac{2c}{p} \bar{n}$ -- This corresponds to the situation where neither the four-types equilibrium nor the matching types equilibrium are positive, i.e.
\begin{align*}
    \bar{h} > 0, && n^* < 0, && \hat{n} < 0.
\end{align*}
This situation is very similar to the previous one since the dynamics will be characterized by successive prey invasions into a population where the two predator types are at the macroscopic level. However, in this case when predator proportions are close to $1/2$, then no prey population is able to invade. 
We prove that after each prey invasion, the proportion of predators ends closer to $1/2$, and that, after a finite number of prey invasions, both prey populations become extinct (see Figure \ref{fig:caseC}).

\item[\textbf{Case D:}] $\bar{h} \in (-\frac{\rho}{2\gamma} \bar{n}, 0)$ -- This corresponds to the situation where both the matching types and four-types equilibria are positive, but the non-matching types equilibrium is not, i.e.
\begin{align*}
    \bar{h} < 0, && \hat{n} > 0, && \hat{h} > 0, && n^* > 0, && h^* > 0.
\end{align*}
It is similar to case B, but the roles of prey and predators are exchanged.
More precisely, because $\bar{h}<0$, the predator population in the non-matching equilibrium is negative, so that only the two prey populations will be at the macroscopic level at the beginning. 
Assumption $h^*>0$ then ensures that both predators are able to invade the prey equilibrium $(a \bar{n}, (1-a) \bar{n}, 0, 0)$ if the proportion of prey $a$ is sufficiently close to $1/2$. 
We prove that, similarly as in case B, the dynamics will be governed by successive predator invasions, where the invading predator corresponds to the most abundant prey. These invasions accumulate and lead to a situation with all species present at a macroscopic level.

\item[\textbf{Case E:}] $\bar{h} \in (-\frac{\rho}{\gamma} \bar{n}, -\frac{\rho}{2 \gamma} \bar{n})$ -- This corresponds to the situation where the matching types equilibrium exists, but the four-types and the non-matching types equilibria are negative, i.e.
\begin{align*}
    \bar{h} < 0, && \hat{n} > 0, && \hat{h} > 0, && h^* < 0.
\end{align*}
It is similar to case C. In this case, as in case D, successive invasion attempts by predators will drive the prey proportions into the vicinity of $1/2$. Because $h^*< 0$, once these proportions are sufficiently close to $1/2$ none of the predator populations will be able to invade, and the predator populations will become extinct.

\item[\textbf{Case F:}] $ \bar{h} < - \frac{\rho}{\gamma} \bar{n} $ -- In this case, neither the four-types nor the matching types equilibria are positive, i.e.
\begin{align*}
    \bar{h} < 0, && \hat{h} < 0, && h^* < 0.
\end{align*}
This means that both predator populations will go extinct immediately and the two prey species may coexist. As a consequence, we do not consider this case in this article.
\end{itemize}

\section{Detailed study of case A}
\label{sec:A}

\subsection{Description of the limiting exponents}

Let us denote by $\bar{z}_{0,1}$ and $\bar{z}_{1,0}$ the two equilibria featuring a prey population and a non-matching predator population, more precisely,
\begin{align*}
    \bar{z}_{0,1} = (\bar{n}, 0, 0, \bar{h}), && \bar{z}_{1,0} = (0, \bar{n}, \bar{h}, 0).
\end{align*}
Similarly, let $\hat{z}_0$ and $\hat{z}_1$ denote the two equilibria consisting of a prey population and the matching predator population,
\begin{align*}
    \hat{z}_0 = (\hat{n}, 0, \hat{h}, 0), && \hat{z}_1 = (0, \hat{n}, 0, \hat{h}).
\end{align*}
We observe that in this case, the successive invasions will lead to cyclic dynamics of the equilibrium $(\widetilde{z}_{k},k\ge0)$ with the successive states
$$
\begin{array}{ccc}
\hat{z}_0& \longrightarrow & \bar{z}_{1,0}\\
\uparrow&&\downarrow\\
\bar{z}_{0,1}&\longleftarrow&\hat{z}_1
\end{array}
$$
The situation is illustrated in Figure \ref{fig:caseA}. The associated slopes will depend on whether the resident equilibrium is composed of matching or non-matching predator-prey types.

\begin{figure}[h!]
\begin{center}
\includegraphics[width=\linewidth]{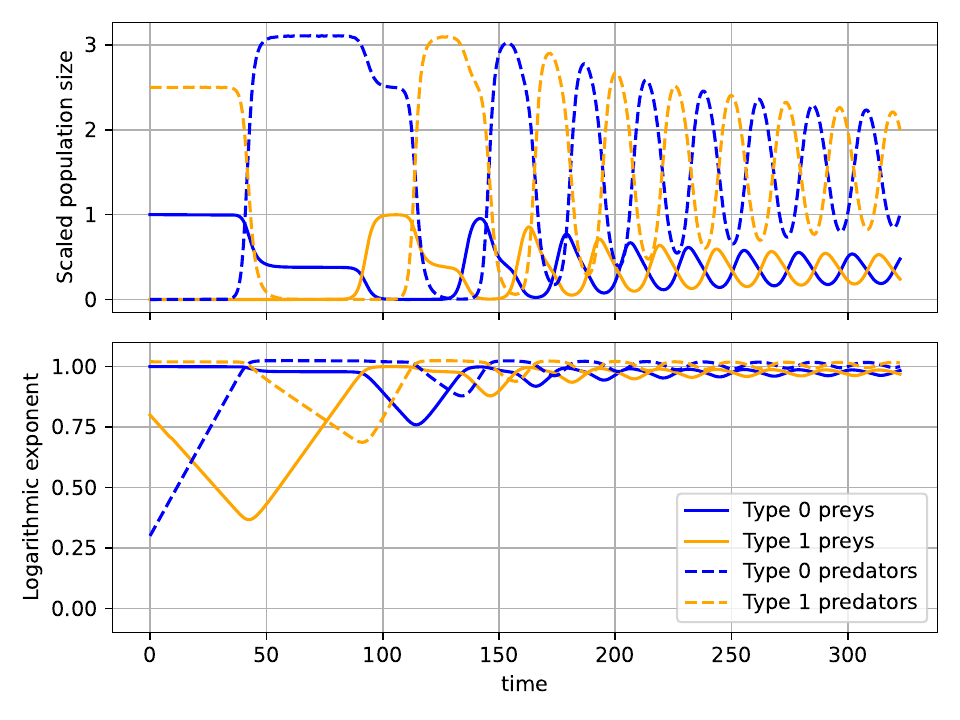}
\end{center}
\caption{Case A. Numerical simulation of the process $Z^K$ for $K=10^{20}$. We observe in the lower panel the convergence of the limiting exponent toward the limiting process. The upper panel corresponds to the associated rescaled population sizes. For small times (on $[0,150]$) we observe convergence to the process of equilibria, while afterwards the convergence speed of the deterministic system is not rapid enough and we see that the population sizes go closer to the four type coexistence equilibrium.}
\label{fig:caseA}
\end{figure}
\begin{figure}[h!]
\begin{center}
\includegraphics[width=\linewidth]{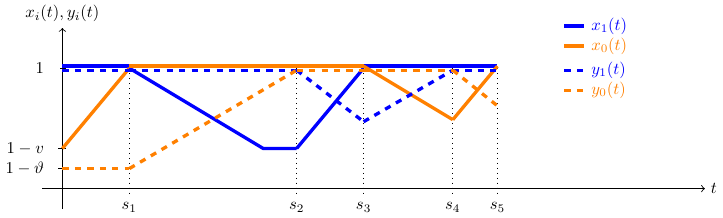}
\end{center}
\caption{Case A. Illustration of the limiting process.}
\label{fig:caseA2}
\end{figure}

Let us now describe the slopes associated with the limiting process $(x_0,x_1,y_0,y_1)$.
There are two different situations:
\begin{itemize}
    \item In a non-matching types equilibrium $\bar{z}_{0,1}$, we have \begin{align*}
    &r_1(\bar{z}_{0,1}) = b - d - c\bar{n} - p \bar{h} = - p \bar{h} < 0,\\ &\varrho_0(\bar{z}_{0,1}) = \frac{1}{m} (\beta - \delta - \gamma \bar{h} + \rho \bar{n}) = \frac{\rho \bar{n}}{m} > 0.
\end{align*}
In other words, the type 1 prey population is decaying and the type 0 predator population is invading.
The situation is symmetrical in the other non-matching equilibrium. We therefore set
\begin{equation}
    \label{eq:slope_A_bar}
    \bar{r}=-p\bar{h}, \qquad \bar{\varrho} =\frac{\rho \bar{n}}{m}\, .
\end{equation}
\item 
In a matching types equilibrium $\hat{z}_0$ or $\hat{z}_1$ the opposite prey will be favoured while the opposite predator decreases.
We indeed have that, using Eq.~\eqref{eq:n_hat_h_hat} in the last step of each line,
\begin{align} \label{slopes_matching_1}
    &\hat{r} :=r_1(\hat{z}_0)=r_0(\hat{z}_1)= b-d-c\hat{n} = c(\bar{n}-\hat{n}) = p \hat{h} > 0,\\
    &\hat{\varrho}:=\varrho_1(\hat{z}_0) = \varrho_0(\hat{z}_1)=\frac{\gamma}{m}(\bar{h}-\hat{h}) = - \frac{\rho}{m} \hat{n} <0. \label{slopes_matching_2}
\end{align}
\end{itemize}

\begin{prop}\label{prop:invasionTime_contraction}
    We assume case A where $\hat{n}>0$ and $\bar{h}>0$. We then have
    \begin{equation}
        s_{2(k+2)}-s_{2(k+1)} \leq \left(\frac{\bar{r}\hat{\varrho}}{\hat{r}\bar{\varrho}} \right)(s_{2(k+1)}-s_{2k})
    \end{equation}
    and $\bar{r}\hat{\varrho}/\hat{r}\bar{\varrho}<1$.\\
    As a consequence $\lim_{k\to\infty} s_k =S_*<+\infty$
\end{prop}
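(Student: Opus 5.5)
The plan is to follow the limiting exponents through one full cycle of the equilibria $\hat{z}_0\to\bar{z}_{1,0}\to\hat{z}_1\to\bar{z}_{0,1}\to\hat{z}_0$. In each phase we measure the \emph{deficit} $1-x_i$ or $1-y_i$ of the types that are not resident. The lengths of the phases can be bounded by these deficits, and each deficit is in turn created during the previous phase. Set $T_j:=s_{j+1}-s_j$.

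\textbf{Matching phase.} Consider a phase $[s_j,s_{j+1})$ with resident $\hat{z}_0$; the other matching phase is symmetric. This phase was entered through the invasion of predator $0$ into $\bar{z}_{0,1}$, so predator $1$ was resident up to $s_j$ and $y_1(s_j)=1$.
\begin{itemize}
\item In Definition~\ref{def:limit}, prey $1$ grows with slope $\hat{r}=p\hat{h}>0$ from $x_1(s_j)=1-u_j$. Hence $T_j=u_j/\hat{r}$.
\item Predator $1$ decreases with slope $\hat{\varrho}<0$. The mutation floor $y_0-\vartheta=1-\vartheta$ can only slow this decrease. So at time $s_{j+1}$ its deficit is $w_{j+1}=\min\bigl(\vartheta,|\hat{\varrho}|T_j\bigr)\le|\hat{\varrho}|T_j$.
\end{itemize}

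\textbf{Non-matching phase.} The next phase has resident $\bar{z}_{1,0}$.
\begin{itemize}
\item Predator $1$ now grows with slope $\bar{\varrho}=\rho\bar{n}/m>0$ starting from deficit $w_{j+1}$, so $T_{j+1}=w_{j+1}/\bar{\varrho}\le \frac{|\hat{\varrho}|}{\bar{\varrho}}T_j$.
\item Prey $0$ was resident until $s_{j+1}$, so $x_0(s_{j+1})=1$. It decays with slope $\bar{r}=-p\bar{h}$, again cut off by the floor $1-v$.
\end{itemize}
Hence the deficit passed to the next matching phase satisfies $u_{j+2}\le |\bar{r}|T_{j+1}$, and therefore $T_{j+2}\le \frac{|\bar r|}{\hat r}T_{j+1}$.

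\textbf{Two-step contraction.} Combining the two bounds gives $T_{j+2}\le \theta\,T_j$, where
$$\theta:=\frac{\bar r\hat\varrho}{\hat r\bar\varrho}=\frac{|\bar r|\,|\hat\varrho|}{\hat r\,\bar\varrho}.$$
The same argument, with the roles of the phases exchanged, gives $T_{j+2}\le\theta\,T_j$ when $T_j$ is a non-matching phase. Summing the bounds for $T_{2k+2}$ and $T_{2k+3}$ yields the claimed inequality $s_{2(k+2)}-s_{2(k+1)}\le\theta\,(s_{2(k+1)}-s_{2k})$.

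\textbf{Main obstacle: the very first phase.} The argument above uses that the type which starts decaying in a phase was resident, with exponent exactly $1$, just before that phase. For $j\ge1$ this holds by construction. For $j=0$ it follows from the assumption~\eqref{condition_initial_exponents} that $\widetilde{\bm z}_0$ is a matching or non-matching equilibrium whose residents have exponent $1$. I also need to check that no stopping event of Definition~\ref{def:limit} interferes. Two points settle this:
\begin{itemize}
\item In each phase exactly one exponent has a positive slope, and the resident slopes vanish, so no two invasions can occur simultaneously.
\item The decaying exponents never go below the mutation floors $1-v$ or $1-\vartheta$, so none of them reaches $0$.
\end{itemize}

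\textbf{The factor $\theta$ is smaller than $1$.} Using \eqref{eq:slope_A_bar}, \eqref{slopes_matching_1} and \eqref{slopes_matching_2},
$$\theta=\frac{p\bar h\cdot\rho\hat n/m}{p\hat h\cdot\rho\bar n/m}=\frac{\bar h\,\hat n}{\hat h\,\bar n}.$$
From \eqref{slopes_matching_1}, $c(\bar n-\hat n)=p\hat h>0$, so $0<\hat n<\bar n$. From \eqref{slopes_matching_2}, $\gamma(\hat h-\bar h)=\rho\hat n>0$, so $0<\bar h<\hat h$. Hence $\theta<1$.

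\textbf{Finiteness of $S_*$.} The contraction gives $s_{2(k+1)}-s_{2k}\le\theta^k s_2$, so $\lim_k s_k\le s_2/(1-\theta)<\infty$.
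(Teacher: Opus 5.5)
Your argument is the same as the paper's. Each phase length is a deficit divided by a slope, that deficit is at most $|\text{slope}|$ times the previous phase length, and two steps combine into a contraction with $\theta=\bar h\hat n/(\hat h\bar n)$. Your proof of $\theta<1$ from $c(\bar n-\hat n)=p\hat h$ and $\gamma(\hat h-\bar h)=\rho\hat n$ is a tidy alternative to the paper's substitution of \eqref{eq:n_hat_h_hat}.

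The gap is in how you handle the first phase. The type that decays during phase $0$ is never a resident of $\widetilde{\bm z}_0$: it is predator $1$ if $\widetilde{\bm z}_0=\hat z_0$, and prey $1$ if $\widetilde{\bm z}_0=\bar z_{0,1}$. So \eqref{condition_initial_exponents} forces its exponent to be \emph{strictly below} $1$ at time $0$, which is the opposite of what you claim. As a result, $1-y_1(s_1)=\min\bigl(\vartheta,\,1-y_1(0)+|\hat\varrho|T_0\bigr)$ can be much larger than $|\hat\varrho|T_0$, and $T_1\le\frac{|\hat\varrho|}{\bar\varrho}T_0$ fails.

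This is more than a missing justification. Take $\widetilde{\bm z}_0=\hat z_0$, $y_1(0)=1-\vartheta$, $x_1(0)=1-u_0$, and parameters with $|\bar r|\vartheta<v\bar\varrho$. Then $T_0=u_0/\hat r$, $T_1=\vartheta/\bar\varrho$, $T_2=\frac{|\bar r|}{\hat r}T_1$ and $T_3=\theta T_1$. Hence $s_4-s_2-\theta(s_2-s_0)=T_2-\theta T_0>0$ whenever $0<u_0<\min\bigl(v,\vartheta\hat r/|\hat\varrho|\bigr)$. The displayed inequality is therefore false for $k=0$ in general.

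The paper's proof has the same blind spot. It sets $y_1(s_k)=1$ because ``$s_k$ is the time of the $(k-1)$-th invasion'', which does not apply when $k=0$. The repair is easy. Every phase $j\ge1$ is entered through an invasion, so its decaying type was resident at $s_j$ and starts at exponent $1$. This gives $T_{j+2}\le\theta T_j$ for all $j\ge1$, hence the stated inequality for $k\ge1$. It also gives $S_*\le s_2+\frac{s_4-s_2}{1-\theta}<\infty$, which should replace your bound $s_2/(1-\theta)$.
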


\begin{cor}\label{cor:exp_conv}
Assuming case A where $\hat{n}>0$ and $\bar{h}>0$, we have furthermore that $$\lim_{s\uparrow S_*}x_i(s)=1 \quad \text{and}\quad \lim_{s\uparrow S_*}y_i(s)=1\,, $$
for $i\in\{0,1\}$.
\end{cor}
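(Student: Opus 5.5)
The plan is to follow each exponent through one period of the four-state cycle
$\hat{z}_0 \to \bar{z}_{1,0} \to \hat{z}_1 \to \bar{z}_{0,1} \to \hat{z}_0$. The aim is to show that, on each interval $[s_k,s_{k+1})$, every exponent stays within $C\,(s_{k+1}-s_k)$ of $1$, with $C:=\max(|\bar r|,|\hat\varrho|)$. Proposition~\ref{prop:invasionTime_contraction} gives $s_k\uparrow S_*<\infty$, so the sequence $(s_k)$ is Cauchy and $s_{k+1}-s_k\to0$. The corollary then follows at once.

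\textbf{Step 1: behaviour on one interval.} I fix $k$ with, say, $\widetilde{\bm z}_k=\hat{z}_0$ (the other three cases are symmetric). The two resident types $n_0,h_0$ have zero slope by definition of the equilibrium, so $x_0=y_0=1$ on $[s_k,s_{k+1})$. By \eqref{slopes_matching_1}, $x_1$ increases with slope $\hat r>0$. By \eqref{slopes_matching_2}, $y_1$ decreases with slope $\hat\varrho<0$, and the mutation floor $(y_0-\vartheta)\vee0$ in Definition~\ref{def:limit} can only raise it.

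The key point is that $y_1(s_k)=1$. Indeed, the previous state was $\bar{z}_{0,1}=(\bar n,0,0,\bar h)$, in which $h_1$ is resident, and $\widetilde{\bm z}_k=\hat z_0$ arises from the invasion of $h_0$. Hence for $t\in[s_k,s_{k+1})$ we get $y_1(t)\ge 1-|\hat\varrho|(s_{k+1}-s_k)$.

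\textbf{Step 2: the next interval.} On the following interval $[s_{k+1},s_{k+2})$ the state is $\bar z_{1,0}=(0,\bar n,\bar h,0)$. There $y_1$ has slope $\bar\varrho>0$, so it increases monotonically from its value at $s_{k+1}$ until it reaches $1$ at $s_{k+2}$. Its infimum over this interval is therefore its value at $s_{k+1}$, which is still at least $1-|\hat\varrho|(s_{k+1}-s_k)$.

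The same bookkeeping applies to every exponent. Each type, over one period, is resident, then decays during exactly one interval (after being at $1$ at the start of that interval), then grows during the next interval until it reinvades, then is resident again. For prey the decay slope is $\bar r=-p\bar h$ (for instance $n_0$ in $\bar z_{1,0}$, where $x_0$ starts at $1$ since $n_0$ was resident in $\hat z_0$). For predators the decay slope is $\hat\varrho$.

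\textbf{Step 3: conclusion.} From Steps 1 and 2, for $t\in[s_k,s_{k+1})$ and all $i$,
\[
\min\bigl(x_i(t),y_i(t)\bigr)\ \ge\ 1-C\max\bigl(s_{k+1}-s_k,\; s_k-s_{k-1}\bigr).
\]
Combined with the upper bound $x_i,y_i\le1$ noted after Definition~\ref{def:limit}, and with $s_{k+1}-s_k\to0$, this gives $\lim_{s\uparrow S_*}x_i(s)=\lim_{s\uparrow S_*}y_i(s)=1$.

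The only delicate point is the verification, from the cyclic ordering of equilibria in case A, that each decaying exponent starts its decay phase exactly at $1$. Equivalently, no exponent decays during two consecutive intervals, so the drops never accumulate across intervals. This needs a short case check of which types are resident before and after each invasion.
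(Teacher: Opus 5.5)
Your proposal is correct and follows essentially the paper's argument. Both use the cycle $\hat z_0\to\bar z_{1,0}\to\hat z_1\to\bar z_{0,1}\to\hat z_0$ to show that each exponent's distance from $1$ is at most a slope times the length of a single invasion interval, and this length tends to $0$ because $s_k\uparrow S_*<\infty$. The paper pins the minimum of $x_1$ through the growth-phase identity $1-x_1(s_{4n})=\hat r(s_{4n+1}-s_{4n})$ and then appeals to the piecewise linear (monotone) shape between the $s_k$, whereas you bound the dip through the decay slope and handle the mutation floor explicitly (your bound only applies once $k\ge 2$, since the initial non-resident exponents need not start at $1$, which does not affect the limit).
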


\begin{proof}[Proof of Proposition \ref{prop:invasionTime_contraction}]
Let us write the basic identity
$$s_{2(k+1)}-s_{2k}=s_{2(k+1)}-s_{2k+1}+s_{2k+1}-s_{2k}\,.$$
In the sequel we will prove the recursion on two sequences $(s_{2(k+1)}-s_{2k+1})$ and $(s_{2k+1}-s_{2k})$.
Without loss of generality we start at time $s_k$ in a matching types equilibrium $\hat{z}_0$, but the same result follows if we start in a non-matching equilibrium as will be explained below.\smallskip

\noindent\textbf{Step 1: Invasion of a prey.}
Since $s_k$ is the time of the $(k-1)$-th invasion that led to the matching types equilibrium $\hat{z}_0$, the $k$-th invasion event corresponds to the type 1 prey invasion. We therefore have that $\widetilde{z}_k=\hat{z}_0$ and $\widetilde{z}_{k+1}=\bar{z}_{1,0}$. 
The length of this prey invasion is given by 
\begin{equation}
    \label{eq:first_step}
    s_{k+1}-s_k = \frac{1-x_1(s_k)}{\hat{r}}\, ,
\end{equation}
where $x_1(s_k)$ is the population density of the invading prey.
During this invasion the predator population of type 1 will decrease from $y_1(s_k)=1$ to 
\begin{align*}
y_1(s_{k+1}) &= 1 + \hat{\varrho}\left(\frac{1-x_1(s_k)}{\hat{r}}\right)\vee \vartheta \\&\ge 1 + \hat{\varrho}\left(\frac{1-x_1(s_k)}{\hat{r}}\right)\, .\end{align*}

\noindent\textbf{Step 2: Invasion of a predator.}
The new starting situation is $\widetilde{z}_{k+1}=\bar{z}_{1,0}$ and this will lead to the invasion of predators of type 1, i.e. $\widetilde{z}_{k+2}=\hat{z}_1$. The duration of this transition is
\[ s_{k+2}-s_{k+1} = \frac{1-y_1(s_{k+1})}{\bar{\varrho}}\, . \]
During this invasion the prey population of type 0 will decrease to 
\[ x_0(s_{k+2}) \ge  1 + \bar{r} \left( \frac{1-y_1(s_{k+1})}{\bar{\varrho}} \right)\, . \]

\noindent\textbf{Step 3: Invasion of a prey.}
Starting in $\widetilde{z}_{k+2}=\hat{z}_1$, next prey of type 0 will invade. We then have 
\[ \begin{aligned}
s_{k+3}-s_{k+2} &= \frac{1-x_0(s_{k+2})}{\hat{r}} \le  - \frac{\bar{r}(1-y_1(s_{k+1}))}{\hat{r}\bar{\varrho}} \le \frac{\bar{r}\hat{\varrho}}{\bar{\varrho}\hat{r}} (s_{k+1}-s_k)\, .
\end{aligned}
\]\medskip

A similar reasoning by considering first a predator invasion, then a prey invasion, allows to obtain
\[ \begin{aligned}
s_{k+4}-s_{k+3} &\le   \frac{\bar{r}\hat{\varrho}}{\bar{\varrho}\hat{r}} (s_{k+2}-s_{k+1})\, ,
\end{aligned}
\]
which leads to the stated upper bound.

\noindent\textbf{Step 4: Conclusion.}
To prove that the difference in invasion times is a contraction, we need to show that $\bar{r}\hat{\varrho}/\bar{\varrho}\hat{r}<1$.
In view of \eqref{eq:slope_A_bar}, \eqref{slopes_matching_1} and \eqref{slopes_matching_2}, we have
\begin{equation*}
    \frac{\bar{r} \hat{\varrho}}{\hat{r} \bar{\varrho}} = \frac{p \bar{h} \frac{\rho}{m} \hat{n}}{p \hat{h} \frac{\rho}{m} \bar{n}} = \frac{\bar{h} \hat{n}}{\hat{h} \bar{n}}.
\end{equation*}
Substituting \eqref{eq:n_hat_h_hat} into this equation yields
\begin{equation*}
    \frac{\bar{r} \hat{\varrho}}{\hat{r} \bar{\varrho}} = \frac{\bar{h}\bar{n} - \frac{p}{c} \bar{h}^2}{\bar{h}\bar{n} + \frac{\rho}{\gamma} \bar{n}^2} < 1.
\end{equation*}
Defining $s_{N}$ as a telescopic sum, we see that it admits a finite limit as $N\to\infty$:
\[ s_{2(N+1)} = \sum_{k=0}^N s_{2(k+1)}-s_{2k} \leq (s_2-s_0) \sum_{k=0}^N \left( \frac{\bar{r}\hat{\varrho}}{\hat{r}\bar{\varrho}} \right)^k \xrightarrow{N\to\infty} S_\ast < \infty. \]
\end{proof}

\begin{proof}[Proof of Corollary \ref{cor:exp_conv}] 
Let us consider similarly as above a $k$ such that $\widetilde{z}_k=\hat{z}_0$, we prove below that $\lim_{s\uparrow S_*} x_1(s)=1$. Similar reasoning can be made for the three other quantities. 

Note that we have observed in \eqref{eq:first_step} that 
$$1-x_1(s_k)=\hat{r}(s_{k+1}-s_{k})$$ and furthermore that from the cyclic behaviour of invasion
\begin{align*}
    &x_1(s_{k+1})=x_1(s_{k+2})=x_1(s_{k+3})=1,\\
    &\widetilde{z}_{k+4}=\hat{z}_0.
\end{align*}
We therefore deduce that for all $n\ge0$
$$1-x_1(s_{4n}) = \hat{r}(s_{4n+1}-s_{4n})$$
and thus as $(s_{4n+1}-s_{4n})\to_{n\to\infty}0$ we conclude that $\lim_{n\to\infty} x_1(s_n)=1$. This allows to conclude the proof since $(x_1(s), s\ge0)$ is the linear interpolation of the sequence $(x_1(s_k))_{k\ge1}$. 
\end{proof}

In the next subsection, we adapt the proofs of the works \cite{champagnat_stochastic_2021} and \cite{coquille_stochastic_2021} to our setting to obtain the convergence of the exponents $X^K_i(\cdot \log(K))$ and $Y^K_i(\cdot \log(K))$ on each interval $[s_k, s_{k+1}]$.
To study the limiting behaviour of our system around and after the accumulation point $S_*$, additional work is needed. Section \ref{subsec:accumulation_point} is devoted to this study.

\subsection{Convergence of the exponents} \label{subsec:cvg_exponents}

Here we prove the convergence of $X^K_i(t \log(K))$ and $Y^K_i(t\log(K))$ to a deterministic limit on intervals of the form $[0, s_k]$  for $k \in \N$ as stated in Theorem \ref{thm:main_result}.
The proof follows similar steps as the proof of Theorem 2.2 and Proposition 2.3 in  \cite{coquille_stochastic_2021} Section 4. \medskip

We assume that at time 0 the resident populations are close to the matching types equilibrium, and the other populations are much smaller. For any $\nu>0$ assuming $K$ large enough we have
\begin{align}
    \left| \frac{1}{K} \prey_0(0) - \hat{n} \right| \leq \nu, && \left| \frac{1}{K^m} \pred_0(0) - \hat{h} \right| \leq \nu,
\end{align}
and
\begin{align}
    \left| X^K_1(0) - x_1^0 \right| \leq C_0 \, \nu, && \left| Y^K_1(0) - y_1^0 \right| \leq C_0 \, \nu,
\end{align}
for some $x_1^0 \in (1-v,1)$ and $y_1^0 \in (1-\vartheta,1]$ and $C_0 > 0$.
As a result, the first invasion will be by the type 1 prey population.
(We assume that $x_1^0 > 1-v$ and $y_1^0 > 1 - \vartheta$ to avoid a jump at time 0, and we assume that $x_1^0 < 1$ so that it takes some non-negligible time for the type 1 prey population to invade.)
If $y_1^0 = 1$, then we also assume that $\prey_1(0) \leq \varepsilon \eta K$. This ensures that at least one population has not reached its macroscopic size yet. The opposite case would correspond to what we call the accumulation point, which will be treated below in Section \ref{subsec:accumulation_point}.

Let us now define the following stopping time.
For $ \varepsilon > 0 $, $C > 1$ and $ \eta > 0 $, we define
\begin{multline*}
	\theta^K_1 := \inf \Bigg\lbrace t \geq 0 : \prey_1(t) > \varepsilon \eta K \text{ or } \pred_1(t) > \varepsilon \eta K^m \\ \left. \text{ or } \left| \frac{1}{K}\prey_0(t) - \hat{n} \right| >  \varepsilon \text{ or } \left| \frac{1}{K^m} \pred_0(t) - \hat{h} \right| >  \varepsilon \right\rbrace.
\end{multline*}

\begin{prop} \label{prop:stability_eq}
	For any $ \varepsilon \in (0, \hat{n} \wedge \hat{h}) $ and $\nu\le \sqrt{2}\varepsilon$ there exist $ \eta > 0 $ and $ \zeta > 0 $ such that, if initial conditions satisfy the previous assumptions then, for any sequence $ (t_K, K > 0) $ satisfying $ \log(t_K) \leq \zeta K^{m\wedge 1} $,
	\begin{align*}
		\lim_{K \to \infty} \P\left( \left| \frac{1}{K} \prey_0(t_K \wedge \theta^K_1) - \hat{n} \right| >  \varepsilon \text{ or } \left| \frac{1}{K^m} \pred_0(t_K \wedge \theta^K_1) - \hat{h} \right| >  \varepsilon \right) = 0.
	\end{align*}

\end{prop}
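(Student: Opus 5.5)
Proposition~\ref{prop:stability_eq} is a standard ``stability of a resident equilibrium under small perturbations'' statement, in the spirit of Champagnat (2006) and Coquille et al. My plan is to combine a Lyapunov function for the matching system \eqref{eq:2-matching} with large-deviation estimates for the rescaled process, and to treat the type~1 populations as small perturbations: before $\theta^K_1$ they are bounded by $\varepsilon\eta K$ and $\varepsilon\eta K^m$.

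\textbf{Step 1: a Lyapunov function.} For the matching system I will use the classical Volterra function
\[
V(n,h) = \rho\Bigl(n-\hat n-\hat n\log\tfrac{n}{\hat n}\Bigr) + p\Bigl(h-\hat h-\hat h\log\tfrac{h}{\hat h}\Bigr).
\]
Along solutions of \eqref{eq:2-matching} it satisfies $\dot V = -\rho c(n-\hat n)^2 - p\gamma(h-\hat h)^2$; this is presumably also how Proposition~\ref{prop:cv_sys_matching}~ii) is proved. Now add the perturbation terms $-cn_1$ in the prey equation and $-\gamma h_1$ in the predator equation, with $n_1\le\varepsilon\eta$ and $h_1\le\varepsilon\eta$. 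The derivative then acquires extra terms of size $O(\varepsilon\eta)$. Hence on the annulus where $\|(n,h)-(\hat n,\hat h)\|\in[\varepsilon/2,\varepsilon]$ we still have $\dot V\le -c_1\varepsilon^2$, provided $\eta$ is small enough.

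\textbf{Step 2: sublevel sets.} I will choose sublevel sets $\{V\le \ell_1\}\subset\{V\le\ell_2\}$. The inner set should contain the $\nu$-ball; this is where $\nu\le\sqrt2\varepsilon$ and some tuning of constants come in, and if needed I would shrink the neighbourhoods. The outer set should be contained in the $\varepsilon$-box.

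\textbf{Step 3: exit estimate.} The task is to bound the probability that the scaled process $(N^K_0/K, H^K_0/K^m)$ exits $\{V\le\ell_2\}$ before $\theta^K_1$ and before time $t_K$. I would use the Freidlin--Wentzell-type argument of Champagnat's Theorem~3(c), which rests on Lemma~\ref{lemapprox} together with exponential martingale bounds, as follows.
\begin{itemize}
\item Each time the process enters the annulus between the two level sets, the drift pushes it back inside $\{V\le\ell_1\}$ within time $O(1)$.
\item The probability of instead crossing to the outer level during one such excursion is at most $e^{-VK^{m\wedge 1}}$ for some $V>0$. The scaling $K^{m\wedge1}$ appears because the predator coordinate has jumps of size $K^{-m}$ and the prey coordinate jumps of size $K^{-1}$, so the slower of the two concentrations governs the bound.
\item The number of excursions before time $t_K$ is at most of order $t_K K^{\max(1,m)}$, since that is the total jump rate.
\end{itemize}
A union bound then gives a failure probability of at most $t_K K^{C}e^{-VK^{m\wedge1}}$. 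Taking $\zeta<V$ sends this to zero whenever $\log t_K\le\zeta K^{m\wedge1}$.

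\textbf{Step 4: coupling.} The mutation immigration into the type~0 populations, of rates $bv_KN_1$ and $\vartheta_K(\cdots)H_1$, is negligible before $\theta^K_1$. To handle the interaction with the type~1 populations cleanly, I will couple $(N_0,H_0)$ between two auxiliary two-dimensional Markov processes whose rates are obtained by replacing $N_1$ and $H_1$ by $0$ and by $\varepsilon\eta K$ (respectively $\varepsilon\eta K^m$). This monotonicity is not exact, however, because of the predator--prey cross effects, so I would rather apply the excursion estimate directly to the four-dimensional process, using only the bounds on $N_1$ and $H_1$ that hold before $\theta^K_1$.

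\textbf{Main obstacle.} I expect the main difficulty to be the exponential exit estimate with two different scales $K$ and $K^m$. It requires a large-deviation bound for a process whose coordinates are normalized differently. I would handle it with the exponential martingale $\exp(\lambda V(Z)-\int\mathcal{L}\ldots)$, with $\lambda$ of order $K^{m\wedge1}$, and check that the generator expansion remains controlled on the compact set that stays away from the axes.
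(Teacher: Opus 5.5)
Your plan is correct, but it organizes the key estimate differently from the paper. Both arguments use the same Volterra Lyapunov function (yours is $p\rho$ times the paper's) and the same object $e^{\lambda V}$ with $\lambda=aK^{m\wedge1}$. They differ in how the estimate is closed.

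\textbf{The paper's route.} It makes one fixed-time computation on the whole $\varepsilon$-box. It splits $\mathcal{L}^K_{sc}e^{\lambda V}$ into a deterministic part bounded by $-CV\lambda e^{\lambda V}$, plus remainders of size $(\eta\lambda+\lambda^2K^{-(m\wedge1)})e^{\lambda V}$. The $\eta\lambda$ term is not dominated by $-CV\lambda$ near the equilibrium. The paper therefore closes the estimate by a Gronwall argument in the variable $\lambda$, using $\partial_\lambda e^{\lambda V}=Ve^{\lambda V}$, and then applies Markov's inequality at the single time $t_K\wedge\theta^K_1$.

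\textbf{Your route.} You work only on the annulus $\{\ell_1\le V\le\ell_2\}$. There the drift of $V$ has a uniform negative margin, which absorbs:
\begin{itemize}
\item the $O(\varepsilon^2\eta)$ competition perturbations from the type~1 populations,
\item the $O(K^{-v}+K^{-m\vartheta})$ mutation terms,
\item the second-order Taylor term, which is $O(a)\lambda$.
\end{itemize}
So $e^{\lambda V}$ is a genuine supermartingale up to exit from the annulus or $\theta^K_1$. Optional stopping gives a crossing probability of at most $e^{-aK^{m\wedge1}(\ell_2-\ell_1)+O(a)}$ per excursion. A union bound over at most $O((t_K+1)K^{1\vee m})$ excursions then suffices when $\zeta<a(\ell_2-\ell_1)$.

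\textbf{Trade-off.} Your route avoids the Gronwall-in-$\lambda$ step and gives control of the whole path on $[0,t_K\wedge\theta^K_1]$. It costs strong-Markov bookkeeping for the excursions and a harmless polynomial factor. The paper's route needs no level sets or excursion counting, but its differential inequality in $\lambda$ is more delicate.

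\textbf{Two points to tighten.}

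First, the per-excursion bound must come from that supermartingale, not from Champagnat's Theorem~3(c). The Freidlin--Wentzell estimates behind that theorem work at a single speed and are not available off the shelf with the two scales $K$ and $K^m$. Your first bullet (return to $\{V\le\ell_1\}$ within time $O(1)$) is then unnecessary. Proved via Lemma~\ref{lemapprox}, it would only hold with probability tending to one, not with the exponentially small error the union bound needs.

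Second, the hypothesis $\nu\le\sqrt2\varepsilon$ cannot be what places the $\nu$-box inside $\{V\le\ell_1\}\subset\{V\le\ell_2\}\subset\varepsilon$-box. If $\nu>\varepsilon$, the initial state may already lie outside the $\varepsilon$-box. Then $\theta^K_1=0$ and the event occurs with probability one. You need $\nu\le c\,\varepsilon$, with $c<1$ depending on the ratio of the quadratic lower and upper bounds for $V$. The paper's own proof imposes exactly such a condition ($2C\nu^2<C'\varepsilon^2$). So shrinking $\nu$ is mandatory, not optional, and the statement's $\sqrt2\varepsilon$ should be read in that light.
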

This result is usually derived from a comparison of the resident population with logistic birth and death processes. However, since here the prey and predator populations are coupled through a non-monotonic interaction, finding an appropriate coupling is difficult and would require to study four interacting populations. We therefore give an alternative proof using a control on the Lyapunov function in Appendix \ref{app:proof_lyap}.

On the interval $ [0,\theta^K_1] $, we can ``flank'' the processes $ \prey_1 $ and $ \pred_1 $ by pairs of branching processes with immigration, whose growth rates will be only $ \mathcal{O}(\varepsilon) $ apart.

Using these couplings, Theorem \ref{theo:convBPI} and Proposition~\ref{prop:stability_eq}, we obtain the following Lemma.
\begin{lemma} \label{lemma:invasion_phase}
	There exists a constant $ C > 0 $, which does not depend on $ \varepsilon $, such that,
	\begin{align} \label{limit_exponent_H}
	\lim_{K \to \infty} \P\left({ \sup_{t \in [0, \theta^K_1/\log(K)]} \left| Y^K_1(t\log(K)) - \left( y_1^0  + \hat{\varrho} t \right) \vee (1-\vartheta) \right| > C \varepsilon}\right) = 0,
	\end{align}
	and
	\begin{align} \label{limit_exponent_N}
	\lim_{K \to \infty} \P\left(\sup_{t \in [0, \theta^K_1/\log(K)]} \left| X^K_1(t \log(K)) - \left( x_1^0  + \hat{r} t \right) \right| > C \varepsilon \right) = 0.
	\end{align}
	Moreover, there exists a constant $ C' > 0 $ such that
	\begin{align} \label{limit_theta}
		\lim_{K \to \infty} \P\left({\left| \frac{\theta^K_1}{\log(K)} - s_1 \right| > C' \varepsilon}\right) = 0,
	\end{align}
	where $s_1$ is defined in Definition~\ref{def:limit}, and in this case is given by
	\begin{equation*}
	    s_1 = \frac{1 - x_1^0}{\hat{r}}.
	\end{equation*}
\end{lemma}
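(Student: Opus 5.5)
The plan is to follow the standard coupling strategy of \cite{champagnat_stochastic_2021,coquille_stochastic_2021}. We work on the event, of probability tending to one by Proposition~\ref{prop:stability_eq}, on which the resident populations stay $\varepsilon$-close to $(\hat{n},\hat{h})$ up to $\theta^K_1$. On $[0,\theta^K_1]$ we also have $\prey_1\le\varepsilon\eta K$ and $\pred_1\le\varepsilon\eta K^m$.

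\textbf{Rate bounds for the prey.} On this event the per-capita death rate of a type 1 prey is $d+c\prey/K+p\pred_1/K^m$, which lies within $O(\varepsilon)$ of $d+c\hat n$ (using $\eta\le 1$). Its birth rate is $b(1-v_K)$. Immigration into $\prey_1$ comes from mutations of type 0 prey, at rate $bv_K\prey_0\asymp K^{1-v}$.

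\textbf{Rate bounds for the predator.} The per-capita birth rate of a type 1 predator is $(\beta+\rho\prey_1/K)(1-\vartheta_K)$, which lies within $O(\varepsilon)$ of $\beta$. Its death rate lies within $O(\varepsilon)$ of $\delta+\gamma\hat h$. Immigration into $\pred_1$ comes from type 0 predators, at rate $\asymp K^{m(1-\vartheta)}$.

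\textbf{Coupling.} We construct on a common probability space, via Poisson point measures, pairs of branching processes with immigration that sandwich the populations. For the prey we take $A^{K,\pm}$ of type $BPI_K$ with growth rates $\hat r\pm C\varepsilon$, initial value $\lfloor K^{x_1^0\pm C_0\nu}\rfloor$, and constant immigration $K^{1-v}$ (up to constants). These satisfy $A^{K,-}_t\le \prey_1(t)\le A^{K,+}_t$ for $t\le\theta^K_1$. For the predator we take analogous processes with rates $m(\hat\varrho\pm C\varepsilon)$, measured in the $K^m$ scale, and immigration exponent $m(1-\vartheta)$.

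\textbf{Applying Theorem~\ref{theo:convBPI}.} The theorem gives the limiting exponents $(x_1^0+(\hat r\pm C\varepsilon)t)\vee(1-v)$ for the prey. Since $x_1^0>1-v$ and $\hat r>0$, the maximum is just the linear term. For the predator the limit is $(y_1^0+(\hat\varrho\pm C\varepsilon)t)\vee(1-\vartheta)$, after dividing exponents by $m$. Taking the logarithm and using monotonicity of $\log(1+\cdot)$ then yields \eqref{limit_exponent_N} and \eqref{limit_exponent_H}.

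\textbf{Technical conditions.} The hypothesis $c\le\beta$ of Theorem~\ref{theo:convBPI} holds for $\nu$ small, since $1-v<x_1^0-C_0\nu$. The positivity condition on $[0,T]$ holds because of the immigration floor. The edge case $y_1^0=1$ must be handled separately: the predator then starts at order $K^m$ but below $\varepsilon\eta K^m$, and one can take a slightly smaller initial exponent for the lower process. Immigration terms may also vary in time by a factor of $1\pm O(\varepsilon)$, which only perturbs the constant $f$.

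\textbf{The stopping time, \eqref{limit_theta}.} First, $\theta^K_1$ cannot occur because of the resident populations, by Proposition~\ref{prop:stability_eq} with $t_K=T\log K$, which is allowed since $\log(t_K)\ll K^{m\wedge1}$. It cannot occur because of $\pred_1$ either, since $\hat\varrho<0$ keeps its exponent below $1$, hence $\pred_1\ll\varepsilon\eta K^m$. Unless $y_1^0=1$, in which case the assumption together with the negative slope keeps it below the threshold. So $\theta^K_1$ is the first time $\prey_1$ exceeds $\varepsilon\eta K$. On the log scale this is when $X_1^K$ reaches $1+\log(\varepsilon\eta)/\log K\to1$. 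The bounds from the upper and lower couplings then place $\theta^K_1/\log K$ in $\big[(1-x_1^0- C_0\nu)/(\hat r+C\varepsilon),(1-x_1^0+C_0\nu)/(\hat r-C\varepsilon)\big]$ up to $o(1)$. This interval is within $C'\varepsilon$ of $s_1$ since $\nu\le\sqrt2\varepsilon$.

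\textbf{Main obstacle.} The main obstacle is the sandwich coupling itself. The prey death rate depends on $\pred_1$ and the predator birth rate on $\prey_1$, so the two minority populations are mutually coupled, and the process is not monotone. The bounds $\prey_1\le\varepsilon\eta K$ and $\pred_1\le\varepsilon\eta K^m$, built into $\theta^K_1$, are exactly what turn these cross-effects into $O(\varepsilon)$ perturbations of the rates. This allows each minority population to be flanked separately by processes driven by the same Poisson measures with thinned or augmented intensities.

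Two further points need care: the application of Theorem~\ref{theo:convBPI} up to a random time, which works because uniform convergence on the deterministic horizon $[0,s_1+1]$ suffices; and the time-inhomogeneous immigration, which is handled by bounding it between constant rates $(1\pm C\varepsilon)K^{1-v}\hat n b$.
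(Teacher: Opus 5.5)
Your proposal is correct and follows essentially the paper's route. The paper also uses rate bounds on $[0,\theta^K_1]$, sandwiches $\prey_1$ and $\pred_1$ between branching processes with immigration whose growth rates are $\mathcal{O}(\varepsilon)$ apart, applies Theorem~\ref{theo:convBPI} to their exponents, and locates $\theta^K_1$ using Proposition~\ref{prop:stability_eq} together with the couplings. You are in fact slightly more careful than the written proof: it only establishes $\theta^K_1/\log(K)\ge s_1-C'\varepsilon$, and leaves implicit both the matching upper bound (from the lower coupling) and the need to keep $\theta^K_1/\log(K)$ within a bounded horizon when applying Theorem~\ref{theo:convBPI}.
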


\begin{proof}
\noindent
Recall that $\hat{r} > 0$ and $\hat{\varrho} < 0$.

\noindent \underline{Step 1: Control of the predator population.} We start by considering the small predator population $ \pred_1 $.
We note that for $ t \in [0,\theta^K_1] $ and $ K $ large enough so that the mutation rate $ \vartheta_K=K^{-\vartheta} \leq \varepsilon $,
\begin{multline*}
(1-\varepsilon) \beta \pred_1(t) + (\beta + \rho(\hat{n}-\varepsilon))(\hat{h}-\varepsilon) K^{(1-\vartheta)m} \leq \text{ birth rate of $ \pred_1 $ } \\
\leq (\beta + \rho \eta \varepsilon) \pred_1(t) + (\beta + \rho (\hat{n}+\varepsilon)) (\hat{h} + \varepsilon) K^{(1-\vartheta)m},
\end{multline*}
and 
\begin{align*}
(\delta + \gamma({\hat{h}}-\varepsilon)) \pred_1(t) \leq \text{ death rate of $ \pred_1 $ } \leq (\delta + \gamma ({\hat{h}} + \varepsilon +\eta\varepsilon)) \pred_1(t).
\end{align*}
As a result, there exist two branching processes with immigration $ (\tilde{H}^{K,-}_1(t), t \geq 0) $ and $ (\tilde{H}^{K,+}_1(t), t \geq 0) $ with birth rates and death rates given in Table~\ref{table:rates_H_N_tilde} such that, for all $ t \in [0,\theta^K_1] $,
\begin{align*}
\tilde{H}^{K,-}_1(t) \leq \pred_1(t) \leq \tilde{H}^{K,+}_1(t),
\end{align*}
almost surely.
Here, $ \beta - (\delta + \gamma {\hat{h}}) = -\rho {\hat{n}} < 0 $, so $ \beta - (\delta+ \gamma {\hat{h}}) - (\beta + (1+\eta)\gamma) \varepsilon < 0 $ and $ \varepsilon $ can be chosen small enough that $ \beta - (\delta + \gamma {\hat{h}}) + (\rho \eta + \gamma) \varepsilon < 0 $, so that both $ (\tilde{H}^{K,-}_1(t), t \geq 0) $ and $ (\tilde{H}^{K,+}_1(t), t \geq 0) $ are subcritical.
By Theorem \ref{theo:convBPI}, we then have
\begin{align*}
\lim_{K\to\infty} \frac{\log(1+\tilde{H}^{K,-}_1(t \log(K)))}{m \log(K)} = \left( y_1^0 - C_0\varepsilon - \frac{1}{m}(\rho \hat{n} +(\beta + (1+\eta)\gamma)\varepsilon) t \right) \vee (1-\vartheta),
\end{align*}
and
\begin{align*}
\lim_{K\to\infty} \frac{\log(1+\tilde{H}^{K,+}_1(t \log(K)))}{m \log(K)} = \left( y_1^0+C_0\varepsilon - \frac{1}{m}(\rho \hat{n} - (\rho \eta + \gamma)\varepsilon) t \right) \vee (1-\vartheta),
\end{align*}
in $ L^\infty([0,T]) $, in probability.
This implies Eq.~\eqref{limit_exponent_H}.\medskip

\begin{table}[h]
    \centering
    \renewcommand{\arraystretch}{2}
    \begin{tabularx}{\textwidth}{| l | X | X | X | X |}
    \hline
        \textbf{Process} & \textbf{Initial condition} & \textbf{Immigration rate} & \textbf{Per capita birth rate} & \textbf{Per capita death rate} \\
    \hline
        $ \tilde{H}^{K,-}_1 $ & $\lfloor K^{m(y^0_1 - C_0 \nu)} - 1 \rfloor$ & $a_- K^{(1-\vartheta) m}$ & $ (1-\varepsilon) \beta $ & $ \delta + \gamma ({\hat{h}} + (1+\eta) \varepsilon) $ \\
    \hline
        $ \tilde{H}^{K,+}_1 $ & $\lfloor K^{m(y^0_1 + C_0 \nu)} - 1 \rfloor $ & $a_+ K^{(1-\vartheta) m}$ & $ \beta + \rho \eta \varepsilon $ & $ \delta + \gamma ({\hat{h}}-\varepsilon) $ \\
    \hline
        $\tilde{N}^{K,-}_1$ & $\lfloor K^{(x_1^0-C_0\nu)}-1\rfloor$ & $f_- K^{1-v}$ & $(1-\varepsilon) b  $ & $d + c \hat{n} + ((1+\eta)c + p\eta) \varepsilon$ \\
    \hline
        $\tilde{N}^{K,+}_1$ & $\lfloor K^{(x_1^0+C_0\nu)}-1\rfloor$ & $f_+ K^{1-v}$ & $b  $ & $d + c \hat{n} - c \varepsilon $ \\
    \hline
    \end{tabularx}
    \caption{Parameters of the branching processes with immigration used to control the size of the two mesoscopic populations $H^K_1$ and $N^K_1$. The constants $a_-$, $a_+$, $f_-$ and $f_+$ are given by $a_- = (\beta + \rho ({\hat{n}}-\varepsilon))({\hat{h}}-\varepsilon)$, $a_+ = (\beta + \rho ({\hat{n}} + \varepsilon))({\hat{h}}+\varepsilon)$, $f_- = b(\hat{n}-\varepsilon)$ and $f_+ = b(\hat{n} + \varepsilon)$.}
    \label{table:rates_H_N_tilde}
\end{table}

\noindent\underline{Step 2: Control of the prey population.} We now turn to the invading prey population $ \prey_1 $.
For $ t \in [0,\theta^K_1] $ and $ K $ large enough so that $ v_K \leq \varepsilon $,
\begin{align*}
(1-\varepsilon) b \prey_1(t) + b (\hat{n}-\varepsilon) K^{1-v} \leq \text{ birth rate of $ \prey_1 $ } \leq b \prey_1(t) + b (\hat{n} + \varepsilon) K^{1-v},
\end{align*}
and
\begin{align*}
(d + c(\hat{n}-\varepsilon)) \prey_1(t) \leq \text{ death rate of $ \prey_1 $ } \leq (d + c(\hat{n} + (1+\eta)\varepsilon) + p \eta \varepsilon) \prey_1(t).
\end{align*}
As a result, there exist two branching processes with immigration $ (\tilde{N}^{K,-}_1(t), t \geq 0) $ and $ (\tilde{N}^{K,+}_1(t), t \geq 0) $ with birth rates and death rates given in Table~\ref{table:rates_H_N_tilde} such that, for all $ t \in [0,\theta^K_1] $,
\begin{align*}
\tilde{N}^{K,-}_1(t) \leq N^{K}_1(t) \leq \tilde{N}^{K,+}_1(t),
\end{align*}
almost surely.
We note that $ b-d-c\hat{n} = p \hat{h} > 0 $ so $ b-d-c\hat{n} + c \varepsilon > 0 $ and $ \varepsilon $ can be chosen small enough so that $ b-d-c\hat{n} - (b + (1+\eta)c + p\eta) \varepsilon > 0 $.
Then, by Theorem~B.1 in \cite{champagnat_stochastic_2021} (see also their Lemma~B.4),
\begin{align*}
\lim_{K \to \infty} \frac{\log( 1 + \tilde{N}^{K,-}_1(t\log(K)) )}{\log(K)} = (x_1^0-C_0\varepsilon)  \vee (1-v) + (p \hat{h} - (b + (1+\eta)c + p\eta)\varepsilon) t,
\end{align*}
and
\begin{align} \label{limit_exponent_N_tilde}
\lim_{K \to \infty}\frac{\log(1+ \tilde{N}^{K,+}_1(t\log(K)))}{\log(K)} = (x_1^0+C_0\varepsilon) \vee (1-v) + (p\hat{h} + c\varepsilon) t,
\end{align}
in $ L^\infty([0,T]) $, in probability.
This implies Eq.~\eqref{limit_exponent_N}.\medskip
		
\noindent\underline{Step 3: Proof of Eq.~\eqref{limit_theta}.}
Let us consider $C'>0$ to be chosen later.
Let us set $ s_1 = \frac{1 - x_1^0 \vee (1-v)}{p \hat{h}} $ and $ t_K = (s_1 - C'\varepsilon) \log(K) $  and note that
\begin{align*}
	\left\lbrace \frac{\theta^K_1}{\log(K)} < s_1 - C' \varepsilon \right\rbrace \subset \bigcup_{i=1}^4 E_i,
\end{align*}
with
\begin{align*}
&E_1 = \left\lbrace | \prey_0(t_K \wedge \theta_1^K) - \hat{n} K | > \varepsilon K \right\rbrace, \\
&E_2 = \left\lbrace | \pred_0(t_K \wedge \theta_1^K) - \hat{h} K^m | > \varepsilon K^m \right\rbrace, \\
&E_3 = \left\lbrace \pred_1(t_K \wedge \theta_1^K) > \varepsilon \eta K^m \right\rbrace, \\
&E_4 = \left\lbrace \prey_1(t_K \wedge \theta_1^K) > \varepsilon \eta K \right\rbrace.
\end{align*}

By Proposition~\ref{prop:stability_eq}, $ \P(E_i) \to 0 $ as $ K \to \infty $ for $ i \in \lbrace 1, 2 \rbrace $.
Now note that, by the fact that $ N_1^K(t\wedge \theta_1^K) \leq \tilde{N}^{K,+}_1(t\wedge \theta_1^K) $,
\begin{align*}
\left(	E_4 \cap \bigcap_{i=1}^3 E_i^c \right) &\subset \left\lbrace \tilde{N}^{K,+}_1(t_K \wedge \theta^K_1) > \varepsilon \eta K \right\rbrace.
\end{align*}
But we see that
\begin{align*}
	\tilde{N}^{K,+}_1(t_K \wedge \theta^K_1) > \varepsilon \eta K \quad \Rightarrow \quad \frac{\log(1+\tilde{N}^{K,+}_1(t_K \wedge \theta^K_1))}{\log(K)} > \frac{\log(1+\varepsilon \eta K)}{\log(K)} = 1 + o(1).
\end{align*}
But by \eqref{limit_exponent_N_tilde},
\begin{align*}
	\limsup_{K \to \infty} \sup_{t \in [0, s_1 - C' \varepsilon]} \frac{\log(1+\tilde{N}^{K,+}_1(t \log(K)))}{\log(K)} \longrightarrow_{K\to\infty} (x_1^0+C_0\varepsilon) \vee (1-v) + (p\hat{h} + c\varepsilon) (s_1 - C'\varepsilon),
\end{align*}
in probability as $K\to\infty$.
Substituting the expression for $ s_1 $, we note that
\begin{align*}
	(x_1^0+C_0\varepsilon) \vee (1-v) + (p\hat{h} + c\varepsilon) (s_1 - C'\varepsilon) &\le 1 - \varepsilon \left( C'(p\hat{h} + c \varepsilon) - c s_1 -C_0\right).
\end{align*}
Hence choosing $ C' > \frac{c s_1+C_0}{p\hat{h}} $ ensures that $ \P(E_4 \cap \bigcap_{i=1}^3 E_i^c) \to 0 $ as $ K \to \infty $ for $ \varepsilon > 0$ small enough.
We prove that $ \P(E_3 \cap_{i \neq 3} E_i^c ) \to 0 $ in the same way, and this concludes the proof of the lemma.
\end{proof}

In view of Proposition~\ref{prop:stability_eq} and Lemma~\ref{lemma:invasion_phase}, at time $\theta^K_1$, with high probability,
\begin{align*}
    \left| \frac{1}{K} \prey_0(\theta^K_1) - \hat{n} \right| \leq C \varepsilon, && \left| \frac{1}{K^m} \pred_0(\theta^K_1) - \hat{h} \right| \leq C \varepsilon, && \prey_1(\theta^K_1) \geq \varepsilon \eta K,
\end{align*}
and
\begin{equation*}
    Y^K_1(\theta^K_1) \leq y_1(s_1) + C \varepsilon,
\end{equation*}
where the right hand side is strictly smaller than 1 for $\varepsilon$ sufficiently small.
Thus, we obtain
\begin{equation*}
    \sup_{t \in [0,T]} \left\| Z^K_{sc}(\theta^K_1 + t) - z(t) \right\| \to 0
\end{equation*}
in probability as $K \to \infty$ for any $T$, where $z$ solves \eqref{eq:syst-4} with initial condition given by
\begin{align*}
    n_0(0) = \hat{n}, && h_0(0) = \hat{h}, && n_1(0) = \eta\varepsilon, && h_1(0) = 0.
\end{align*}

By Proposition~\ref{prop:cv_3types}, $z(t)$ converges to the non-matching types equilibrium $\bar{z}_{1,0}$ as $t \to \infty$,
\begin{equation*}
    \lim_{t \to \infty} z(t) = (0, \bar{n}, \bar{h}, 0) = \bar{z}_{1,0}.
\end{equation*}
Let us now set
\begin{equation*}
    \sigma^K_1 = \inf \left\lbrace t \geq \theta^K_1 : \left\| Z^K_{sc}(t) - \bar{z}_{1,0} \right\| \leq \varepsilon \text{ and } \prey_0(t) \leq \eta \varepsilon K \right\rbrace.
\end{equation*}
Then there exists $T > 0$ (depending on $\varepsilon$ and on all the other constants but not on $K$) such that, with probability converging to $1$ as $K\to\infty$, $\sigma^K_1 \leq \theta^K_1 + T$.
Moreover, on this event, $\lim_{K \to \infty} X^K_0(\sigma^K_1) = 1$ and there exists $C_1 > 0$ which does not depend on $\varepsilon$ such that, with probability converging to $1$ as $K\to\infty$
\begin{equation*}
    \left| Y^K_1(\sigma^K_1) - y_1(s_1) \right| \leq C_1 \varepsilon.
\end{equation*}

Then, after $\sigma^K_1$, we can proceed by induction as in \cite{coquille_stochastic_2021}.
In this case, the resident populations are in the non-matching types equilibrium, and the type 1 predator invades (with a positive growth rate given by $\bar{\varrho}$) while the type 0 prey population is decaying (with a negative growth rate given by $\bar{r}$).
In this case, the analogue of Proposition~\ref{prop:stability_eq} is a direct application of Lemma~A.5 in \cite{coquille_stochastic_2021}, as the two resident populations behave approximately like independent logistic birth-death processes.

At the end of the next invasion phase, $Z^K_{sc}$ will again be in an $\varepsilon$-neighbourhood of the matching types equilibrium (this time with type 1 populations), and the induction can continue.
\medskip

In view of the proof of {Proposition ~\ref{prop:invasionTime_contraction}}, at each new invasion time $s_k$, all the exponents are getting closer to 1.
It thus seems natural to expect that, after $S_*$, all these exponents are converging to 1, and that the four populations are close to their coexistence equilibrium. However, we were unable to prove that solutions of the four dimensional dynamical system starting from initial conditions of the form $(K\hat{n}, K^{1-\varepsilon},K^m\hat{h}, K^{(1-\varepsilon)m})$ reach a neighborhood of the coexisting equilibrium $\zbf^*$ in a time at most $O(\varepsilon\log(K))$. The main difficulty here comes from the symmetry of the system which makes the Lyapunov function degenerate in the sense that its derivative admits two null eigenvalues. We managed to prove the convergence after the accumulation time in a slightly modified setting presented in Section \ref{subsec:accumulation_point}.

\section{Description and proofs for cases B and C}

In this section we detail the elements of the proof that need to be modified for cases B and C (cases D and E being treated analogously in Appendix C). The only difference compared with the previous section is that the macroscopic populations will be either two predators (in cases B and C) or two prey (in cases D and E) with varying proportions. As we will see, the equilibria of the corresponding deterministic system (with two prey or two predators) are not hyperbolic, since any combination of the two types with a total size equal to the carrying capacity is an equilibrium. Handling these non-hyperbolic equilibria leads to difficulties (see \cite{coron2021emergence}). Below we only detail cases B and C since the two other cases are very similar. They correspond to a parameter range where we observe a coexistence at macroscopic scales of the two prey and successive invasions of predators.

\subsection{Case B }
\label{sec:B}

In case B, predators are autonomous ($\beta-\delta>0$), but the matching types equilibrium does not exist ($\hat{n}<0$), which implies that prey have negative invasion fitness if their matching predator is at their intraspecific carrying capacity $\bar{h}$. However, a prey species can invade if its matching predator population is not too abundant. More precisely, we will observe a coexistence of the two predator types at the macroscopic level. The associated equilibria for the predators will be of the form
$\{ (\alpha \bar{h}, (1-\alpha)\bar{h}) , \alpha\in[0,1]\}$.
As described in Proposition \ref{prop:syst-3}, the invasion of a prey will be possible if the proportion of the matching predator is below a critical proportion $\alpha_c$ defined in \eqref{eq:alpha_c_first}.
A prey invasion attempt will induce a change in predator proportions as described in Proposition~\ref{prop:syst-3}~$ii)$ leading to the invasion of the other prey population.
We note that in case B we assumed $n^* > 0$ which is equivalent to $\bar{h} \leq 2 \frac{c \bar{n}}{p}$ and to $\alpha_c>1/2$. 
The stability of the four-species equilibrium therefore implies that this critical proportion of predators $\alpha_c$ is larger than 1/2 (compare to Eq.~\eqref{eq:cond_4_types}).
\paragraph{Description of the limiting process}
We now describe more precisely the limiting process introduced in Definition \ref{def:limit}. A typical trajectory of Case B is shown in Figure \ref{fig:caseB}.

Recall that we assume that the initial state is a non-matching equilibrium (the matching one is not positive in case B). To fix ideas let us consider that at initial time $\widetilde{\bm{z}}_0=(\bar{n},0,0,\bar{h})$, $x_0(0)= y_1(0)=1$ and  $ x_1(0), y_0(0)\in(0,1)$. 
In the first time interval, $t\in [0,s_1]$, the predator $0$ invades with slope 
$$\varrho_0(\widetilde{\bm{z}}_0) = \rho \bar{n}>0 ,$$
while prey individuals of type $1$ decay with a slope 
$$r_1(\widetilde{\bm{z}}_0)= -p\bar{h}<0.$$
Since the type $0$ population is of order $K$ it produces $O(K^{1-v})$ prey of type $1$ by mutation. Therefore on $[0,s_1]$
\[
x_1(t) = \left(x_1(0)+r_1(\widetilde{\bm{z}}_0)t\right) \,\vee \, \left(1-v\right)
\]
and 
\begin{align*}
     y_0(t) &=y_0(0)+\varrho_0(\widetilde{\bm{z}}_0)t.
\end{align*}
The time $s_1$ is then defined as 
$$s_1=\inf\{t\ge0, y_0(0)+\varrho_0(\widetilde{\bm{z}}_0)t=1\}.$$
At the time $s_1$, the prey of type $0$ and both predator populations have macroscopic size. 
We deduce from Proposition \ref{prop:syst-3} that the next equilibrium is of the form $(0,0,\kappa_0(0)\bar{h}, (1-\kappa_0(0))\bar{h})$. Note that by definition $\kappa_0(0)>\alpha_c>1/2$ in case B, such that the predator $0$ is most abundant.

From this step on, we will observe successive prey invasions corresponding to the type of the less abundant predator. We will denote by $\alpha_n$ the proportion of predator $0$ at time $s_n$. Let us now detail the first step.
With this notation, $\alpha_1=\kappa_0(0)>\alpha_c>1/2$ and $\widetilde{\bm{z}}_1=(0,0,\alpha_1\bar{h},(1-\alpha_1)\bar{h}$. 
Note that as a consequence $1-\alpha_1<1-\alpha_c < \alpha_c$. 
Therefore, on the interval $[s_1,s_2]$ prey of type $1$ invade with a slope $$r_1(\widetilde{\bm{z}}_1)= b-d-p(1-\alpha_1)\bar{h}>0,$$
while prey individuals of type $0$ decay with a slope 
$$r_0(\widetilde{\bm{z}}_1)= b-d-p\alpha_1\bar{h}<0.$$
Note that here the mutation effect changes since the type $i$ population is of order $K$ it produces $K^{x_i(t)-v}$ prey of type $1-i$ by mutations. Therefore for $t \in [s_1,s_2]$
\begin{align*}
    x_1(t) &= \left(x_1(s_1)+r_1(\widetilde{\bm{z}}_1)(t-s_1)\right) \,\vee \, \sup_{s \in [s_1, t] : x_0(s) > v} \lbrace x_0(s) - v + r_1(\widetilde{\bm{z}}_1)(t-s) \rbrace, \\
    x_0(t) &= \left(1+r_0(\widetilde{\bm{z}}_1)(t-s_1)\right) \,\vee \, \sup_{s \in [s_1, t] : x_1(s) > v} \lbrace x_1(s) - v + r_0(\widetilde{\bm{z}}_1)(t-s) \rbrace, 
\end{align*}
which can be simplified to 
\begin{align*}
x_1(t) &= \left(x_1(s_1)+r_1(\widetilde{\bm{z}}_1)(t-s_1)\right) \,\vee \, \left(x_0(t)-v\right),\\
x_0(t) &= \left(1+r_0(\widetilde{\bm{z}}_1)(t-s_1)\right) \,\vee \, \left(x_1(t)-v\right).
\end{align*}

Since, $r_0(\widetilde{\bm{z}_1})<0$ the time $s_2$ is then defined as $$s_2=\inf\{t\ge s_1\,, x_1(s_1)+r_1(\widetilde{\bm{z}}_1)(t-s_1)=1\}.$$
At that time the next resident equilibrium is given by considering the long time behaviour of a dynamical system with the type $1$ prey and the two predators, which lead to a change in proportion of the predators, namely the new proportion of predators of type $1$ is given by $\kappa_0(1-\alpha_1)>\alpha_c$ and we have
$$\widetilde{\bm{z}_2}=(0,0,(1-\kappa_0(1-\alpha_1))\bar{h}, \kappa_0(1-\alpha_1)\bar{h})\, ,$$
following from Proposition~\ref{prop:syst-3}~$ii)$.
Consequently, the new predator $0$ proportion is $\alpha_2=1-\kappa_0(1-\alpha_1)<1-\alpha_c<1/2$ in case B.

\begin{figure}[h!]
\begin{center}
\includegraphics[scale=1.4]{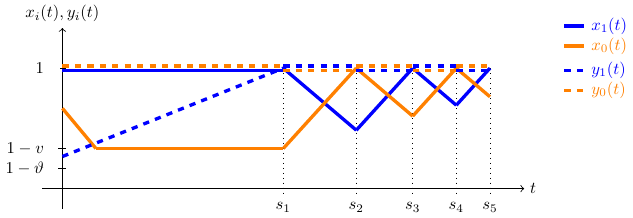}\\
\includegraphics[scale=0.3]{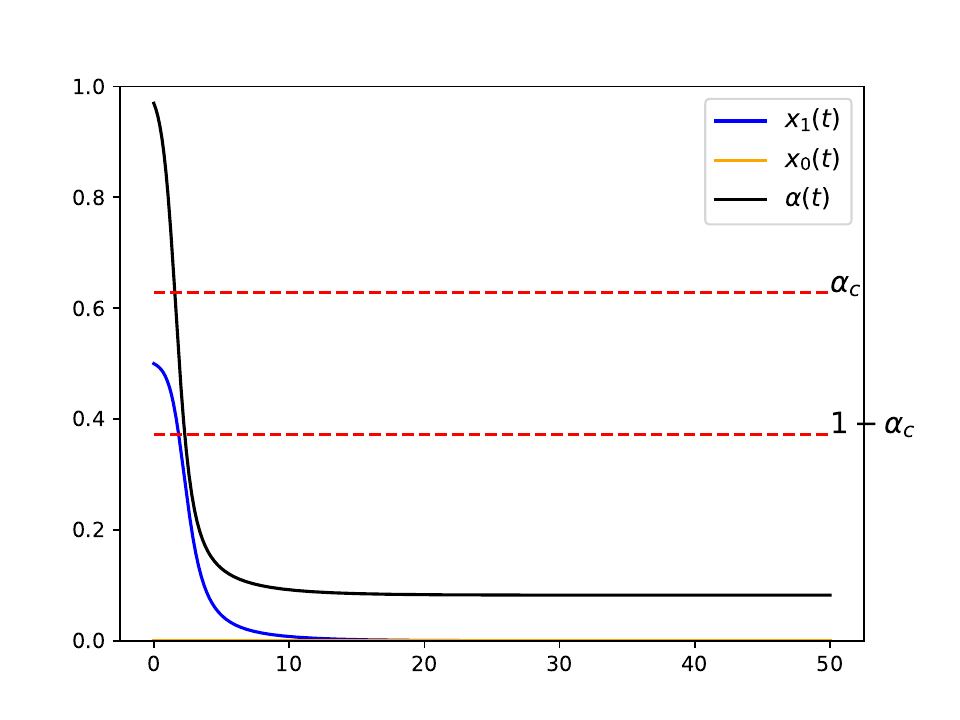}
\includegraphics[scale=0.3]{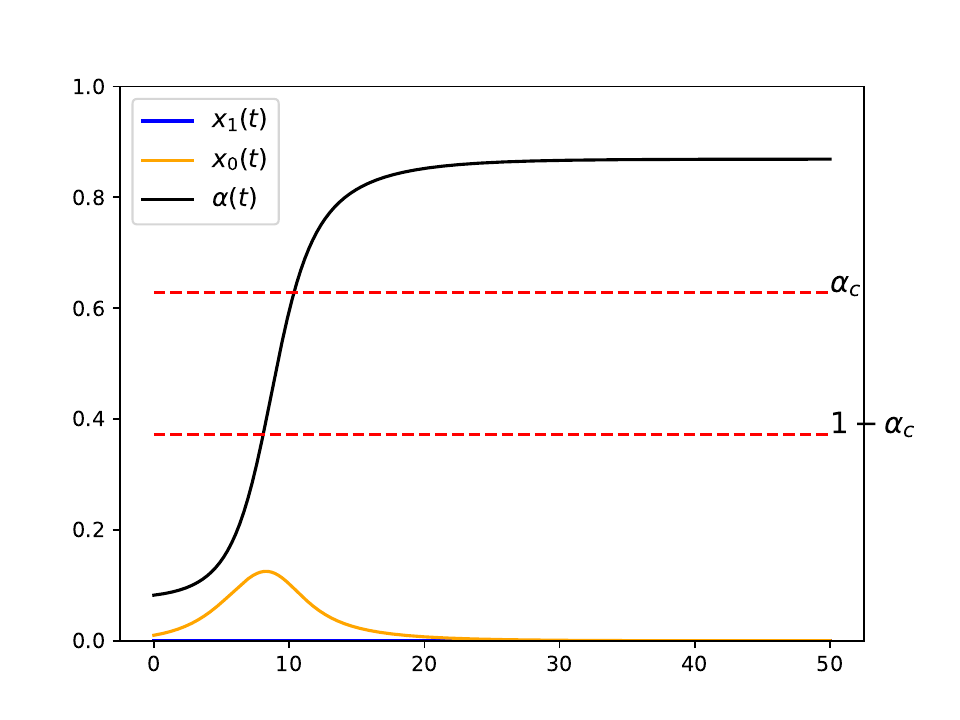}
\includegraphics[scale=0.3]{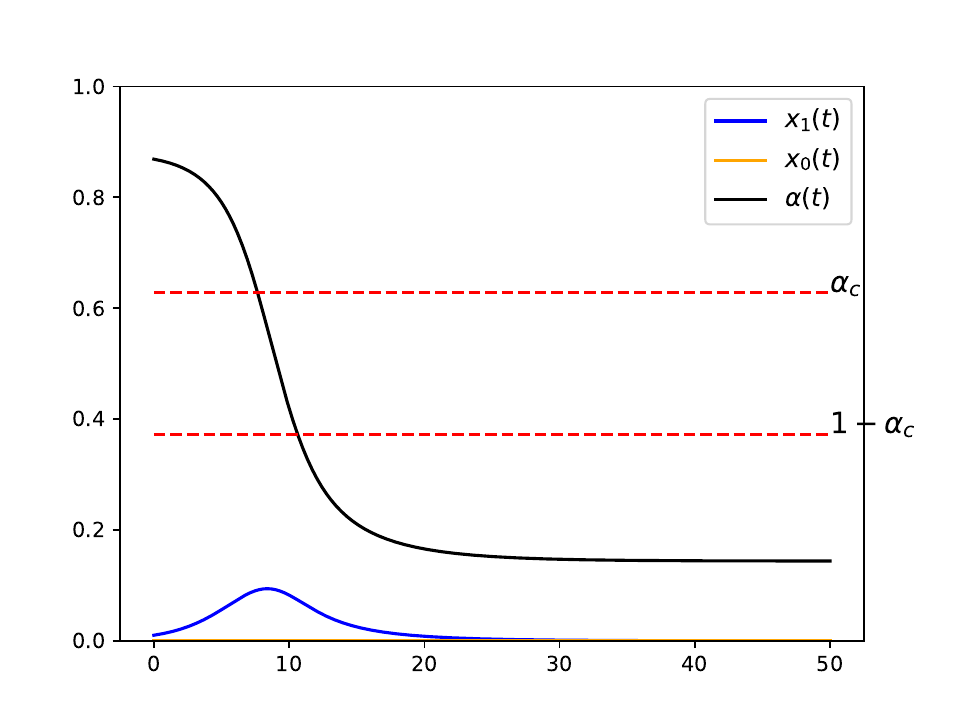}
\end{center}
\caption{Case B. The above Figure represents a typical trajectory of the limiting process $(x_0,x_1,y_0,y_1)$ in case B. The three graphs in the second line show the solutions of the dynamical system associated with the macroscopic populations at time $s_1$, $s_2$, $s_3$, where $\alpha(t)$ stands for the proportion of type $0$ predators $\alpha(t)=h_0(t)/(h_0(t)+h_1(t))$.} 
\label{fig:caseB}
\end{figure}

Following a similar argument, we can write the sequence of equilibria, 
$$\widetilde{\bm{z}}_n=(0,0,\alpha_n \bar{h}, (1-\alpha_n)\bar{h})$$
with 
$$\alpha_{2n}<1-\alpha_c<\alpha_c, \qquad \alpha_{2n+1}>\alpha_c.$$
As a consequence, the slopes driving successive prey invasions satisfy $
r_0(\widetilde{\bm{z}}_{2n})<0$, $r_0(\widetilde{\bm{z}}_{2n+1})>0$ and
$r_1(\widetilde{\bm{z}}_{2n})>0$, $r_1(\widetilde{\bm{z}}_{2n+1})<0$, and we have for all 
$t\in[s_{2n}, s_{2n+1}]$
\begin{align*}
      x_0(t) &= \left(x_0(s_{2n})+r_0(\widetilde{\bm{z}}_{2n})(t-s_{2n})\right)\,,\\
      x_1(t) &= \left(x_1(s_{2n})+r_1(\widetilde{\bm{z}}_{2n})(t-s_{2n})\right) \,\vee \, \left(x_0(t)-v\right)\, ,
\end{align*}
and for all $t\in[s_{2n+1}, s_{2n+2}]$ 
\begin{align*}
      x_0(t) &= \left(x_0(s_{2n+1})+r_0(\widetilde{\bm{z}}_{2n+1})(t-s_{2n+1})\right) \,\vee \, \left(x_1(t)-v\right)\,,\\
     x_1(t) &= \left(x_1(s_{2n+1})+r_1(\widetilde{\bm{z}}_{2n+1})(t-s_{2n+1})\right) \,.
\end{align*}

\paragraph{Accumulation of invasion times}
As in case A, we will prove that the sequence $s_n$ accumulates to a finite time and that $\lim_n x_i(s_n)=1$ for $i\in\{0,1\}$.\\
From the above construction we know that $(x_0(s_n),x_1(s_n))\in \{1\}\times[1-v,1) \cup [1-v,1)\times\{1\}$. It is then sufficient to prove that on any interval $[s_n,s_{n+1}]$ the positive slope is always larger than the negative one. 
For any $n\ge1$, on $[s_n, s_{n+1}]$ the larger predator proportion is $\alpha>\alpha_c$, such that $1-\alpha_c<1-\alpha$. Let us write the positive slope $\lambda_+$ and the negative one $-\lambda_-$. From the definition of $\alpha_c$ we obtain
\[ \lambda_-=p\bar{h}(\alpha-\alpha_c), \qquad \lambda_+ = p\bar{h}(\alpha_c+\alpha-1). \]
We deduce that 
$$\frac{\lambda_+}{\lambda_-} = 1+\frac{2\alpha_c-1}{\alpha-\alpha_c} \ge 1+\frac{2\alpha_c-1}{1-\alpha_c} >1.$$
This yields the following Proposition. 
\begin{prop}
\label{prop:accumulation_B}
We assume case B where $\beta-\delta>0$ and $n^*>0$. Then for both types $i\in\{0,1\}$, for any $n\ge0$ and for all $t\in[s_n,s_{n+1}]$, $x_i(t)>0$. Furthermore we have that $\lim_{n\to\infty} s_n<\infty$ and $\lim_{n\to\infty} x_i(s_{n})=1$ as $n\to\infty$.
\end{prop}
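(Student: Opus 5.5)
The plan is to track the pair $(x_0(s_n),x_1(s_n))$ along the piecewise-linear dynamics described above. First, by induction on $n$, we establish the structural invariant already noted in the construction. For $n\ge1$ we have $(x_0(s_n),x_1(s_n))\in\{1\}\times[1-v,1)\cup[1-v,1)\times\{1\}$, and on $[s_n,s_{n+1}]$ the macroscopic equilibrium is $\widetilde{\bm z}_n=(0,0,\alpha_n\bar h,(1-\alpha_n)\bar h)$ with $\max(\alpha_n,1-\alpha_n)>\alpha_c$. The induction uses Proposition~\ref{prop:syst-3}~$ii)$: it gives $\kappa_0(\cdot)>\alpha_c$, so after each prey invasion the matching predator of the invading prey exceeds the critical proportion. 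Hence on each interval exactly one prey has positive slope $\lambda_+=p\bar h(\alpha_c+\alpha-1)$, the one matching the rarer predator, and the other has negative slope $-\lambda_-=-p\bar h(\alpha-\alpha_c)$, where $\alpha=\max(\alpha_n,1-\alpha_n)$.

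Positivity of $x_i$ follows directly from the mutation floor. The resident prey (exponent $1$ at $s_n$) decreases, while the invader increases from a value $\ge 1-v$. The formulas of Definition~\ref{def:limit} give $x_{\rm dec}(t)\ge x_{\rm inc}(t)-v\ge 1-2v$ when that term is active. More simply, $x_{\rm dec}(t)\ge\max(1-\lambda_-(t-s_n),\,x_{\rm inc}(t)-v)$, and the increasing exponent stays $\ge1-v>0$. One could worry that the decreasing exponent hits $0$ before $s_{n+1}$. This is excluded once we show that the decrease over the interval is at most the increase, which is at most $v<1$. So positivity will come together with the main estimate.

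The key quantitative step is as follows. Let $\Delta_n=s_{n+1}-s_n=(1-x_{\rm inc}(s_n))/\lambda_+$. The decreasing prey drops by at most $\lambda_-\Delta_n$, so the deficit $1-x_{\rm dec}(s_{n+1})\le(\lambda_-/\lambda_+)(1-x_{\rm inc}(s_n))$. The identity displayed before the proposition gives the uniform ratio bound
\[\frac{\lambda_-}{\lambda_+}\le q:=\Bigl(1+\frac{2\alpha_c-1}{1-\alpha_c}\Bigr)^{-1}<1,\]
valid for every $\alpha\in(\alpha_c,1]$ since $\alpha_c>1/2$ in case B. The decreasing prey at step $n$ is the invader at step $n+1$. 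Writing $D_n:=1-\min(x_0(s_n),x_1(s_n))\le v$, we therefore get $D_{n+1}\le qD_n$, hence $D_n\le q^{n-1}D_1$. This shows $x_i(s_n)\to1$. It also gives $\Delta_n=D_n/\lambda_+$, and here one needs a lower bound on $\lambda_+$: we use $\lambda_+\ge p\bar h(2\alpha_c-1)>0$ uniformly. Then $\sum_n\Delta_n\le D_1/(p\bar h(2\alpha_c-1)(1-q))<\infty$, so $\lim s_n<\infty$. The decrease on each interval is at most $D_n\le v$, which finishes positivity.

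The main obstacle I expect is the interaction with the mutation floor $\vee(x_{\rm dec}-v)$ terms. These can only raise the exponents, so the bounds above, which are lower bounds on $x$ and upper bounds on deficits, remain valid. One must still check that these terms never create a second exponent reaching $1$, which would trigger a stopping condition for $\tau_*$. This holds since each is at most $1-v$ relative to the other exponent. A further point is that the first interval $[0,s_1]$, a predator invasion, is handled separately: $x_1$ stays $\ge1-v$ there, so the recursion starts at $n=1$ with $D_1\le v$.
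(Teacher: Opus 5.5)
Your proposal is correct and follows the paper's argument: the invariant that at each $s_n$ one prey exponent equals $1$ and the other lies in $[1-v,1)$, combined with the uniform slope-ratio bound $\lambda_-/\lambda_+\le(1-\alpha_c)/\alpha_c<1$ (valid since $\alpha_c>1/2$ in case B), gives geometric decay of the deficits. You also make explicit two points the paper leaves implicit: the uniform lower bound $\lambda_+\ge p\bar{h}(2\alpha_c-1)>0$ needed to sum the interval lengths $s_{n+1}-s_n$, and the check that the decreasing exponent drops by at most $qv<1$ on each interval, which gives positivity.
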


\paragraph{Convergence of the exponents}

The proof of convergence of the exponents follows similar steps as in case A: we first prove that the resident population remains close to equilibrium in order to compare microscopic populations with branching processes. Here, from the second invasion stage onwards, an additional difficulty arises from the fact that the dynamical system governing the densities of the two predators allows for an infinite number of equilibria. In particular, we need to prove that during a prey-invasion, the proportions of the two predator types will not vary significantly. 
In this section, we detail how the results from \cite{coron2021emergence} can be adapted to our case. Once these results are obtained, the comparison of microscopic populations with branching processes with immigration are similar to case A.

We provide below an analogue of Proposition \ref{prop:stability_eq}. Let us focus on a case where the initial time corresponds to the time $s_1$, the first time where the two predators have macroscopic size with a fixed proportion. More precisely, we assume that there exists $\alpha\in(0,\alpha_c)$ such that
\begin{align}
    \left| \frac{1}{K^m} \pred_0(0) - \alpha\bar{h} \right| \leq \nu, && \left| \frac{1}{K^m} \pred_1(0) -(1-\alpha) \bar{h} \right| \leq \nu,
    \label{condini_B1}
\end{align}
and
\begin{align}
    \left| X^K_0(0) - x_0^0 \right| \leq C_0 \, \nu, && \left| X^K_1(0) - x_1^0 \right| \leq C_0 \, \nu.
    \label{condini_B2}
\end{align}
To state these results rigorously, let us define different stopping times. The first one accounts for the growth of the prey population:
$$T_\varepsilon=\inf\{ t\ge0, \exists i\in\{0,1\}, N^K_i(t)=\lfloor \varepsilon K\rfloor\},$$
for $i\in\{0,1\}$.
The second one gives the first time when the proportions of type $0$ predators deviate considerably from their starting values: for any $\varepsilon>0$, 
\begin{equation}\label{defUeps}
 U_\varepsilon= \inf \left\{t\geq 0, \left|\frac{H^K_{0}(t)}{H^K(t)}-\frac{H^K_0(0)}{H^K(0)}\right|>\varepsilon \right\}.
\end{equation}
 The last one concerns the total predator population size: for any $\varepsilon>0$,
\begin{equation}\label{defReps}
 R_\varepsilon= \inf \left\{t\geq 0, \left|\frac{H^K(t)}{K^m}-\bar{h}\right|>\varepsilon  \right\}.
\end{equation} 
Note that all these stopping times depend on $K$ but we omit the dependency here.
Adapting the proof of Lemma 3.3 in \cite{coron2021emergence} we obtain the following proposition.

\begin{prop}
\label{prop:proportions}
Suppose that the assumptions of Proposition~\ref{prop:accumulation_B} hold and that initial conditions satisfy \eqref{condini_B1} and \eqref{condini_B2}.  Then, for any $\varepsilon>0$ there exists $\mathcal{A}_0>0$ and a positive constant $C$ independent of $\varepsilon$ such that, for all $\mathcal{A}\le\mathcal{A}_0$,
\[\liminf_{K \to \infty} \P \left( T_{\varepsilon }\wedge T_{0 } \le   R_{\mathcal{A}\varepsilon} \wedge U_{\varepsilon^{1/6}} \right) \ge 1- C\varepsilon^{1/12}.\]
\end{prop}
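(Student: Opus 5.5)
Up to $T_\varepsilon\wedge T_0$ both prey populations are below $\varepsilon K$, so they enter the predator dynamics only through $\rho_K N^K_i = \rho N^K_i/K \le \rho\varepsilon$. The plan is to control the total predator size $H^K$ and the proportion $H^K_0/H^K$ separately on $[0,T_\varepsilon\wedge T_0]$, following Lemma 3.3 of \cite{coron2021emergence}. We estimate the probability that $R_{\mathcal{A}\varepsilon}$, respectively $U_{\varepsilon^{1/6}}$, occurs first and sum the two bounds.

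\textbf{Total predator size.} Since $\prey_i\le\varepsilon K$ before $T_\varepsilon$, every predator (both types, mutations included) has per capita birth rate in $[\beta(1-\vartheta_K)-o(1),\ \beta+\rho\varepsilon]$ and per capita death rate $\delta+\gamma H^K/K^m$. Hence $H^K$ is sandwiched between two logistic birth–death processes with carrying capacities $K^m(\bar h\pm \rho\varepsilon/\gamma)$. The standard estimate of Lemma A.5 in \cite{coquille_stochastic_2021}, or the exit-time estimate for the logistic process, gives that these stay within $K^m$-distance $\mathcal{A}\varepsilon$ of $\bar h K^m$ for a time $e^{\zeta K^m}$ with probability tending to $1$, once $\rho\varepsilon/\gamma<\mathcal{A}\varepsilon/2$. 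That last condition fails if $\mathcal{A}$ is small, so I would instead run the comparison up to $T_\varepsilon\wedge T_0$, which is $O(\log K)$ with high probability, using the prey couplings by branching processes with immigration from Lemma~\ref{lemma:invasion_phase}. I would then choose the threshold for $R$ compatible with the drift bias of order $\rho\varepsilon$. The choice of $\mathcal{A}_0$ has to be reconciled with this; I expect $\mathcal{A}_0$ to be taken so that $\bar h\pm\rho\varepsilon/\gamma$ lies inside the window.

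\textbf{Proportion.} Apply Itô's formula for jump processes to $\alpha^K(t)=H^K_0/H^K$. Its drift is, up to mutation terms of order $K^{-m\vartheta}$,
\[
\frac{\rho}{K}\,\alpha^K(1-\alpha^K)\bigl(\prey_0-\prey_1\bigr),
\]
the stochastic analogue of \eqref{eq:alpha_t}. Its martingale part has quadratic variation of order $K^{-m}$ per unit time. The martingale is handled by Doob's inequality over a horizon $O(\log K)$ and is negligible. The drift integrated over $[0,T_\varepsilon\wedge T_0]$ is bounded by $\frac{\rho}{K}\int_0^{T_\varepsilon\wedge T_0} (\prey_0+\prey_1)\,ds$. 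The key point is that this integral is small even though the time horizon is of order $\log K$: each prey population grows or decays exponentially. I would use the branching-process sandwich to show that $\frac1K\int_0^{T_\varepsilon}\prey_i(s)\,ds\le C\varepsilon/\lambda$ with high probability, where $\lambda$ is the absolute slope, which is bounded away from $0$ because $\alpha$ is bounded away from $\alpha_c$ and $1-\alpha_c$. The integral of an exponential is dominated by its final value, which is at most $\varepsilon K$. This yields a proportion change of $O(\varepsilon)$, well below $\varepsilon^{1/6}$.

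\textbf{Main obstacle and probability bound.} The main obstacle is the coupling loop: the branching-process comparison for the prey requires the predator proportions to stay near $\alpha$, which is exactly what is being proved. I would close it by working up to the stopping time $T_\varepsilon\wedge T_0\wedge R_{\mathcal{A}\varepsilon}\wedge U_{\varepsilon^{1/6}}$, within which all rates are controlled, and then show that the last two times are not reached first. The non-sharp exponents $\varepsilon^{1/6}$ and $C\varepsilon^{1/12}$ suggest applying Markov's inequality to the integrated prey term, whose expectation is bounded via the supermartingale/exponential-growth estimates, rather than almost-sure bounds. Following \cite{coron2021emergence}, I would bound $\E[\sup|\alpha^K(t)-\alpha^K(0)|^{1/2}]$ by $C\varepsilon^{1/6}\cdot\varepsilon^{-1/12}$-type quantities and conclude with Markov's inequality, giving the stated $1-C\varepsilon^{1/12}$.
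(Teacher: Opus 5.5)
Your architecture matches the paper's. You separate the event that $U_{\varepsilon^{1/6}}$ comes first from the event that $R_{\mathcal{A}\varepsilon}$ comes first. You write $H^K_0/H^K$ as a martingale, with bracket $O(\log K/K^m)$ on an $O(\log K)$ horizon handled by Doob, plus the drift $\int\rho H^K_0H^K_1(N^K_0-N^K_1)/(KH^K(H^K+1))\,ds$. You reduce to showing $\frac1K\int_0^{\tau}(N^K_0+N^K_1)\,ds=O(\varepsilon)$ with $\tau=\tau_{\varepsilon,K}$, and you treat the total predator size by a logistic sandwich as in \cite{coron2021emergence}.

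The gap is in how you propose to obtain that integral bound. The branching-process sandwich cannot give it. Theorem~\ref{theo:convBPI} only controls $\log(1+\tilde N^{K,\pm}_i)/\log K$. The upper and lower comparison processes have growth rates differing by some $\delta>0$ that does not vanish as $K\to\infty$: it is at least of order $\varepsilon$, and really of the order of the tolerance allowed on the predator proportions before $\tau$. Over a horizon of order $\log K$ this becomes a multiplicative discrepancy $K^{c\delta}$. When $N^K_0$ hits $\varepsilon K$, the upper process may be of size $\varepsilon K^{1+c\delta}$. So bounding $\int_0^{T_\varepsilon}N^K_0$ by $\int_0^{T_\varepsilon}\tilde N^{K,+}_0$ only gives $\frac1K\int_0^{T_\varepsilon}N^K_0\,ds\lesssim \varepsilon K^{c\delta}/\lambda$, which diverges. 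The heuristic \emph{the integral of an exponential is dominated by its final value} is correct for $N^K_0$ itself. It must, however, be extracted from the drift of the actual process, not from the comparison processes.

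This is exactly what the paper does. It takes the linear test function $f=A_0N_0-A_1N_1$ with $A_0>1/r_0$ and $A_1>1/|r_1|$, where $r_0=b-d-p\alpha\bar h>0$ and $r_1=b-d-p(1-\alpha)\bar h<0$ are the frozen growth rates of the invading and declining prey. Before $\tau$ the true per-capita rates are within $C\varepsilon^{1/8}$ of $r_0,r_1$, and mutation terms are $O(v_K)$. Hence $\mathcal{L}f\ge N^K_0+N^K_1$ on $[0,\tau)$. Dynkin's formula then gives $\E\bigl[\frac1K\int_0^\tau(N^K_0+N^K_1)\,ds\bigr]\le\frac1K\E\bigl[f(Z^K(\tau))-f(Z^K(0))\bigr]\le C\varepsilon$, because both prey are below $\varepsilon K$ before $T_\varepsilon$. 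Doob, Markov and Jensen then produce the non-sharp exponents. A pathwise version also follows from the same compensator identity: $r^-\int_0^\tau N^K_0\,ds\le N^K_0(\tau)-N^K_0(0)-M_\tau$ with $M_\tau=O_{\P}(\sqrt{\varepsilon K\log K})$. Your closing remark about bounding an expectation via supermartingale estimates points in this direction, but it does not supply this mechanism.

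On the total-size part your worry is justified. The window $\mathcal{A}\varepsilon$ must contain the shifted carrying capacities $\bar h_\pm$, so $\mathcal{A}$ has to be bounded below rather than small. The paper's appendix proves that step for one fixed $\mathcal{A}_0$, hence for all $\mathcal{A}\ge\mathcal{A}_0$ by monotonicity of $R_{\mathcal{A}\varepsilon}$ in $\mathcal{A}$. Since the logistic bounds hold for exponentially long times, no detour through the $O(\log K)$ horizon is needed there.
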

Our proof is very similar to Lemma 3.3 in \cite{coron2021emergence} and is detailed in Appendix \ref{app:proof_proportion}.

The scheme of the proof is very similar to case A, with additional difficulties in the coupling of the two prey populations with branching processes.
Let us set
\begin{equation*}
    \tau_{\varepsilon, K}:= T_{\varepsilon }\wedge T_0\wedge R_{\mathcal{A}\varepsilon} \wedge U_{\varepsilon^{1/6}}.
\end{equation*}
Then, by \eqref{condini_B1}, \eqref{defUeps} and \eqref{defReps}, we find that there exists $g = g(\nu, \eps)$ such that, for all $t \in [0, \tau_{\varepsilon, K})$,
\begin{align*}
    \left| \frac{1}{K^m} H^K_0(t) - \alpha \bar{h} \right| \leq g(\nu, \eps), && \left| \frac{1}{K^m} H^K_1(t) - (1-\alpha) \bar{h} \right| \leq g(\nu, \eps),
\end{align*}
and $g(\nu, \eps) \to 0$ as $\nu \to 0$ and $\eps \to 0$. In the sequel we shall choose $\varepsilon\ge \nu$ such that $\tau_{\varepsilon,K}>0$.

We will now construct four branching processes with immigration $\tilde{N}^{K,-}_0$, $\tilde{N}^{K,+}_0$, $\tilde{N}^{K,-}_1$ and $\tilde{N}^{K,+}_1$, such that, for all $t \in [0,\tau_{K,\eps}]$,
\begin{equation*}
    \tilde{N}^{K,-}_i(t) \leq N^K_i(t) \leq \tilde{N}^{K,+}_i(t), \quad i \in \lbrace 0, 1 \rbrace,
\end{equation*}
with probability growing to $1$ as $K\to\infty$. The difficulty lies in the fact that immigration rates have to be controlled in a precise way. We use the recursive technique developed in \cite{coquille_stochastic_2021} (Section 4.2) which amounts to first control the population without the mutations, and then to use these controls in the immigration rates.  Let us consider $ K $ large enough so that the mutation rate $ \vartheta_K=K^{-v} \leq \varepsilon $.\smallskip\\
In the following we explain briefly the construction of the lower bound process $\tilde N^{K,-}_i$ for $i\in\{0,1\}$, the upper bound being constructed similarly.\\
\label{p:BD_construction}We first remark that in the population $N^K_i$, the births without mutation occur at rate $$(1-\vartheta_K) b\ge (1-\varepsilon)b $$  and the death rate is upper bounded on $[0,\tau_{\varepsilon, K}]$ by 
$d+p\alpha\bar h +2c\varepsilon+pg(\varepsilon, \nu)$ for population with type $0$ and by 
$d+p(1-\alpha)\bar h +2c\varepsilon+pg(\varepsilon, \nu)$ for population with type $1$.
Therefore 
$$N^K_i(t) \ge \hat N^{K,-}_i(t)$$ where $\hat N^{K,-}_i(t)$ is a branching process with the previous birth and death rates and initial condition $\lfloor K^{(x_i^0-C_0\nu)}-1\rfloor$. \\
As a consequence from Theorem \ref{theo:convBPI}, with high probability 
$$X^K_0(t\log(K)) \ge X^K_0(0) +t \left[b(1-\varepsilon)- d-p\alpha\bar h -2c\varepsilon+pg(\varepsilon, \nu)\right]:= f_0^{-}(t),$$
and $$X^K_1(t\log(K)) \ge X^K_1(0) +t \left[b(1-\varepsilon)- d-p(1-\alpha)\bar h -2c\varepsilon+pg(\varepsilon, \nu)\right]:= f_1^{-}(t).$$
We now use this lower bound in the mutation rate and define four branching processes with immigration, denoted by $\tilde{N}^{K,-}_0$, $\tilde{N}^{K,+}_0$, $\tilde{N}^{K,-}_1$ and $\tilde{N}^{K,+}_1$, with immigration, birth and death rates given by Table~\ref{table:rates_N_tilde_case_B}, such that, for all $t \in [0,\tau_{\varepsilon, K}]$,
\begin{equation*}
    \tilde{N}^{K,-}_i(t) \leq N^K_i(t) \leq \tilde{N}^{K,+}_i(t), \quad i \in \lbrace 0, 1 \rbrace,
\end{equation*}
with high probability.\\
From this argument, we obtain the convergence of the slopes on the first invasion interval, and the conclusion of the proof is obtained recursively, as in case A.

\begin{table}[h]
    \centering
    \renewcommand{\arraystretch}{2}
    \begin{tabularx}{\textwidth}{| l | X | X | X | X |}
    \hline
        \textbf{Process} & \textbf{Initial condition} & \textbf{Immigration rate} & \textbf{Per capita birth rate} & \textbf{Per capita death rate} \\
    \hline
        $ \tilde{N}^{K,-}_0 $ & $\lfloor K^{(x_0^0 - C_0 \nu)} - 1 \rfloor$ & $b K^{f_1^-(t)}$ & $ (1-\varepsilon) b $ & $ d + p \alpha \bar{h} + 2c \eps + p g(\nu,\eps) $ \\
    \hline
        $ \tilde{N}^{K,+}_0 $ & $\lfloor K^{(x^0_0 + C_0 \nu)} - 1 \rfloor $ & $b K^{f_1^+(t)}$ & $ b $ & $ d + p \alpha \bar{h} - p g(\nu, \eps)  $ \\
    \hline
        $\tilde{N}^{K,-}_1$ & $\lfloor K^{(x_1^0-C_0\nu)}-1\rfloor$ & $ b K^{f_0^-(t)}$ & $(1-\varepsilon) b  $ & $d + p (1-\alpha) \bar{h} + 2 c \eps + p g(\nu, \eps)$ \\
    \hline
        $\tilde{N}^{K,+}_1$ & $\lfloor K^{(x_1^0+C_0\nu)}-1\rfloor$ & $ b K^{f_0^+(t)}$ & $b $ & $d + p(1-\alpha) \bar{h} - p g(\nu, \eps) $ \\
    \hline
    \end{tabularx}
    \caption{Parameters of the branching processes with immigration used to control the size of the two populations $N^K_0$ and $N^K_1$ on $[0,\tau_{\varepsilon, K}]$. In this table $f_i^{-}(t)=x^0_i +t \left[b(1-\varepsilon)- d-p\alpha\bar h -2c\varepsilon+pg(\varepsilon, v)\right]$ and $f_i^{+}(t)=x^0_i +t \left[b- d-p\alpha\bar h +2c\varepsilon+pg(\varepsilon, v)\right]$}
    \label{table:rates_N_tilde_case_B}
\end{table}

\subsection{Case C}
\label{sec:C}

In case C, predators are autonomous ($\beta-\delta>0$), but neither the two matching types nor the four types equilibrium exist. In contrast to the previous case B, this implies that the critical predator frequency for the matching prey invasion, $\alpha_c$, is smaller than 1/2. This can be derived from the first inequality in Eq.~\eqref{eq:cond_4_types}. As in case B, after the first successful invasion both predator species are of order 1. Prey types will then successively invade the predator population when the matching predator frequency is below $\alpha_c$. However, because $\alpha_c<1/2$, there is a neighborhood around 1/2 in the space of predator proportions, where neither prey species is able to invade. 
Proposition~\ref{prop:cv_3types} $iii)$ deals with case C and states that after a prey invasion, a predator with proportion $\alpha_0<\alpha_c$ will have a proportion $\alpha_c<\kappa_0(\alpha_0)<1-\alpha_0$. This implies, in particular, that $$\bigg| \alpha_0-\frac12\bigg| > \bigg|\kappa_0(\alpha_0)-\frac12\bigg|,$$ 
which means that the predator proportions are approaching 1/2, entering eventually the interval $(\alpha_c,1-\alpha_c)$, which then results in prey extinction.

\begin{figure}[h!]
\begin{center}
\includegraphics[scale=1]{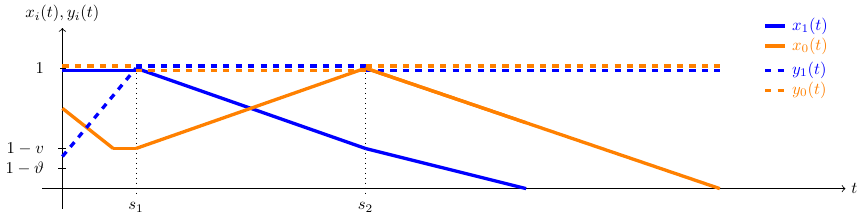}\\
\includegraphics[scale=0.45]{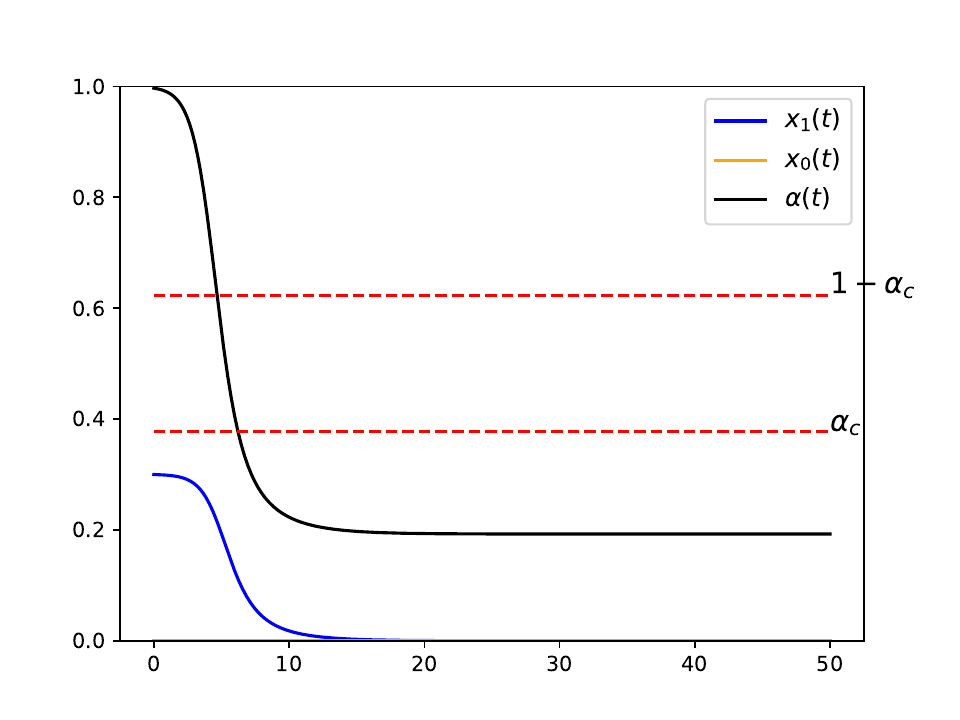}
\includegraphics[scale=0.45]{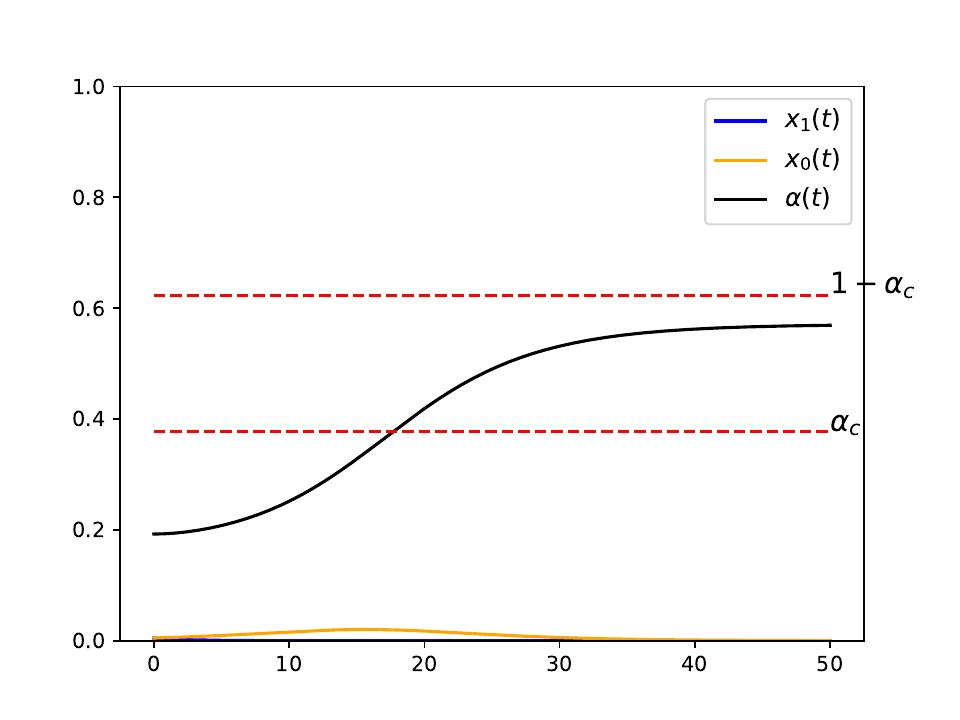}
\end{center}
\caption{Case C. The above Figure represents a typical trajectory of the limiting process $(x_0,x_1,y_0,y_1)$ in case C. The two graphs in the second line are the solutions of the dynamical system associated with the macroscopic populations at time $s_1$ and $s_2$, , where $\alpha(t)$ stands for the proportion of type $0$ predators $\alpha(t)=h_0(t)/(h_0(t)+h_1(t))$} 
\label{fig:caseC}
\end{figure}

\paragraph{Description of the limiting process}
As in case B, we assume that the initial state is a non-matching equilibrium with $\widetilde{\bm{z}}_0=(\bar{n},0,0,\bar{h})$. $x_0(0)=y_1(0)=1$ and $x_1(0), y_0(0) \in(0,1)$. Similarly, in the first time interval the predator of type $0$ invades with slope $\varrho_0(\widetilde{\bm{z}}_0)>0$ and the prey with type $1$ decays with slope $r_1(\widetilde{\bm{z}}_0)<0$. We can then define 
$$s_1 = \inf\{t\ge0, y_0(0)+  t\varrho_0(\widetilde{\bm{z}}_0)=1\}.$$
At time $s_1$, the prey of type $0$ and both predators have macroscopic size, and from Proposition \ref{prop:syst-3}, the next equilibrium has the form $(0,0,\kappa_0(0)\bar h, (1-\kappa_0(0))\bar h)$ with $\kappa_0(0)>1/2>\alpha_c$.
From this point, there are two different cases:
\begin{enumerate}
    \item $\kappa_0(0)<1-\alpha_c$ which entails that both prey population will have negative growth rates 
    $$r_0 = b-d-p\kappa_0(0) \bar{h} <0, \qquad  r_1 = b-d-p(1-\kappa_0(0)) \bar{h} <0,$$ and will go extinct. We can then define $$s_{ext,i} = \inf\{t\ge s_1, x_i(s_1) + (t-s_1)r_i=0\}, \quad  \forall i\in\{0,1\},$$ and set $s_2=\min(s_{ext,0},s_{ext,1})$,  $s_3=\max(s_{ext,0},s_{ext,1})$ and after this time the two prey populations are extinct.
    \item $\kappa_0(0)>1-\alpha_c$ and thus $1-\kappa_0(0)<\alpha_c$. This implies that the prey population with type $1$ will increase, and that similarly as in case B we will observe a succession of prey invasions. We denote by $\alpha_n$ the proportion of predator $0$ at time $s_n$, for $n\ge1$. By construction, we have $\alpha_1=\kappa_0(0)$, and since $\alpha_1>1-\alpha_c$, we obtain from Proposition \ref{prop:syst-3} that the invasion of prey with type $1$ leads to a change in the type 1 predator population such that $1-\alpha_2 = \kappa_0(1-\alpha_1)>1/2$. Using $iii)$, we obtain that moreover, $$f(1-\alpha_2) < f(1-\alpha_1) -M,$$ where $M>0$ and $f$ is the convex function given by \eqref{def:f}. This proves that after a finite number of invasions $k\ge1$, the predator proportions $\alpha_k$ and $1-\alpha_k$ will both belong to $(\alpha_c,1-\alpha_c)$, and we can conclude to the prey extinction as in the first item (see Figure \ref{fig:fcurve} in the appendix).
\end{enumerate}

\section{Behaviour at the accumulation point}
\label{subsec:accumulation_point}
The goal of this section is to study what happens at the accumulation time $S_*$. 
We have seen that in cases $A$ and $B$, $S_*$ is finite and $\lim_{k\to\infty} x_i(s_k)=\lim_{k\to\infty} y_i(s_k)=1$ for $i\in\{0,1\}$.  We expect that in these cases where the coexistence equilibrium $\zbf^*$ is positive, all four populations will coexist at a macroscopic level.
We introduce here a modified setting in which intraspecific competition is type dependent, for which a similar accumulation occurs. The main interest of such a modified setting is that we manage to prove the convergence of the population sizes after accumulation. We expect that a similar behaviour hold for the original setting but our proof strategy fails due to the symmetry of the system.

\subsection{A slightly modified setting} 

We consider here a slightly modified model in which the competition parameters depend on the type. More precisely we assume that a prey of type $i$ competes at rate $c$ with individuals of type $i$ and at rate $\tilde{c}$ with individuals of type $1-i$. Similarly, a predator of type $i$ competes at rate $\gamma$ with individuals of type $i$ and at rate $\tilde{\gamma}$ with individuals of type $1-i$. 
We will assume that 
\begin{equation}
    \label{ass:competition}
    c>\tilde{c}\text{ and }\gamma>\tilde\gamma.
\end{equation} 
With these assumptions the behaviour of the global system will be changed but if $c-\tilde{c}$ and $\gamma-\tilde\gamma$ are small, it should be qualitatively similar to that of the model with uniform competition studied above.

\paragraph{Behaviour of the associated dynamical system}
The associated large population limit follows a modified deterministic system, which reads
\begin{equation}
\label{eq:modified_system}
\left\{\begin{aligned}
&\frac{dn_i(t)}{dt}= n_i(t)(b-d-cn_i(t)-\tilde{c}n_{1-i}(t))-ph_i(t))\\
&\frac{dh_i(t)}{dt}=h_i(t)(\beta-\delta-\gamma h_i(t)-\tilde{\gamma}h_{1-i}(t)) +\rho n_i(t))\\
\end{aligned}\right.\quad \forall i\in\{0,1\}.
\end{equation}
This system admits a four-types coexistence equilibrium $(\tilde{n}, \tilde{n}, \tilde{h}, \tilde{h})$ where
\[ \tilde{n}= \frac{(b-d)(\gamma+\tilde\gamma)-p(\beta-\delta)}{\rho p +(c+\tilde{c})(\gamma+\tilde\gamma)}\,,
\qquad \tilde{h} = \frac{1}{p}(b-d-(c+\tilde{c})) \tilde{n}.\]
Note that similarly to \eqref{eq:n_star_h_star}, the equilibrium can be written as
\begin{equation} \label{ntilde_htilde}
    \tilde{n} = \frac{\bar{n}-\frac{p\gamma}{c(\gamma+\tilde{\gamma})}\bar{h}}{\frac{\rho p}{ c(\gamma+\tilde{\gamma})}+\frac{c+\tilde{c}}{c}},\qquad \tilde{h} = \frac{\bar{h}+\bar{n}\frac{\rho c}{(\gamma+\tilde\gamma)(c+\tilde c)}}{\frac{\rho p}{ \gamma(c+\tilde{c})}+\frac{\gamma+\tilde{\gamma}}{\gamma}}\,.
\end{equation}
We note that as $\tilde{c}\to c$ and $\tilde{\gamma}\to \gamma$, the equilibrium $(\tilde{n},\tilde{h})$ converges to $(n^*,h^*)$.
\begin{rem}
Note that this modified system \eqref{eq:modified_system} also admits the following equilibria of interest: non-matching equilibria $(0,\bar{n},\bar{h},0)$, $(\bar{n},0,0,\bar{h})$; matching equilibria $(\hat{n},0,\hat{h},0)$ $(0,\hat{n},0,\hat{h})$, equilibria with two prey $( \frac{b-d}{c+\tilde c},\frac{b-d}{c+\tilde c}, 0,0)$ and  equilibria with two predators $( 0,0,\frac{ \beta-\delta}{\gamma+\tilde \gamma},\frac{\beta-\delta}{\gamma+\tilde \gamma})$. In particular there will be no analog of cases $B$ or $C$.
\end{rem}

To mimic the conditions of case A, we need to ensure that
\begin{itemize}
    \item the matching types equilibrium $(\hat{n},\hat{h})$, the non-matching types equilibrium $(\bar{n},\bar{h})$ and the coexistence equilibrium $(\tilde{n},\tilde{h})$ are positive;
    \item the matching equilibrium is stable in a system with two predators and one prey;
    \item the non-matching equilibrium is stable in a system with two prey and a predator. 
\end{itemize} 
In view of \eqref{ntilde_htilde} and \eqref{eq:n_hat_h_hat}, the coexistence equilibrium is positive as soon as
\begin{equation*}
    0 < \bar{h} < \frac{c \bar{n}}{p} \le \frac{c(\gamma+\tilde \gamma)\bar{n}}{p\gamma}.
\end{equation*}
Therefore, we find that under the assumptions of case $A$ and with $c-\tilde{c}$ and $\gamma-\tilde\gamma$ small enough and positive, the dynamics of the stochastic processes are similar, that is, the convergence of $(X^K_i(t\log(K)),Y^K_i(t\log(K)))$ to a deterministic linear by parts process $(x_i(t),y_i(t))$ holds.

The slopes in the non-matching equilibrium $\bar{\zbf}_{0,1}$ or $\bar{\zbf}_{1,0}$ remain the same as in the symmetric case: $\bar{r}=-p\bar{h}$ for prey and $\bar{\varrho}=\rho\bar{n}{m}$ for predators. The only change comes from the slopes in the matching equilibria which become for prey 
$$ \tilde{r} = b-d-\tilde c \hat{n} = \tilde{c} (\bar{n}-\hat{n}) + (c-\tilde c)\bar{n}>0$$
and for predators
$$\tilde{\varrho}  m = \gamma\bar{h} -\tilde\gamma \hat{n} =  \tilde{\gamma}(\bar{h}-\hat{h}) + (\gamma-\tilde\gamma)\bar{h}.$$
The first term in $\tilde\varrho$ is negative such that if $(\gamma-\tilde\gamma)$ is small enough $\tilde\varrho$ remains negative.

Note that assuming that $(c-\tilde{c})$ and $(\gamma-\tilde\gamma)$ small enough ensures that $\frac{\bar r \tilde \varrho}{\bar \varrho\tilde r}<1$ and thus that the invasion times accumulate as in Proposition \ref{prop:invasionTime_contraction}.

\subsection{Convergence to coexistence equilibrium} 

Our goal is to prove that at the accumulation time $S_*$, the four populations will coexist around the equilibrium $\tilde{z}=(\tilde{n}, \tilde n, \tilde h, \tilde h)$.
More precisely, for all $\eta$,  we want to prove the existence of a stopping time $T^K_\eta$ such that
\begin{equation} \label{Z_reaches_equilibrium}
    \lim_{K\to\infty} \P(||\textbf{Z}^K_{sc}(T^K_\eta)-\tilde z||>\eta )=0
\end{equation}
and
\begin{equation} \label{time_to_reach_equilibrium}
    \lim_{K\to\infty} \P\left(\left|\frac{T^K_\eta}{\log(K)}-S_*\right|>\eta\right)=0
\end{equation}

The proof is split into three steps.
\begin{itemize}
    \item First, we will construct a stopping time close to $S_*$ such that, at this time, the stochastic population sizes are in a set of the form $[K^{1-\eps_0}, MK]^2 \times [K^{m(1-\eps_0)}, M K^m]^2$ with large probability.
    \item Second, we will prove that the deterministic large population approximation reaches a neighborhood of $\tilde{z}$ in a time of order $\eps_0\log(K)$ starting from such an initial condition.
    \item Third, we will prove that the stochastic process remains close to the deterministic approximation on this interval.
\end{itemize}

Let $\eta>0$ be fixed.
\paragraph{Step 1: Initial condition}
Recall the iterative construction of the limiting process $(x_0,x_1,y_0,y_1)$ and the associated times $(s_k)_{k\ge0}$. From Proposition \ref{prop:invasionTime_contraction} we deduce that we can choose $k_\eta$ large enough such that $|S_*-s_{k_\eta}|\le \eta/2$ and 
\begin{align*}
&x_0(s_{k_\eta}) \ge 1-\eta , \qquad x_1(s_{k_\eta}) \ge 1-\eta, \\
&y_0(s_{k_\eta}) \ge 1-\eta , \qquad y_1(s_{k_\eta}) \ge 1-\eta. 
\end{align*}
For $\eps>0$, using the proof of convergence of the exponents in Lemma \ref{lemma:invasion_phase}, there exists a stopping time that we will denote by $\theta^K_{k_\eta}$ such that, on an event $\Omega_{K,\eta}$, whose probability goes to 1 as $K \to \infty$,
\begin{align*}
    \left| \frac{1}{K} \prey_0(\theta^K_{k_\eta}) - \hat{n} \right| \leq  \varepsilon, && \left| \frac{1}{K^m} \pred_0(\theta^K_{k_\eta}) - \hat{h} \right| \leq  \varepsilon, && 2\varepsilon K\leq \prey_1(\theta^K_{k_\eta}) \leq 3\varepsilon K,
\end{align*}
and
\begin{equation*}
    Y^K_1(\theta^K_{k_\eta}) \geq y_1(s_1) - C \varepsilon \geq 1-\eta-C\varepsilon.
\end{equation*}
Moreover, $\frac{\theta^K_{k_\eta}}{\log(K)} \to s_{k_\eta}$ in probability as $K \to \infty$.
Let us now consider $$\eps_0 = \eta + C \varepsilon>0,$$ and $\kappa>0$ large enough. Then, for $K$ large enough, on the event $\Omega_{K,\eta}$, $$\textbf{Z}^K_{sc}(\theta^K_{k_\eta})\in[K^{-\eps_0},\kappa]^4.$$
Using the strong Markov property at time $\theta^K_{k_\eta}$, we can now consider the stochastic process starting from such an initial condition. 
Using the triangular inequality, we have $\forall t\ge0$
\[||\textbf{Z}^K_{sc}(t)-\tilde z||\le ||\textbf{Z}^K_{sc}(t)-\zbf^{(\textbf{Z}^K_{sc}(0))}(t)||+||\zbf^{(\textbf{Z}^K_{sc}(0))}(t)-\tilde z||.
\]
Let us first look at the second term on the right hand side.

\paragraph{Step 2: Convergence of the deterministic process}
Let us now introduce the Lyapunov function associated to the solution $$\zbf(t)=(n_0(t),n_1(t),h_0(t),h_1(t))$$ of the dynamical system \eqref{eq:modified_system}:
\[V(\zbf(t)) = \sum_{i=0}^1 \left[ n_i(t)-\tilde{n}\log(n_i(t))+\frac{p}{\rho}(h_i(t)+\tilde{h}\log(h_i(t))) \right].\]
Differentiation of the Lyapunov function along the trajectory of the dynamical system leads to 
\[ 
\frac{d}{dt}V(\zbf(t)) = -c(n_0-\tilde{n})^2 - 2\tilde{c}(n_0-\tilde{n})(n_1-\tilde{n})-c(n_1-\tilde{n})^2 -\gamma(h_0-\tilde{h})^2 - 2\tilde{\gamma}(h_0-\tilde{h})(h_1-\tilde{h})-\gamma(h_1-\tilde{h})^2 .
\]
This quadratic form is negative definite for $c>\tilde{c}$ and $\gamma>\tilde\gamma$, which implies the global stability of the coexistence equilibrium \eqref{ntilde_htilde}.
Moreover since $V$ is convex with a unique minimum at $(\tilde{n},\tilde{n},\tilde{h},\tilde{h})$, we have that $\forall \delta>0$ there exists $M_\delta > 0$ such that 
\[z\in\mathcal{B}(\tilde{z},\delta)\Rightarrow V(z)<M_{\delta}\] and conversely, $\forall M>0$ there exists $\delta_M>0$ such that 
\[ V(z)<M\Rightarrow z\in\mathcal{B}(\tilde{z},\delta_M).\]
Moreover, the map $M\to\delta_{M}$ is decreasing.

\begin{lemma}
\label{lem:lyap_argument}
Let us consider $\kappa>\eps>0$ and an initial condition $\zbf_0 \in [K^{-\eps}, \kappa]^{4}$. Then for all $\delta>0$, there exists a time $T^{K,\eps}=C_1+C_2\eps \log(K)$ where $C_1$ and $C_2$ only depend on $\delta$ and $\kappa$ such that for all $t\ge T^{K,\eps}$
\[z(t) \in\mathcal{B}(\tilde{z},\delta). \]
\end{lemma}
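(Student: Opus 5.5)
The plan is to use the Lyapunov function $V$ together with the negative definite dissipation, in two phases. In the first phase, lasting $O(\eps\log K)$, the trajectory leaves the thin layer near the boundary. In the second, lasting $O(1)$, it enters $\mathcal{B}(\tilde z,\delta)$.

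\textbf{Step 1: bound on $V$ at time $0$.} For $\zbf_0\in[K^{-\eps},\kappa]^4$, each log term satisfies $-\tilde n\log n_i\le \tilde n\,\eps\log K$, and similarly for the $h_i$ terms. (I read the Lyapunov function in the usual form $h_i-\tilde h\log h_i$.) Hence
\[V(\zbf_0)\le A_\kappa + B\,\eps\log K, \qquad B=2\tilde n+2\tfrac{p}{\rho}\tilde h .\]
Since $\frac{d}{dt}V\le 0$, this bound holds along the whole trajectory. It follows that every coordinate stays bounded above by a constant $\kappa'$ and below by $K^{-C\eps}$ for all $t\ge0$.

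\textbf{Step 2: uniform decay of $V$ away from the equilibrium.} Let $Q$ denote the quadratic form, so that
\[\tfrac{d}{dt}V=-Q(\zbf-\tilde z)\le -\lambda\|\zbf-\tilde z\|^2,\qquad \lambda>0,\]
by \eqref{ass:competition}. Fix $\delta$ and set $M=M_{\delta}$, chosen so that $\{V<M\}\subset\mathcal B(\tilde z,\delta)$ (after shifting $V$ by its minimum). Outside $\mathcal B(\tilde z,\delta')$, where $\delta'$ is chosen so that $\mathcal B(\tilde z,\delta')\subset\{V<M\}$, the decay rate is at least $\lambda\delta'^2$.

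This rate alone would give a time of order $(A_\kappa+B\eps\log K)/(\lambda\delta'^2)$, which already has the form $C_1+C_2\eps\log K$. There is a subtlety: the trajectory could re-enter and leave $\mathcal B(\tilde z,\delta')$. This is handled by monotonicity of $V$. Once $V<M$, it stays below $M$ forever, and $\{V<M\}\subset\mathcal B(\tilde z,\delta)$. While $V\ge M$, the trajectory lies outside $\mathcal B(\tilde z,\delta')$, so $V$ decreases at rate at least $\lambda\delta'^2$.

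Therefore the hitting time of $\{V<M\}$ is at most
\[T^{K,\eps}=\frac{A_\kappa-M}{\lambda\delta'^2}+\frac{B}{\lambda\delta'^2}\,\eps\log K .\]
For every $t\ge T^{K,\eps}$ we then have $V(\zbf(t))<M$, and hence $\zbf(t)\in\mathcal B(\tilde z,\delta)$. The constants $C_1,C_2$ depend only on $\delta$ and $\kappa$, as required.

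\textbf{Main obstacle.} The delicate point is checking that the sublevel sets of $V$ really are nested balls as used above: $V$ must be coercive and have its unique minimum at $\tilde z$. This requires the correct sign in the predator term of $V$, namely $h_i-\tilde h\log h_i$. The stated derivative identity also needs verification, in particular the cancellation of the cross terms $p(h_i-\tilde h)(n_i-\tilde n)$ against $\rho\cdot\frac{p}{\rho}$. This uses the equilibrium equations of \eqref{eq:modified_system} at $(\tilde n,\tilde n,\tilde h,\tilde h)$. Everything else is the elementary integration of $\frac{d}{dt}V\le-\lambda\delta'^2$ described above.
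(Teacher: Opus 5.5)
Your proposal is correct and is essentially the paper's proof: you bound $V(\zbf_0)\le C_1+C_2\eps\log K$, use the uniform decay $\frac{d}{dt}V\le -C_{\delta'}$ outside a small ball to bound the hitting time, and the only cosmetic difference is that you keep the trajectory in $\mathcal{B}(\tilde z,\delta)$ through invariance of the sublevel set $\{V<M\}$, where the paper appeals to local stability of $\tilde z$. One aside in your Step 1 is inaccurate but never used: the bound on $V$ only shows the coordinates are $O(\eps\log K)$, not bounded by a $K$-independent constant, which would instead require a separate comparison such as $\frac{d n_i}{dt}\le n_i(b-d-cn_i)$.
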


\begin{proof}
By the local stability of $\tilde{z}$, for any $\delta > 0$ small enough, there exists $\delta' > 0$ such that
\begin{equation} \label{delta_prime}
    \forall z \in B(\tilde{z}, \delta'), \quad \zbf^{(z)}(t) \in B(\tilde{z}, \delta), \forall t \geq 0.
\end{equation}
It is thus enough to show that there exists $T \leq T^{K,\varepsilon}$ such that $z(T) \in B(\tilde{z}, \delta')$.

Let us consider $M$ large enough such that $V(z)\le M$ implies $z\in\mathcal{B}(\tilde{z},\delta')$.
We define $T=T(\eps, \zbf_0,\delta')$ as the first time such that $\zbf^{(\zbf_0)}(t)$ enters the ball $\mathcal{B}(\tilde{z},\delta')$.\\
We can write
\[ V(z(t))=V(z(0))+\int_0^t \frac{d}{dt}V(z(t))|_{t=s} ds.\]
Then, using the fact that $\frac{d}{dt}V(z(t))$ is a quadratic negative definite form, we deduce that there exists a constant $C_{\delta'}>0$ such that, for all $t \leq T$, $\frac{d}{dt}V(z(t))\le -C_{\delta'}$, which implies
\[ V(z(t))\le V(z(0))-C_{\delta'} t, \quad \forall t \leq T.\]
We also note that, since $\zbf_0 \in [K^{-\eps}, \kappa]^{4}$,
\begin{align*}
    V(\zbf_0) &= \sum_{i\in\lbrace 0, 1\rbrace} \left[ n_i-\tilde{n}\log(n_i)+\frac{p}{\rho}(h_i-\tilde{h}\log(h_i)) \right] \\
    &\le 2 \left( 1 + \frac{p}{\rho} \right) \kappa + 2 \eps \log(K) \left( \tilde{n}+\frac{p}{\rho}\tilde{h} \right).
\end{align*}
Therefore, setting $C_1 = 2 \left( 1 + \frac{p}{\rho} \right) \kappa$ and $C_2 = 2 \left( \tilde{n}+\frac{p}{\rho}\tilde{h} \right) $, we have that $\forall t\le T$,
\[ V(z(t))\le C_1 + C_2 \eps \log(K) -C_{\delta'} t.\]
But the right hand side is smaller than $M$ as soon as 
\[t\ge \frac{1}{C_{\delta'}}(C_1-M+ C_2\eps\log(K) ) .\]
Thus, if we define
\begin{equation*}
    T^{K,\varepsilon}=\frac{1}{C_{\delta'}}(C_1-M+C_2\eps\log(K)),
\end{equation*}
we conclude that $T \leq T^{K,\varepsilon}$ and $z(T) \in B(\tilde{z}, \delta')$.
By \eqref{delta_prime}, this proves the statement.
\end{proof}

\paragraph{Step 3: Deterministic approximation of the stochastic process}
Recall that we assume that the initial condition satisfies $\textbf{Z}^K_{sc}(0)\in[K^{-\eps_0},\kappa]^4,$ where $\eps_0=\eta+C\eps>0$. In order to conclude the proof, it is sufficient to prove that with high probability as $K \to \infty$, $||\textbf{Z}^K_{sc}(T^{K,\eps_0})-\zbf^{(\textbf{Z}^K(0))}(T^{K,\eps_0})|| \le \delta$.

For sake of convenience, we shall use the following notations. When considering the non-rescaled process in $\N^4$, we write $\zbf = (N_0,N_1,H_0,H_1)$. We will note the $K$ dependency when considering the scaled version 
\begin{equation}
    \label{eq:notation}
\zbf^{K} = (n_0^K,n_1^K,h_0^K,h_1^K)  = \left(\frac{N_0}{K},\frac{N_1}{K},\frac{H_0}{K^m}, \frac{H_1}{K^m} \right).
\end{equation}
Let us introduce the infinitesimal generator $\mathcal{L}_{sc}^K$ associated with $\mathbf{Z}^K_{sc}$ defined in \eqref{def:Zsc}, for all measurable functions $f:\R^4\to \R$
\begin{align}
\mathcal{L}^K_{sc}f(\zbf^K)&=
\sum_{i=0}^1 bn_i^K(1-v_K) K(f(\zbf^K +\frac{1}{K} e_{n_i})-f(\zbf^K))  +  bn_i^K v_K K (f(\zbf +\frac{1}{K} e_{n_{1-i}})-f(\zbf^K))\nonumber\\
& + n_i^K K(d+c (n_i^K+n_{1-i}^K) + p h_i^K) (f(\zbf^K-\frac{1}{K}e_{n_i})-f(\zbf^K))\nonumber\\
&+   h_i^K(\beta+\rho n_i^K)(1-\vartheta_K) K^m(f(\zbf^K +\frac{1}{K^m} e_{h_i})-f(\zbf^K))\nonumber\\
&+  h_i^K (\beta+\rho n_i^K)\vartheta_K K^m (f(\zbf^K +\frac{1}{K^m} e_{h_{1-i}})-f(\zbf^K))\nonumber\\
& + h_i^K K^m(\delta+\gamma (h_i^K+h_{1-i}^K)) (f(\zbf^K-\frac{1}{K^m}e_{h_i})-f(\zbf^K)),
\label{eq:gen_sc}
\end{align}
where $(e_{n_0}, e_{n_1}, e_{h_0}, e_{h_1})$ stands for the canonical basis of $\R^4$.
We will use at several places the following decomposition for $f$ a measurable and $\mathcal{C}^1$ function
\begin{equation}
    \label{eq:generator_approx_1}
\mathcal{L}^K_{sc}f(\zbf^K) = F(\zbf^K) \cdot \nabla f(\zbf^K) + \mathcal{R}^K_1f(\zbf^K)+\mathcal{R}^K_2(\zbf^K),
\end{equation}
where $F : \R_+^4 \to \R^4$ is the direction of the flow of the associated deterministic system defined for all $(n_0,n_1,h_0,h_1)\in(\R_+)^4$ by \eqref{eq:syst-4}:
\begin{align*}
F((n_0, n_1, h_0, h_1)) &= \begin{pmatrix}
n_0 (b-d - c(n_0+n_1) - ph_0) \\
n_1 (b-d - c(n_0+n_1) - ph_1) \\
h_0 (\beta-\delta - \gamma (h_0+h_1) + \rho n_0) \\
h_1 (\beta-\delta - \gamma (h_0+h_1) + \rho n_1)
\end{pmatrix},
\end{align*}
and $\mathcal{R}^K_1f(\zbf^K)$ and $\mathcal{R}^K_2(\zbf^K)$ are rest terms involved in neglecting small effects of mutations and the Taylor expansion of $f$. The detailed decomposition will be stated in the proofs (see Appendix \ref{app:sto}).

Using \eqref{eq:generator_approx_1}
, we can write 
\begin{align*}
    \textbf{Z}^K_{sc}(t) &= \textbf{Z}^K_{sc}(0) + \int_0^t \mathcal{L}^K_{sc}I (\textbf{Z}^K_{sc}(s))ds + M^K_t\\
    &= \textbf{Z}^K_{sc}(0) + \int_0^t F(\textbf{Z}^K_{sc}(s)) ds+ \int_0^t \mathcal{R}_1^K (\textbf{Z}^K_{sc}(s))ds  + M^K_t,
\end{align*}
where $I(\zbf)=\zbf$, $M^K_t$ is the martingale term
and 
\begin{align*}
\mathcal{R}_1^K (\zbf^K)&=
\sum_{i=0}^1 bn_i^K v_K ( e_{n_{1-i}}-e_{n_i})+   h_i^K(\beta+\rho n_i^K)\vartheta_K ( e_{h_{1-i}}-e_{h_i}).
\end{align*}
In this case, there is no rest term coming from the Taylor expansion of the identity function.
Moreover, the deterministic solution $\zbf^K(t) = \zbf^{(\textbf{Z}^K_{sc}(0))}(t)$ satisfies 
\[ \zbf^K(t)=  \textbf{Z}^K_{sc}(0) + \int_0^t F(\zbf^K(s)) ds.\]
As a consequence 
\begin{align*}
||\textbf{Z}^K_{sc}(t) -\zbf^K(t)||
&\le  \int_0^t|| F(\textbf{Z}^K_{sc}(s))-F(\zbf^K(s))||ds+ \left\|\int_0^t\mathcal{R}_1^K (\textbf{Z}^K_{sc}(s))ds\right\| + ||M^K_t||.
\end{align*}
Let us consider a compact set $\mathcal{K} \subset \R_+^4$ such that $[K^{-\varepsilon_0}, \kappa]^4 \subset \mathcal{K}$ and define $\tau_\kappa^K$ as the exit time of $\textbf{Z}^K_{sc}$ from $\mathcal{K}$. Later on, we will prove that it is possible to choose  $\mathcal{K}$ such that $\P(T^{K,\eps_0}<\tau_\kappa^K)\to 1$ as $K \to \infty$.

Due to the regularity of $F$, there exists a constant $C_{Lip}$ such that $\forall z, z'\in \mathcal{K}$, 
\[||F(z)-F(z')||\le C_{Lip}||z-z'||.\]
Moreover, for all $t\le \tau_\kappa^K$, there exists a constant $C_R$ such that 
\[\mathcal{R}_1^K(\mathbf{Z}^K(t))\le C_R K^{- (v\wedge \vartheta)}.\]
As a consequence, we obtain that for all $t\le \tau_\kappa^K$
\begin{align*}
||\textbf{Z}^K_{sc}(t) -\zbf^K(t)||
&\le C_{Lip}  \int_0^t  ||\textbf{Z}^K_{sc}(s) - \zbf^K(s)||ds+  t C_R K^{- (v\wedge \vartheta)}+ \sup_{s\in[0,t]}||M^K_s||.
\end{align*}
Using Gronwall's lemma we find that 
\begin{align*}
||\textbf{Z}^K_{sc}(t\wedge\tau_\kappa^K) -\zbf^K(t\wedge\tau_\kappa^K)||
&\le \exp(C_{Lip}(t\wedge\tau_\kappa^K))\left( (t\wedge\tau_\kappa^K)C_R K^{- v\wedge \vartheta}+ \sup_{s\in [0,(t\wedge\tau_\kappa^K)]}||M^K_s||\right).
\end{align*}
Finally, since there exists a constant $C_M$ such that for all $t \geq 0$ 
\[\E[||\langle M^K\rangle_{t\wedge\tau_\kappa^K}||]\le t C_M K^{-(1\wedge m)}, \]
we obtain with Doob's maximal inequality that 
\begin{equation*}
    \P\left( \sup_{t \in [0, T^{K,\varepsilon_0}\wedge\tau_\kappa^K]} \| M^K(t) \| > a_K \right) \leq \frac{C_M T^{K,\varepsilon_0} K^{-  (1\wedge m)}}{a_K^2}.
\end{equation*}
Taking for example $a_K = K^{-(1\wedge m)/ 3}$, the right hand side tends to zero as $K \to \infty$.

On the event
\begin{equation*}
    \left\lbrace \sup_{t \in [0, T^{K,\varepsilon_0}\wedge \tau_\kappa^K]} \| M^K(t) \| \leq a_K \right\rbrace,
\end{equation*}
we have that there exists $\delta>0$ such that
\begin{align*}
||\textbf{Z}^K_{sc}(T^{K,\varepsilon_0}\wedge\tau_\kappa^K) -\zbf^K(T^{K,\varepsilon_0}\wedge\tau_\kappa^K)||
&\le \exp(C_{Lip}\ T^{K,\varepsilon_0}) T^{K,\varepsilon_0}CK^{- \delta},
\end{align*}
for a constant $C>0$ changing from line to line.
Recalling from Lemma~\ref{lem:lyap_argument} that $T^{K,\varepsilon_0} = C_1 + C_2 \varepsilon_0 \log(K)$, 
we conclude that 
\begin{equation*}
    ||\textbf{Z}^K_{sc}(T^{K,\varepsilon_0}\wedge\tau_\kappa^K) -\zbf^K(T^{K,\varepsilon_0}\wedge\tau_\kappa^K)|| \leq K^{\varepsilon_0 C_2 C_{lip}-\delta} C \log(K).
\end{equation*}
Choosing $\varepsilon_0$ small enough allows the r.h.s. to go to $0$ as $K\to\infty$. 
This yields
\begin{equation} \label{lim_P_sup_Z-z}
    \lim_{K \to \infty} \P\left( \sup_{t \in [0, T^{K,\varepsilon_0} \wedge \tau_\kappa^K]} \| \textbf{Z}^{K}_{sc}(t) - \zbf^K(t) \| > \delta \right) = 0,
\end{equation}
for any $\delta > 0$.\smallskip\\

Let us finally prove that we can  choose $\mathcal{K}$ such that $\P(T^{K,\eps_0}<\tau_\kappa^K)\to 1$ as $K \to \infty$. Recall that from Lemma \ref{lem:lyap_argument}, $T^{K,\eps_0}$ is defined from the solutions of the deterministic flow.
Let us remark that, on the event
\begin{equation*}
    \left\lbrace \sup_{t \in [0, T^{K,\varepsilon_0} \wedge \tau_K]} \| \textbf{Z}^{K}_{sc}(t) - \zbf^K(t) \| \leq \delta \right\rbrace,
\end{equation*}
the trajectory of $t \mapsto \textbf{Z}^K_{sc}(t \wedge T^{K,\varepsilon_0} \wedge \tau_K)$ lies in the set
\begin{equation*}
   \mathcal E_\delta:= \lbrace \zbf \in \R_+^4 : \exists t \in [0,T^{K,\varepsilon_0}] : \| \zbf - \zbf^K(t) \| \leq \delta \rbrace, 
\end{equation*}
which depends only on the initial condition $\textbf{Z}^K_{sc}(0)$ of both the stochastic and deterministic processes and $\delta$.
We can thus choose $\mathcal{K}\subset \mathcal E_\delta$ 
and then, by \eqref{lim_P_sup_Z-z}, $\P(\tau_K \leq T^{K,\varepsilon_0}) \to 0$ as $K \to \infty$.

Putting together the conclusions from steps 1, 2 and 3, this concludes the proof of \eqref{Z_reaches_equilibrium}, for $T^K_\eta = \theta^K_{k_\eta} + T^{K,\varepsilon_0}$, for a small enough $\varepsilon_0$.

Let us now prove \eqref{time_to_reach_equilibrium}.
We observe, using the value of $T^{K,\varepsilon_0}$ given in Lemma \ref{lem:lyap_argument}, that
\begin{equation*}
    \frac{\theta^K_{k_\eta}}{\log(K)} \leq \frac{T^K_\eta}{\log(K)} \leq \frac{\theta^K_{k_\eta}}{\log(K)} + C_2 \varepsilon + \frac{C_1}{\log(K)}.
\end{equation*}
Since $\frac{\theta^K_{k_\eta}}{\log(K)} \to s_{k_\eta}$ as $K \to \infty$ and $|s_{k_\eta} - S_*| \leq \eta/2$, choosing $\varepsilon$ small enough, we obtain \eqref{time_to_reach_equilibrium}.

\section*{Acknowledgements}
We would like to thank Camille Coron and Diala Abu Awad for organizing the Research Programm ``Ecosystems dynamics: Stakes, data and models'' (Institut Pascal - Université Paris Saclay) in 2019 where our discussions on this project started.
\\
M.C. is partially funded by the Chair ”Modélisation Mathématique et Biodiversité” of Veolia Environnement-École Polytechnique-Muséum national d’Histoire naturelle-Fondation X and by ANR project HAPPY (ANR-23-CE40-0007) and DEEV (ANR-20-CE40-0011-01).

\bibliographystyle{plain}
\bibliography{biblio}

\newpage
\appendix
\begin{center}
{\Large 
    Appendix}
\end{center}
\section{Proofs associated with the deterministic dynamical system}

\subsection{Proof of Proposition~\ref{prop:cv_sys_matching}}
\label{proof:2SpecMatching}

\paragraph{Proof of $i)$}
In this case, we show that the equilibrium $(\bar{n},0)$ is asymptotically stable for any positive initial condition. We prove this with the help of the following Lyapunov function
\[ V(n,h)=n-\frac{b-d}{c}\log(n)+\frac{p}{\rho}h\, .\] 
Its derivative along trajectories is
\[
\frac{d}{dt} V(n(t),h(t)) =-c\left(n(t)-\frac{b-d}{c}\right)^2 -h(t)^2\frac{p\gamma}{\rho} +h(t)p\left(\frac{b-d}{c}+\frac{\beta-\delta}{\rho}\right)\, .
\]
The last term is always negative because of condition~\eqref{eq:cond_equilibria_matching} being violated. This ensures that $t\mapsto V(n(t),h(t)) $ decreases with time and only vanishes for $(n,h)=(\frac{b-d}{c},0)$, which implies the convergence of trajectories to $(\bar{n},0)$.
\paragraph{Proof of $ii)$}

We now assume that condition~\eqref{eq:cond_equilibria_matching} holds, which implies that the equilibrium $(\hat{n},\hat{h})$ is positive.
To show that this equilibrium is asymptotically stable for any positive initial condition, we define the following Lyapunov function:
\begin{equation}
\label{eq:Lyap_2species}
V(n,h) := \frac{\hat{n}}{p}\left[\frac{n}{\hat{n}}-1- \log\left(\frac{n}{\hat{n}}\right)\right] +\frac{\hat{h}}{\rho}\left[\frac{h}{\hat{h}}-1- \log\left(\frac{h}{\hat{h}}\right)\right] \, .
\end{equation}
Derivation of $V(n,h)$ along a trajectory yields
\begin{align}
\label{eq:diff_lyap_2m}
\frac{d}{dt} V(n(t),h(t)) &= \frac{\hat{n}}{p}\left[\frac{d}{dt}\frac{n(t)}{\hat{n}} - \frac{\frac{d}{dt}n(t)}{n(t)}\right] +\frac{\hat{h}}{\rho}\left(\frac{d}{dt}\frac{h(t)}{\hat{h}} - \frac{\frac{d}{dt}h(t)}{h(t)}\right)\nonumber\\
&= \frac{1}{p}(n(t)-\hat{n}) (b-d-cn(t)-ph(t))  +\frac{1}{\rho}(h-\hat{h})(\beta-\delta-\gamma h(t)+ \rho n(t))\nonumber\\
&=-\frac{c}{p}(n(t)-\hat{n})^2-(n(t)-\hat{n})(h(t)-\hat{h}) -\frac{\gamma}{\rho}(h(t)-\hat{h})^2 +\frac{1}{\rho}\rho(h(t)-\hat{h})(n(t)-\hat{n})\nonumber\\
&= -\frac{c}{p}(n(t)-\hat{n})^2-\frac{\gamma}{\rho}(h(t)-\hat{h})^2\, ,
\end{align}
where we used in the last line that $(\hat{n}, \hat{h})$ is the positive equilibrium.
This ensures that $t\mapsto V(n(t),h(t)) $ decreases with time and only vanishes for $(n,h)=(\hat{n},\hat{h})$, which implies the convergence of to this equilibrium.

\paragraph{Proof of $iii)$}
The case where $\bar{h}\geq \frac{c}{p}\bar{n}$ follows along the same lines as $i)$ with $V(n,h) = h - \frac{\beta-\delta}{\gamma}\log(h) + n \frac{\rho}{p}$. 

\subsection{Proof of Proposition~\ref{prop:cv_syst_4}}
\label{proof:4SpeciesCoexist}
The proof is very similar the the case of two matching species above. We again define a Lyapunov function and show that its derivative along trajectories of the dynamical system is negative. We define the Lyapunov function as follows:
\begin{equation}
\label{eq:Lyap_4species}
V(n,h) = \sum_{i=0}^1 n_i-n^* \log(n_i) +\frac{p}{\rho} \left( h_i-h^* \log(h_i) )\, . \right)
\end{equation}
Its derivative along a trajectory is given by ($n = n_0+n_1, h=h_0+h_1$)
\begin{align*}
\frac{d}{dt} V(n(t),h(t)) 
&=\sum_{i=0}^1 (n_i-n^*)(b-d-c n(t)-ph_i(t)) + \frac{p}{\rho} (h_i(t)-h^*)(\beta-\delta-\gamma h(t)+\rho n_i(t))\\
&=\sum_{i=0}^1 (n_i-n^*)(2cn^*+ph^*-c n(t)-ph_i(t)) \\
&\qquad \qquad + \frac{p}{\rho} (h_i(t)-h^*)(2\gamma h^*-\rho n^*-\gamma h(t)+\rho n_i(t))\\	
&= -c\left(n_0 + n_1 - 2 n^\ast\right)^2-\frac{p \gamma}{\rho}\left(h_0 + h_1 - 2 h^\ast\right)^2 
\\
	&\le  0\, ,
\end{align*}
where we used that the equilibrium is positive, which follows from condition~\eqref{eq:cond_4_types}.
We then have that $t\mapsto V(n(t),h(t))$ decreases with time and only vanishes for $(n_0, n_1, h_0, h_1)=(n^*, n^*,h^*,h^*)$ which finishes the proof.

\subsection{Proof of Proposition~\ref{prop:cv_3types}}
\label{proof:3Species}

We study the dynamics of the proportion of type 0 prey denoted by $a(t) = n_0(t)/(n_0(t)+n_1(t))$. The dynamics are
\begin{equation}
    \frac{da(t)}{dt} = -p\ a(t)(1-a(t)) h_0(t)\, .
\end{equation}
As long as the type 0 predator density $h_0$ is positive, which is the case because $\beta-\delta >0$, the number of type 0 prey is therefore decreasing, which implies the result.

\subsection{Proof of Proposition~\ref{prop:syst-3-matching}}
\label{proof:3types-matching}
We study the dynamics of the proportion of type 1 predators, denoted by $\alpha(t) = h_1(t)/(h_0(t)+h_1(t))$. The dynamics are
\begin{equation}
    \frac{d\alpha(t)}{dt} = -\rho\ \alpha(t)(1-\alpha(t)) n_0(t)\, .
\end{equation}
This proportion is decreasing and converges to zero if $n_0(t)$ is positive for all times $t$. 
To show this, we note that because condition~\eqref{eq:cond_equilibria_matching} holds, the matching equilibrium exists. This implies that $n_0(t)>0$ for all $t\geq 0$ because the predator population $h_0$ is bounded from above by the following process:
\begin{equation}
    \frac{dh_0}{dt} = h_0 (\beta-\delta-\gamma(h_0+h_1) + \rho n_0) \le h_0 (\beta-\delta-\gamma h_0 + \rho n_0) = \frac{dh^+}{dt}\, ,
\end{equation}
where the right-hand side is the predator dynamics of the matching species system in Prop.~\ref{prop:cv_sys_matching}. 
Therefore $n_0(t)>0$ and thus the proportion of type 1 predators, $\alpha$, decreases monotonously for all $t\ge 0$, which concludes the proof.

\subsection{Proof of Proposition \ref{prop:syst-3}}
\label{proof:PredProportion}

\paragraph{Proof of $i)$}
The first result derives from the Jacobian matrix of the system \eqref{eq:syst-3} at the equilibrium $(0, \alpha \bar{h}, (1-\alpha)\bar{h})$, which equals
$$\begin{pmatrix}
b-d-\alpha_0 p \bar{h}&0&0\\
-\alpha_0 \bar{h}\beta&-(\beta-\delta)\alpha_0&-(\beta-\delta)\alpha_0\\
0&-(\beta-\delta)(1-\alpha_0)&-(\beta-\delta)(1-\alpha_0)
\end{pmatrix}$$
The eigenvalues are $b-d-\alpha_0 p \bar{h}$, $-(\beta-\delta)$ and $0$, which leads to the result. 
\paragraph{Proof of $ii)$}
Let us first prove the convergence of the solution as time goes to infinity.\\
Let us first consider the dynamics of the proportion of type $0$ predators which solves \eqref{eq:alpha_t}:
$$\frac{d\alpha(t)}{dt}=\rho n(t) \alpha(t)(1-\alpha(t))\ge 0.$$
As a consequence $\alpha(t$) increases and is bounded by $1$, and thus converges for $t$ to infinity towards $\alpha_\infty$. 
In the limit we have either $\alpha_\infty\in\{0,1\}$ or $\lim_{t\to\infty}n(t)=0$. 
However, because $\alpha(0)>0$, the convergence to $0$ is impossible.
Moreover, since $\hat{n}<0$, the equilibrium $(\hat{n},\hat{h},0)$ does not exist.
This entails that $\lim_{t\to\infty}n(t)=0$ and $\alpha_\infty\in]0,1]$.\\
As a consequence, the solution converges to $(0, \alpha_\infty\bar{h},(1-\alpha_\infty)\bar{h}) $ and we deduce from point $i)$ that necessarily, $\alpha_\infty \geq \alpha_c$. 

Moreover, since $t \mapsto \alpha(t)$ is increasing, we can write, with an abuse of notation $n_0(\alpha) = n_0(\alpha(t))$ and $h_0(\alpha) = h_0(\alpha(t))$, where
\begin{equation}\label{eq:phase_pred_prey}
    \left\lbrace
    \begin{aligned}
    & \frac{d n_0(\alpha)}{d \alpha} = \frac{p\bar{h}}{\rho} \ \frac{\alpha_c-\alpha}{\alpha(1-\alpha)} - \frac{c}{\rho}\ \frac{n_0(\alpha)}{\alpha(1-\alpha)} - \frac{p}{\rho}\ \frac{h_0(\alpha)-\bar{h}}{1-\alpha}\,, \\
    & \frac{d h_0(\alpha)}{d \alpha} = \frac{\gamma}{\rho} \ \frac{h_0(\alpha) (\bar{h} - h_0(\alpha))}{n(\alpha) \alpha(1-\alpha)} + \frac{ h_0(\alpha)}{1-\alpha}. 
    \end{aligned}
    \right.
\end{equation}

It  follows from \eqref{eq:phase_pred_prey} that the flow $(n_0(\alpha),h_0(\alpha))$ is continuously differentiable and that furthermore $\partial_\alpha n_0(\alpha_\infty)\neq 0$ if $\alpha_\infty \neq \alpha_c$ and that $\partial_\alpha h_0(\alpha_\infty)\neq 0$ for $\alpha_\infty\in (0,1)$. Setting $f(\alpha,n_0(0),\alpha_0)=(n_0(\alpha),h_0(\alpha)-\bar{h})$, the implicit function theorem implies the existence of a smooth function $\kappa_0(n_0(0),\alpha_0)=\alpha_\infty$. 

To apply this reasoning, it remains to show that $\alpha_\infty \neq \alpha_c$.  

Suppose that $\alpha_\infty = \alpha_c$, and let us obtain a contradiction.
In view of \eqref{eq:phase_pred_prey}, we see that if $\alpha_\infty=\alpha_c$ then $\left.\frac{d n}{d \alpha}\right|_{\alpha = \alpha_c} = 0$.
Since $n_0(t) \geq 0$,  we then should have $\left.\frac{d^2n}{d\alpha^2}\right|_{\alpha = \alpha_c} \geq 0$.
However, an elementary computation using \eqref{eq:phase_pred_prey} shows that
\begin{equation*}
    \left.\frac{d^2n}{d\alpha^2}\right|_{\alpha = \alpha_c} = - \frac{p \bar{h}}{\rho} \frac{1
    \alpha_c}{ (1-\alpha_c)} < 0\, ,
\end{equation*}
which is a contradiction.
Hence $\alpha_\infty > \alpha_c$.

\paragraph{Proof of $iii)$}
We first consider the total predator population  $h(t)=h_0(t)+h_1(t)$ which solves
$$\frac{d}{dt} h(t) = h(t)\left[\beta-\delta-\gamma h(t)+\rho \alpha(t) n_0(t) \right]$$
Therefore considering our initial condition $n_0(0)>0$, we have $\frac{d}{dt} h(0)>0$ and we notice that for any positive time $t_0$ such that $h(t_0)=\bar{h}$, the derivative will be positive as well. This entails that $h(t)>\bar{h}$ for all $t>0$.

The population dynamics of the invading prey species (type 0 prey) is given by
\begin{align*}
\frac{dn_0}{dt} &= n_0(t) \left(b-d - p \bar{h} \alpha(t) - c n_0(t) - p \alpha(t) (h(t) - \bar{h})\right) \\
	&< n_0(t) \left( b-d - p \bar{h}\alpha(t)\right) \\
		&= n_0(t) p \bar{h} \left( \alpha_c - \alpha(t)\right)\, ,
	\end{align*}	 
which holds because the total predator population satisfies $h(t)>\bar{h}$.
	
Using \eqref{eq:phase_pred_prey} we deduce
\begin{equation}\label{eq:phase_prey}
	\frac{dn}{d\alpha} = \frac{p\bar{h}}{\rho} \ \frac{\alpha_c-\alpha(t)}{\alpha(t)(1-\alpha(t))} - \frac{c}{\rho}\ \frac{n(\alpha(t))}{\alpha(t)(1-\alpha(t))} - \frac{p}{\rho}\ \frac{h(\alpha(t))-\bar{h}}{1-\alpha(t)} \leq \frac{p\bar{h}}{\rho}\ \frac{\alpha_c-\alpha(t)}{\alpha(t)(1-\alpha(t))}\, . 
	\end{equation}
These dynamics are illustrated by the red line in Fig.~\ref{fig:fcurve}.

Integrating this inequality from $t=0$ to $+\infty$ and using the fact that $\alpha(t)$ increases, we obtain
\begin{equation}
\label{eq:maj1} -\frac{\rho}{p\bar{h}}n_0(0) \leq \int_{\alpha_0}^{\alpha_\infty} \frac{\alpha_c-\alpha'}{\alpha'(1-\alpha')} d\alpha' \, ,
\end{equation}
where we suppressed the time-dependence of the frequency of type 0 predators. The integral explicitly solves as
\begin{equation}
\label{eq:maj2} \int_{\alpha_0}^{\alpha_\infty} \frac{\alpha_c-\alpha'}{\alpha'(1-\alpha')}d\alpha' = f(\alpha_0)-(\alpha_\infty)\, ,
\end{equation}
where $f(\alpha) = - \alpha_c \log(\alpha) - (1-\alpha_c)\log(1-\alpha)$. The function $f$ and the right-hand side of Eq.~\eqref{eq:maj1} are shown as the black and blue-dashed lines in Fig.~\ref{fig:fcurve}.

Combining \eqref{eq:maj1} and \eqref{eq:maj2} we get
	\[ f(\alpha_\infty) \leq f(\alpha_0) + \frac{\rho}{p\bar{h}}n_0(0)\,.\]
A simple computation gives that for all $\alpha<\alpha_c$, we have
\begin{align*}
f(1-\alpha)-f(\alpha) &= -(2\alpha_c - 1) (\log(1-\alpha)-\log(\alpha)) \\
&= (1-2\alpha_c) \log\left(\frac{1-\alpha}{\alpha}\right) \\
	&> (1-2\alpha_c) \log\left(\frac{1-\alpha_c}{\alpha_c}\right) \,,
\end{align*}
which is positive if and only if $\alpha_c < 1/2$, which is true by assumption.

We therefore find that
\[ f(\alpha_\infty)  < f(1-\alpha)-(1-2\alpha_c)\log\left(\frac{1-\alpha_c}{\alpha_c}\right)+ \frac{\rho}{p\bar{h}}n_0(0)\, .\]
Letting $n_0(0)$ tend to $0$ we obtain
\[ f(\kappa_0(\alpha_0))  \leq f(1-\alpha_0)-(1-2\alpha_c)\log\left(\frac{1-\alpha_c}{\alpha_c}\right)\, <f(1-\alpha_0) ,\]
which then implies that  $\kappa_0(\alpha_0)< 1-\alpha_0$ since $f$ is monotonously increasing on $[\alpha_c,1]$, and both $\kappa_0(\alpha_0)$ and $1-\alpha_0$ are larger than $\alpha_c$ .
This concludes the proof.

\begin{figure}[t]
	\centering
	\includegraphics[width=.75\textwidth]{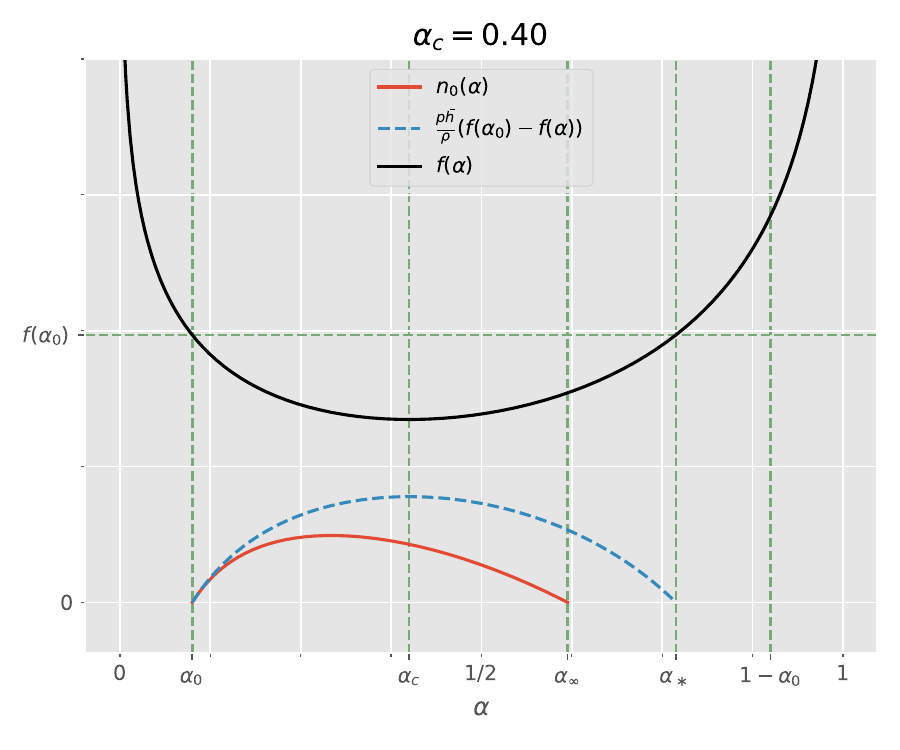}
	\caption{\textbf{Function $f$ as a function of predator proportion $\alpha$.}}
	\label{fig:fcurve}
\end{figure}

\section{Proofs associated with the stochastic behaviours}
\label{app:sto}
For sake of convenience, we shall use the following notations. When considering the non-rescaled process in $\N^4$, we write $\zbf = (N_0,N_1,H_0,H_1)$. We will note the $K$ dependency when considering the scaled version , and recall \eqref{eq:notation}
\begin{equation*}
\zbf^{K} = (n_0^K,n_1^K,h_0^K,h_1^K)  = \left(\frac{N_0}{K},\frac{N_1}{K},\frac{H_0}{K^m}, \frac{H_1}{K^m} \right)
\end{equation*}
Let us recalm the infinitesimal generator $\mathcal{L}_{sc}^K$ associated with $\mathbf{Z}^K_{sc}$ defined in \eqref{eq:gen_sc}, for all measurable functions $f:\R^4\to \R$
\begin{align*}
\mathcal{L}^K_{sc}f(\zbf^K)&=
\sum_{i=0}^1 bn_i^K(1-v_K) K(f(\zbf^K +\frac{1}{K} e_{n_i})-f(\zbf^K))  +  bn_i^K v_K K (f(\zbf +\frac{1}{K} e_{n_{1-i}})-f(\zbf^K))\nonumber\\
& + n_i^K K(d+c (n_i^K+n_{1-i}^K) + p h_i^K) (f(\zbf^K-\frac{1}{K}e_{n_i})-f(\zbf^K))\nonumber\\
&+   h_i^K(\beta+\rho n_i^K)(1-\vartheta_K) K^m(f(\zbf^K +\frac{1}{K^m} e_{h_i})-f(\zbf^K))\nonumber\\
&+  h_i^K (\beta+\rho n_i^K)\vartheta_K K^m (f(\zbf^K +\frac{1}{K^m} e_{h_{1-i}})-f(\zbf^K))\nonumber\\
& + h_i^K K^m(\delta+\gamma (h_i^K+h_{1-i}^K)) (f(\zbf^K-\frac{1}{K^m}e_{h_i})-f(\zbf^K))
\end{align*}
where $(e_{n_0}, e_{n_1}, e_{h_0}, e_{h_1})$ stands for the canonical basis of $\R^4$.
We will use at several places the following decomposition for $f$ a measurable and $\mathcal{C}^1$ function
\begin{equation}
    \label{eq:generator_approx}
\mathcal{L}^K_{sc}f(\zbf^K) = F(\zbf^K) \cdot \nabla f(\zbf^K) + \mathcal{R}^K_1f(\zbf^K)+\mathcal{R}^K_2(\zbf^K)
\end{equation}
where $F : \R_+^4 \to \R^4$ is the direction of the flow of the associated deterministic system defined for all $(n_0,n_1,h_0,h_1)\in(\R_+)^4$ by \eqref{eq:syst-4}:
\begin{align*}
F((n_0, n_1, h_0, h_1)) &= \begin{pmatrix}
n_0 (b-d - c(n_0+n_1) - ph_0) \\
n_1 (b-d - c(n_0+n_1) - ph_1) \\
h_0 (\beta-\delta - \gamma (h_0+h_1) + \rho n_0) \\
h_1 (\beta-\delta - \gamma (h_0+h_1) + \rho n_1)
\end{pmatrix},
\end{align*}
and $\mathcal{R}^K_1f(\zbf^K)$ and $\mathcal{R}^K_2(\zbf^K)$ are rest terms involved in neglecting small effects of mutations and the Taylor expansion of $f$. The detailed decomposition will be stated in the proofs below.

\subsection{Proof of Proposition \ref{prop:stability_eq}}
\label{app:proof_lyap}
In this proof we use the Lyapunov function associated with the two matching type population in order to prove that the stochastic birth and death process will remain close to the equilibrium as long as small populations remain small.\\
Recall that for $ \varepsilon > 0 $ and $ \eta > 0 $, we define
\begin{multline*}
	\theta^K_1 := \inf \Bigg\lbrace t \geq 0 : \prey_1(t) > \varepsilon \eta K \text{ or } \pred_1(t) > \varepsilon \eta K^m \\ \left. \text{ or } \left| \frac{1}{K}\prey_0(t) - \hat{n} \right| >  \varepsilon \text{ or } \left| \frac{1}{K^m} \pred_0(t) - \hat{h} \right| >  \varepsilon \right\rbrace.
\end{multline*}

We recall the Lyapunov function defined in \eqref{eq:Lyap_2species} associated with the deterministic system \eqref{eq:2-matching},  $\forall (n,h)\in(\R_+)^2$
\begin{align*}
	V(n,h) := \frac{\hat{n}}{p} \left( \frac{n}{\hat{n}} - 1 - \log\left( \frac{n}{\hat{n}} \right) \right) + \frac{\hat{h}}{\rho} \left( \frac{h}{\hat{h}} - 1 - \log \left( \frac{h}{\hat{h}} \right) \right).
\end{align*}
We note that $ V(n,h) \geq 0 $ and $ V(n,h) = 0 $ if and only if $ n = \hat{n} $ and $ h = \hat{h}$.		
Fix $ \nu \in (0, \hat{n} \wedge \hat{h}) $  and let $ \mathcal{V}_\nu $ be defined as
\begin{align*}
\mathcal{V}_\nu := \lbrace n > 0, h > 0 : | n - \hat{n} | \leq \nu \text{ and } | h - \hat{h} | \leq \nu \rbrace. 
\end{align*}
By convexity of $V$, for all $\nu$, there exists a constant $\newC{convex}^\nu$ such that
\begin{equation}
\label{eq:Vconvex}
(n,h)\notin \mathcal{V}_\nu \quad \Rightarrow \quad V(n,h)\ge \C{convex}^\nu((n-\hat{n})^2+(h-\hat{h})^2)\ge \C{convex}^\nu \nu^2.
\end{equation}
As a consequence
\begin{multline} \label{event_inclusion}
	\lbrace | \prey_0(t_K \wedge \theta^K_1) - n^* K | >  \varepsilon K \text{ or } | \pred_0(t_K \wedge \theta^K_1) - h^* K^m | >  \varepsilon K^m \rbrace \\
	\subset \left\lbrace V\left( \frac{\prey_0(t_K \wedge \theta^K_1)}{K}, \frac{\pred_0(t_K \wedge \theta^K_1)}{K^m} \right) > \C{convex}^\varepsilon \varepsilon^2 \right\rbrace.
\end{multline}
But, by the Markov inequality, for any $\lambda$
\begin{equation}
\label{eq:MarkovV}
\P\left(V\left( \frac{\prey_0(t_K \wedge \theta^K_1)}{K}, \frac{\pred_0(t_K \wedge \theta^K_1)}{K^m} \right) >\C{convex}^\varepsilon \varepsilon^2\right) \leq \frac{\E\left[e^{\lambda V\left( \frac{\prey_0(t_K \wedge \theta^K_1)}{K}, \frac{\pred_0(t_K \wedge \theta^K_1)}{K^m} \right)}-1\right]}{e^{ \C{convex}^\varepsilon \varepsilon^2 \lambda}-1}.
\end{equation}
For $ \lambda > 0 $, we define $ G_\lambda : \R^4 \to \R_+ $ by
\begin{align*}
	G_{\lambda}(\zbf) := \exp\left( \lambda V\left(n_0, h_0 \right) \right) \quad \text{ for } \zbf=(n_0,n_1,h_0,h_1)\,.
\end{align*}

The rest of the proof is devoted to obtain an exponential bound on $\E[G_\lambda(\mathbf{Z}^K(t_K\wedge \theta^K_1))]$.
Recall that $\mathcal{L}^K_{sc}$, the infinitesimal generator of $\mathbf{Z}^K_{sc}$, is given in \eqref{eq:gen_sc}. Similarly as in \eqref{eq:generator_approx}, let us write for $\zbf^K$ as in \eqref{eq:notation}
\begin{align}
\label{eq:dec_generator}
	\mathcal{L}^K_{sc} G_\lambda (\zbf^K) &=\mathcal{L}^{match}G_{\lambda}(\zbf^K) +  R_1^KG_{\lambda}(\zbf^K) + R_2^KG_{\lambda}(\zbf^K),
	\end{align}
where $\mathcal{L}^{match}G_{\lambda}(\zbf)$ is the generator associated with the two matching type deterministic system \eqref{eq:2-matching} defined for all $\zbf=(n_0,n_1,h_0,n_1)\in(\R_+)^4$ by
\begin{align*}
    \mathcal{L}^{match}G_{\lambda}(\zbf) &=
 n_0(b-d-cn_0-ph_0) \partial_{n_0} G_{\lambda}(\zbf) + \left( \beta + \rho n_0-\delta -\gamma h_0 \right) h_0 \partial_{h_0}G_{\lambda}(\zbf)\\
&=
 n_0(b-d-cn_0-ph_0) \partial_n V\left( n_0,h_0 \right)\lambda e^{\lambda V\left( n_0,h_0 \right)} \\
&\quad + \left( \beta + \rho n_0-\delta -\gamma h_0 \right) h_0 \partial_h V\left(n_0,h_0 \right) \lambda e^{\lambda V\left( n_0,h_0 \right)} 
\end{align*}
and the two rest terms: the first term $ R_1^KG_{\lambda}(\zbf^K)$ comes from the small order terms in the birth and death rates of the $ N_0 $ and $ H_0 $ populations (which either involve mutations or the competition pressure resulting from the - small - $ N_1 $ and $ H_1 $ populations)
\begin{align}
R_1^KG_{\lambda}(\zbf^K)&= b v_K (n_1^K - n_0^K) K\left( G_\lambda(\zbf^K+\frac{1}{K} e_{n_0})-G_\lambda(\zbf^K)\right)\nonumber\\
&	+ c n_1^K n_0^K K \left( G_\lambda(\zbf^K-\frac{1}{K} e_{n_0})-G_\lambda(\zbf^K)\right) \nonumber\\
&	+ \vartheta_K \left( \left( \beta + \rho n_1^K \right) h_1^K - \left( \beta + \rho n_0^K \right) h_0^K  \right) K^m \left( G_\lambda(\zbf^K+\frac{1}{K^m} e_{h_0})-G_\lambda(\zbf^K)\right)\nonumber\\
&	+ \gamma h_1^K h_0^K K^m \left( G_\lambda(\zbf^K-\frac{1}{K^m} e_{h_0})-G_\lambda(\zbf^K) \right)\,.
\label{eq:R1}
\end{align}

The second term $ R_2^KG_{\lambda}(\zbf^K) $ comes from the Taylor expansion of the function $G_\lambda$ used to approximate the jump terms in the generator.
It ready
\begin{align*}
	R_2^KG_{\lambda}(\zbf^K)&= b n_0^K K \Phi_\lambda(\lambda,\zbf^K +\frac{1}{K} e_{n_i}),\zbf^K) 
	+ \left( d + c n_0^K + p h_0^K \right)n_0^K K \Phi_\lambda\left( \zbf^K-\frac{1}{K}e_{n_0}, \zbf^K \right) \\
&	+ \left( \beta + \rho n_0^K\right) h_0^K K^m \Phi_\lambda\left(\zbf^K +\frac{1}{K^m} e_{h_{0}},\zbf^K  \right) 
	+ \left( \delta + \gamma h_0^K \right) h_0^K K^m\Phi_\lambda\left(\zbf^K -\frac{1}{K^m} e_{h_{0}},\zbf^K\right),
\end{align*}
where  $ \Phi_\lambda $ is the first order Taylor expansion of $G_\lambda$ :
\begin{align*}
	&\Phi_\lambda(\lambda, \zbf^K,\mathbf{w}^K ) = G_\lambda(\zbf^K)-G_\lambda(\mathbf{w}^K) -\langle \nabla G_\lambda(\mathbf{w}^K), \zbf^K-\mathbf{w}^K  \rangle.
\end{align*}

Let us first rearrange the first terms in $ \mathcal{L}^{match}G_\lambda $ as follows.
Replacing $ \partial_n V(n,h) $ and $ \partial_h V(n,h) $ by their expressions, we obtain with similar computations as in \eqref{eq:diff_lyap_2m}
\begin{align*} 
\mathcal{L}^{match} G_{\lambda} (\zbf^K) = - \left[ \frac{c}{p} \left( n_0^K - \hat{n} \right)^2 + \frac{\gamma}{\rho} \left( h_0^K - \hat{h} \right)^2 \right] \lambda\, G_\lambda(\zbf^K).
\end{align*}
Using the regularity of $V$ there exist constants 
$\newC{x2sup}^\nu > 0 $ (depending on $ \nu $) such that, for any $ (n,h) \in \mathcal{V}_\nu $,
\begin{align} \label{quad_approx_V}
	V(n,h) \leq \C{x2sup}^\nu \left( |n-\hat{n}|^2 + |h-\hat{h}|^2 \right).
\end{align}
Hence there exists a constant $ \newC{gronwall}^\nu > 0 $ such that, if $\zbf^K$ is chosen such that $ \left(n_0^K,h_0^K \right) \in \mathcal{V}_\nu $,
\begin{align}\label{generator_on_V}
\mathcal{L}^{match} G_{\lambda} (\zbf^K)  \leq - \C{gronwall}^\nu V\left( n_0^K,h_0^K\right) \lambda\, G_\lambda(\zbf^K).
\end{align}
Note that $\C{gronwall}^\nu=min(c/p,\gamma/\rho)/\C{x2sup}^\nu$.

We now control the rest terms. Using different Taylor expansions, we prove the following upper bound. The proof is postponed to the end of the section.
\begin{lemma}
\label{lem:rest_term}
    There exist constants $C$ and $C'$ independent of $K$ such that for any $K$ large enough and $t\in[0,\theta_1^K]$,
    \begin{align} \label{bound_rest}
	|R^K_1G_\lambda(\mathbf{Z}^K(t))|+|R^K_2G_\lambda(\mathbf{Z}^K(t))|\leq \, \left(\eta \lambda+\frac{\lambda^2}{K^{m\wedge1}}\right)\Psi\left( \frac{\lambda}{K^{m \wedge 1}} \right) G_{\lambda}(\mathbf{Z}^K(t)),
    \end{align}
    where $\Psi(x) := C\frac{e^{C'x}-1}{x}$.
\end{lemma}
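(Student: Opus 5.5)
The plan is to bound each jump term by a second-order Taylor expansion of $G_\lambda=\exp(\lambda V(n_0,h_0))$, using that on $[0,\theta_1^K]$ the state $(n_0^K,h_0^K)$ stays in $\mathcal{V}_\varepsilon$. On this set $V$ and all its derivatives are bounded, since $n_0^K\ge \hat n-\varepsilon>0$ and $h_0^K\ge\hat h-\varepsilon>0$. We also have $n_1^K\le \varepsilon\eta$ and $h_1^K\le\varepsilon\eta$. The two rest terms are handled separately: $R_1^K$ produces the factor $\eta\lambda$ (plus mutation terms of order $K^{-(v\wedge\vartheta)}$, which we absorb into $\eta\lambda$ for $K$ large), and $R_2^K$ produces the factor $\lambda^2/K^{m\wedge1}$.

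For $R_1^K$, each term has the form (rate)$\times K^{\cdot}\times(G_\lambda(\zbf\pm K^{-\cdot}e)-G_\lambda(\zbf))$. We use the elementary bound
\[
|G_\lambda(\zbf+\Delta e)-G_\lambda(\zbf)| \le \lambda|\Delta|\,\|\partial V\|_\infty\, e^{\lambda|\Delta|\|\partial V\|_\infty}G_\lambda(\zbf),
\]
obtained from the mean value theorem applied to $\exp(\lambda V)$, with $\|\partial V\|_\infty$ taken over a slightly enlarged neighbourhood of $\mathcal{V}_\varepsilon$ (valid once $K^{-(m\wedge1)}$ is small). Multiplying by $K$ (resp.\ $K^m$) cancels $|\Delta|$. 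The prefactors are $c n_1^Kn_0^K\le C\varepsilon\eta$ and $\gamma h_1^Kh_0^K\le C\varepsilon\eta$, and the mutation terms are $O(v_K+\vartheta_K)\le\eta$ for large $K$. Hence $|R_1^KG_\lambda|\le C\eta\lambda\, e^{C'\lambda/K^{m\wedge1}}G_\lambda$, and $e^{C'x}\le \Psi(x)/C\cdot x+1$ gives the form $\eta\lambda\Psi(\lambda/K^{m\wedge 1})$ after adjusting constants. Since $e^{C'x}\le 1+C'x e^{C'x}$ and $(e^{C'x}-1)/x\ge C'$, the bound $e^{C'x}\le C''\Psi(x)$ holds for bounded $x$ and also in general with a suitable $C$.

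For $R_2^K$, we write $\Phi_\lambda$ with the integral form of the Taylor remainder. For a jump of size $\Delta=K^{-1}$ or $K^{-m}$ in a single coordinate,
\[
\Phi_\lambda \le \tfrac12\Delta^2\sup|\partial^2 G_\lambda|,
\]
where $\partial^2G_\lambda=(\lambda\partial^2V+\lambda^2(\partial V)^2)e^{\lambda V}$ and the supremum over the segment is at most $e^{\lambda\Delta\|\partial V\|}G_\lambda(\zbf)$. The rates are $O(K)$ (resp.\ $O(K^m)$), so each term is bounded by $C(\lambda+\lambda^2)\Delta\, e^{C'\lambda\Delta}G_\lambda\le C(\lambda/K^{m\wedge1}+\lambda^2/K^{m\wedge1})e^{C'\lambda/K^{m\wedge1}}G_\lambda$. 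The piece $\lambda/K^{m\wedge1}$ is $\le \eta\lambda$ for $K$ large.

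The main obstacle is getting the precise structure $\Psi(x)=C(e^{C'x}-1)/x$ rather than just $e^{C'x}$. To obtain it, we would use the exact identity
\[
e^{a}-1-a=a^2\int_0^1(1-s)e^{sa}\,ds\le a^2\,\frac{e^{|a|}-1}{|a|},
\]
applied with $a=\lambda(V(\zbf+\Delta e)-V(\zbf))$, where $|a|\le C'\lambda\Delta$. In addition, $V(\zbf+\Delta e)-V(\zbf)-\Delta\partial V$ contributes $O(\lambda\Delta^2)$ times a similar exponential factor. The cost is some care in keeping the constants uniform over $\mathcal{V}_\varepsilon$, which holds because the boundary of the domain stays away from $0$.
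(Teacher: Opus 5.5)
Your proposal is correct and follows essentially the paper's route: a first-order expansion of $G_\lambda=e^{\lambda V}$ for $R_1^K$ (using $n_1^K,h_1^K\le\varepsilon\eta$ and $v_K,\vartheta_K\to0$) and a second-order one for $R_2^K$. The paper writes the exponential factors directly as $\Psi_1(x)=(e^x-1)/x\ge\Psi_2(x)=(e^x-1-x)/x^2$ rather than as $e^{C'x}$. Your absorption of the $O(\lambda/K^{m\wedge1})$ curvature term of $R_2^K$ into $\eta\lambda$ is a detail the paper's displayed bound glosses over. One small correction: turning $e^{C'x}$ into the form $C(e^{C''x}-1)/x$ for all $x>0$ requires enlarging $C'$ as well as $C$ (e.g.\ $xe^{C'x}\le(e^{2C'x}-1)/(2C')$).
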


Let us set
\begin{align*}
	M^K(t) := G_{\lambda}(\mathbf{Z}^K(t\wedge\theta^K_1)) -  G_{\lambda}(\mathbf{Z}^K(0)) - \int_{0}^{t \wedge \theta^K_1} \mathcal{L}^K_{sc}  G_{\lambda}(\mathbf{Z}^K(s))ds.
\end{align*}
Then $ (M^K(t), t \geq 0) $ is a martingale, since the function $G_{\lambda}$ is bounded on a bounded set and $t\mapsto \mathbf{Z}^{K}(t\wedge \theta_1^K)$ remains almost surely in a bounded set. 
As a result,
\begin{align*}
	\E\left[ G_{\lambda}(\mathbf{Z}^K(t\wedge\theta^K_1))\right] = \E\left[ G_{\lambda}(\mathbf{Z}^K(0))\right] + \E\left[{ \int_{0}^{t \wedge \theta^K_1} \mathcal{L}^K_{sc}  G_{\lambda}(\mathbf{Z}^K(s))}\right].
\end{align*}
Combining  \eqref{eq:dec_generator} with \eqref{generator_on_V}
we have 
\begin{align*}
	\E\left[ G_{\lambda}(\mathbf{Z}^K(t\wedge\theta^K_1))\right] &\le \E\left[ G_{\lambda}(\mathbf{Z}^K(0))\right] - \E\left[ \int_{0}^{t \wedge \theta^K_1}  \C{gronwall}^\varepsilon V\left( \frac{\prey_0(s)}{K}, \frac{\pred_0(s)}{K^m}\right) \lambda G_\lambda(\mathbf{Z}^K(s))\right]\\
	&\quad+  \E\left[ \int_{0}^{t \wedge \theta^K_1}	|R^K_1G_\lambda(\mathbf{Z}^K(s))|+|R^K_2G_\lambda(\mathbf{Z}^K(s))|\right].
\end{align*}
Using Lemma~\ref{lem:rest_term}, we see that
\begin{multline} \label{bound_expectation}
	\E\left[ G_\lambda(\mathbf{Z}^K(t\wedge \theta^K_1))-1\right] + \C{gronwall}^\varepsilon \lambda \E\left[ \int_{0}^{t \wedge \theta^K_1} V\left( \frac{\prey_0(s)}{K}, \frac{\pred_0(s)}{K^m} \right) G_\lambda(\mathbf{Z}^K(s)) ds \right] \\
	\leq \E\left[G_\lambda(\mathbf{Z}^K(0))-1\right] + \left( \eta \lambda + \frac{\lambda^2}{K^{m \wedge 1}} \right) \Psi\left( \frac{\lambda}{K^{m \wedge 1}} \right) \E\left[ \int_{0}^{t \wedge \theta^K_1} G_\lambda(\mathbf{Z}^K(s)) ds \right].
\end{multline}
Thus, if we set (temporarily fixing $ K $ and $ t $)
\begin{align*}
	A(\lambda) := \E\left[ \int_{0}^{t \wedge \theta^K_1} G_\lambda(\mathbf{Z}^K(s)) ds \right],
\end{align*}
we obtain from \eqref{bound_expectation} 
\begin{align*}
	\frac{d A(\lambda)}{d\lambda} \leq \frac{-1}{\C{gronwall}^\varepsilon}\E\left[ G_\lambda(\mathbf{Z}^K(t\wedge \theta^K_1))-1\right] + \frac{G_\lambda(\mathbf{Z}^K(0))-1}{\C{gronwall}^\varepsilon \lambda} + \frac{1}{\C{gronwall}^\varepsilon} \left( \eta + \frac{\lambda}{K^{m \wedge 1}} \right) \Psi\left( \frac{\lambda}{K^{m \wedge 1}} \right) A(\lambda).
\end{align*}
Then, using $ G_\lambda(\zbf) \geq 1 $ and \eqref{quad_approx_V} with $\mathbf{Z}^K(0)\in\mathcal{V}_\nu$ 
\begin{align*}
	\frac{d A(\lambda)}{d\lambda} \leq \frac{e^{2 \lambda \C{x2sup}^\nu \nu^2}-1}{\C{gronwall}^\varepsilon \lambda} + \frac{1}{\C{gronwall}^\varepsilon} \left( \eta + \frac{\lambda}{K^{m \wedge 1}} \right) \Psi\left( \frac{\lambda}{K^{m \wedge 1}} \right) A(\lambda).
\end{align*}
Integrating and using Gronwall's inequality, we obtain
\begin{align*}
	A(\lambda) \leq \left( A(0) + \frac{1}{\C{gronwall}^\varepsilon} \int_{0}^{\lambda} \frac{1}{u} (e^{2 u \C{x2sup}^\nu \nu^2}-1) du \right) \exp \left( \frac{1}{\C{gronwall}^\varepsilon} \int_{0}^{\lambda} \left( \eta + \frac{u}{K^{m \wedge 1}} \right) \Psi\left(\frac{u}{K^{m \wedge1}}\right) du \right) .
\end{align*}
Since $ u \mapsto \frac{1}{u}(e^{u C}-1) $ and $\Psi$ are  increasing,
and $ A(0) \le  t $, we obtain
\begin{align*}
	A(\lambda) \leq \left( t  + \frac{1}{\C{gronwall}}(e^{2 \lambda \C{x2sup}^\nu \nu^2}-1) \right) \exp\left( \frac{1}{\C{gronwall}^\varepsilon} \left( \eta \lambda + \frac{\lambda^2}{2 K^{m \wedge 1
	}} \right) \Psi\left( \frac{\lambda}{K^{m \wedge 1}} \right) \right).
\end{align*}
Plugging this into the right hand side of \eqref{bound_expectation} (and noting that the second term on the left hand side is non-negative), we obtain 
\begin{multline} \label{maxi_bound}
	\E\left[ G_\lambda(\mathbf{Z}^K(t\wedge \theta^K_1))-1\right] \le - \C{gronwall}^\varepsilon \lambda \E\left[ \int_{0}^{t \wedge \theta^K_1} V\left( \frac{\prey_0(s)}{K}, \frac{\pred_0(s)}{K^m} \right) G_\lambda(\mathbf{Z}^K(s)) ds \right] \\
\qquad\qquad\qquad\qquad\qquad + \E\left[G_\lambda(\mathbf{Z}^K(0))-1\right] + \left( \eta \lambda + \frac{\lambda^2}{K^{m \wedge 1}} \right) \Psi\left( \frac{\lambda}{K^{m \wedge 1}} \right) A(\lambda)\\
 \leq e^{2 \lambda \C{x2sup}^\nu \nu^2}-1	+ U(\lambda, K)\left( t + \frac{1}{\C{gronwall}^\varepsilon}(e^{2 \lambda \C{x2sup}^\nu \nu^2}-1) \right)\exp\left(\frac{U(\lambda,K)}{\C{gronwall}^\varepsilon} \right),
\end{multline}
where
\[ U(\lambda, K) = \left( \eta \lambda + \frac{\lambda^2}{ K^{m \wedge 1
	}} \right) \Psi\left( \frac{\lambda}{K^{m \wedge 1}} \right). \]

Setting $ \lambda_K = a K^{m \wedge 1} $, where $ a > 0 $ will be chosen later, we then have 
\[U( \lambda_K, K) = \lambda_K (\eta +a) \Psi( a). \] 
Coming back to \eqref{eq:MarkovV}, we have 
\begin{align*} 
\P&\left(V\left( \frac{\prey_0(t_K \wedge \theta^K_1)}{K}, \frac{\pred_0(t_K \wedge \theta^K_1)}{K^m} \right) >\C{convex}^\varepsilon \varepsilon^2\right) \\
&\le 	\frac{\E\left[G_{\lambda_K}(\mathbf{Z}^K(t\wedge \theta^K_1))-1\right]}{e^{\C{convex}^\varepsilon\varepsilon^2 \lambda_K}-1} \\
&\leq \frac{ e^{2 \lambda_K \C{x2sup}^\nu \nu^2}-1	}{e^{\C{convex}^\varepsilon\varepsilon^2 \lambda_K}-1} + \frac{\lambda_K (\eta +a) \Psi( a) }{{e^{ \C{convex}^\varepsilon \varepsilon^2 \lambda_K}-1}}\left( t + \frac{1}{\C{gronwall}^\varepsilon}(e^{2 \lambda_K \C{x2sup}^\nu \nu^2}-1) \right)\exp\left(\frac{\lambda_K (\eta +a)}{\C{gronwall}^\varepsilon} \Psi( a)  \right).
\\
\end{align*}

The first term of the r.h.s can be handled by choosing $ \nu $ small enough so that $ 2 \C{x2sup}^\nu \nu^2 <  \C{convex}^\varepsilon \varepsilon^2 $. With the definitions of both constants $\C{x2sup}^\nu$ and $\C{convex}^\varepsilon$ it is sufficient to choose $\nu \le \varepsilon/\sqrt{2}$.
For the second term note that for all $t \leq e^{\zeta K^{m \wedge 1}} $ it can be bounded by 
\[\newC{fin}^\varepsilon\lambda_K (\eta +a) \Psi( a)  
\exp\left(aK^{m\wedge 1} [\frac{\zeta}{a} \vee  2 \C{x2sup}^\nu \nu^2  + (\eta+a)\Psi(a) -\C{convex}^\varepsilon \varepsilon^2] \right)
\]
for $\C{fin}^\varepsilon$ a positive constant depending only on $\varepsilon$.
In order to conclude, it is therefore sufficient to choose $a$ and $\eta$ small enough such that the exponential term vanishes as $K\to\infty$. As a consequence, there exists a positive $r>0$ such that 
\begin{align*} 
\P&\left(V\left( \frac{\prey_0(t_K \wedge \theta^K_1)}{K}, \frac{\pred_0(t_K \wedge \theta^K_1)}{K^m} \right) >\C{convex}^\varepsilon \varepsilon^2\right) \le C^\varepsilon  K^{1\wedge m} e^{-r K^{1\wedge m}},
\end{align*}
which concludes the proof.

\paragraph{Proof of Lemma \ref{lem:rest_term}} 
Let us first handle $R^K_1G_\lambda(\zbf^K)$ defined in \eqref{eq:R1}. 
Remark that 
\begin{align*}
    G_\lambda(\zbf^K+\Delta)-G_\lambda(\zbf^K) &= e^{\lambda V(\zbf^K+\Delta) }- e^{\lambda V(\zbf^K)}\\
    &= G_\lambda(\zbf^K) \left(e^{\lambda (V(\zbf^K+\Delta)-V(\zbf^K))  }-1 \right)\\
    &=G_\lambda(\zbf^K) \lambda (V(\zbf^K+\Delta)-V(\zbf^K)) \Psi_1(\lambda (V(\zbf^K+\Delta)-V(\zbf^K)) )
\end{align*}
where $\Psi_1(x)=(e^x-1)/x$ is a continuous, increasing function on $\R$, and therefore bounded on any compact interval.
Furthermore, there exists $\newC{taylor1V}^\nu > 0$ such that, for any $ (n,h), (n',h') \in \mathcal{V}_\nu $,
\begin{align} \label{taylor_1_V}
	| V(n', h') - V(n,h) | \leq \C{taylor1V}^\nu \left( |n'-n| + |h'-h| \right),
\end{align}
As a consequence, using the fact that $\forall t\in[0,\theta^K_1]$
 $ N_1^K(t) \leq \eta \varepsilon K $, $ N_0^K(t) \leq (\bar{n}+\varepsilon)K $, $ H_1^K(t) \leq \eta \varepsilon K^m $ and $ H_0^K(t) \leq (\bar{h} + \varepsilon) K^m $, 
and therefore 
\begin{align*}
|R^K_1&G_\lambda(\mathbf{Z}^K(t))|\le    \big| b v_K (\frac{N_1^K}{K} - \frac{N_0^K}{K}) +c \frac{N_1^K}{K}\frac{ N_0^K}{K} \big| K\left(G_\lambda(\mathbf{Z}^K(t)) \lambda\frac{\C{taylor1V}^\varepsilon }{K}\Psi(\lambda \C{taylor1V}^\varepsilon/K)\right)\nonumber\\
&	+ \Big| \vartheta_K \left( \left( \beta + \rho \frac{N_1^K}{K} \right) \frac{H_1^K}{K^m} - \left( \beta + \rho \frac{N_0^K}{K} \right) \frac{H_0^K}{K^m}  \right) +\gamma \frac{H^K_0}{K^m}\frac{H_1^K}{K^m} \Big|\  K^m \left( G_\lambda(\mathbf{Z}^K(t)) \lambda\frac{\C{taylor1V}^\varepsilon }{K^m}\Psi(\lambda \C{taylor1V}^\varepsilon/K^m)\right)\nonumber\\
&\qquad\le  C \C{taylor1V}^\varepsilon (v_K + \eta \varepsilon)   \lambda  \Psi_1\left( \C{taylor1V}^\varepsilon \frac{\lambda}{K} \right) G_\lambda(\mathbf{Z}^K(t)) \\
&\qquad\qquad	+ C \C{taylor1V}^\varepsilon (\vartheta_K + \eta \varepsilon)\lambda \Psi_1\left( \C{taylor1V}^\varepsilon \frac{\lambda}{K^m} \right) G_\lambda(\mathbf{Z}^K(t)).
\end{align*}
where the constant $C$ depends neither on $K$ nor $\varepsilon$.\\
Since $ v_K $ and $ \vartheta_K $ tend to zero as $ K \to \infty $, there exists a constant $ C > 0 $ such that, for any $ K $ large enough and $ t \in [0, \theta^K_1] $, 
\begin{align} \label{bound_R1}
|R^K_1G_\lambda(\mathbf{Z}^K(t))|\leq C^\varepsilon\, \eta \lambda\, \Psi_1\left( \varepsilon C \frac{\lambda}{K^{m \wedge 1}} \right) G_{\lambda}(\mathbf{Z}^K(t)).
\end{align}
A similar analysis can be done for $R^K_2G_\lambda(\mathbf{Z}^K(t))$. Using a second order Taylor expansion for $V$ we obtain that for all  $ (n,h), (n',h') \in \mathcal{V}_\nu $,
\begin{align} \label{taylor_2_V}
	| V(n',h') - V(n,h) - (n'-n) \partial_n V(n,h) - (h'-h) \partial_h V(n,h) | \leq \newC{taylor2V}^\nu \left( |n'-n|^2 + |h'-h|^2 \right).
\end{align}
and that for a constant $ C > 0 $ such that, for $ K $ large enough and all $ t \in [0, \theta^K_1] $,
\begin{align} \label{bound_R2}
|R^K_2G_\lambda(\mathbf{Z}^K(t))| \leq C\varepsilon \frac{\lambda^2}{K^{m\wedge 1}} \Psi_2\left( \C{taylor2V} \frac{\lambda}{K^{m \wedge 1}} \right) G_\lambda(\mathbf{Z}^K(t)),
\end{align}
where $\Psi_2(x) := \frac{e^x-1-x}{x^2}$.

The bound in Lemma \ref{lem:rest_term} then follows from the combination of \eqref{bound_R1}, \eqref{bound_R2} and the fact that $\Psi_1(x)\ge \Psi_2(x)$.

\subsection{Proof of the stability of proportions}
\label{app:proof_proportion}
Let us fix $\varepsilon>0$. 
The proof of Proposition \ref{prop:proportions} relies on a first lemma which controls the change in proportions.
\begin{lemma} 
\label{lem:proportions}
Suppose that the assumptions of Proposition~\ref{prop:accumulation_B} hold. For any $\mathcal{A}>0$, there exists $\varepsilon_0$ such that for any $\xi\in\{1/2,1\}$ and $\varepsilon \leq \varepsilon_0$,
$$ \limsup_{K \to \infty} \P \left(  U_{\varepsilon^{1/6}} 
< R_{\mathcal{A}\varepsilon} \wedge T_{\varepsilon^\xi }  \wedge T_{0 }
\right)
\leq C(\mathcal{A},\xi) \varepsilon^{1/12}, 
$$
where $C(\mathcal{A},\xi)$ is a positive constant.
\end{lemma}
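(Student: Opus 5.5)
We control the predator proportion $\alpha^K(t):=H^K_0(t)/H^K(t)$ through a semimartingale decomposition, in the spirit of Lemma 3.3 in \cite{coron2021emergence}. Applying the generator \eqref{eq:gen_sc} to $g(\zbf)=h_0/(h_0+h_1)$ gives
\[
\alpha^K(t)=\alpha^K(0)+\int_0^t \Big(\rho\,\alpha^K(s)(1-\alpha^K(s))\big(n^K_0(s)-n^K_1(s)\big)+\mathcal{O}(\vartheta_K)\Big)\,ds+M^{\alpha}_t+\int_0^t \mathcal{R}^K_2 g\, ds .
\]
The logistic competition term cancels exactly, because both predator types share the death rate $\delta+\gamma H^K/K^m$. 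For the remainder, on $[0,R_{\mathcal{A}\varepsilon}]$ the total predator population stays above $(\bar h-\mathcal{A}\varepsilon)K^m$. The second-order Taylor remainder $\mathcal{R}^K_2g$ is therefore $\mathcal{O}(K^{-m})$, and the martingale satisfies $\E[\langle M^\alpha\rangle_{t\wedge R_{\mathcal A\varepsilon}}]\le Ct K^{-m}$. The mutation term is $\mathcal{O}(K^{-m\vartheta})$.

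The key step is to bound the drift. Before $T_{\varepsilon^\xi}\wedge T_0$, both prey satisfy $N^K_i\le \varepsilon^\xi K$, so the drift is at most $\rho\,\varepsilon^\xi$ per unit time. Time is of order $\log K$, however, and a naive bound $\rho\varepsilon^\xi\cdot T\log K$ diverges. The integral $\int_0^{t} n^K_i(s)\,ds$ has to be controlled by using the exponential growth of the prey. I would first do this with a comparison argument, then redo it with the Itô/generator identity for $\log(1+N^K_i)$ or for $N^K_i$ itself. Set $\lambda_{\min}>0$ for the minimum absolute growth rate of the prey while $\alpha$ stays in the $\varepsilon^{1/6}$-window and $H^K/K^m$ stays near $\bar h$. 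For each prey, the per capita growth rate $b-d-p h_i - c n$ is bounded away from $0$ in absolute value as long as $\alpha$ stays in the $\varepsilon^{1/6}$-window of $\alpha(0)$; this uses $\alpha(0)\ne\alpha_c,1-\alpha_c$ together with the fact that $\varepsilon$ is small. Integrating the equation $dN_i=N_i(r_i+o(1))dt+dM$ then gives
\[
\int_0^{\tau} n^K_i(s)\,ds \;\lesssim\; \frac{\sup_{s\le\tau} n^K_i(s)}{\lambda_{\min}} + \text{martingale error} \;\lesssim\; \frac{\varepsilon^\xi}{\lambda_{\min}},
\]
where $\tau$ denotes the stopping time at which we evaluate the bound. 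For a growing population this bound comes from the terminal value, and for a decaying one it comes from the initial value, which is at most $\varepsilon^\xi$. Hence the total drift is $\mathcal{O}(\varepsilon^{\xi})$, uniformly in $K$.

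The conclusion then follows from Markov's inequality and Doob's inequality. The variation of $\alpha^K$ up to $R_{\mathcal A\varepsilon}\wedge T_{\varepsilon^\xi}\wedge T_0$ is at most $C\varepsilon^{\xi}$ plus errors that vanish as $K\to\infty$, outside an event of small probability. Exceeding $\varepsilon^{1/6}$ therefore has probability at most $C\varepsilon^{\xi-1/6}\cdot(\ldots)$. To obtain the weaker, uniform rate $\varepsilon^{1/12}$, I would apply Markov's inequality to the square root of the integral: $\P(\int n\,ds>\varepsilon^{1/6})\le \E[\sqrt{\int n\,ds}]/\varepsilon^{1/12}$, and the expectation is controlled through the martingale bounds.

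The main obstacle is the control of $\int n_i^K$. It relies on the growth rate staying bounded away from zero, which in turn requires the proportion to stay in its window. This circularity is resolved by working up to the stopping time $U_{\varepsilon^{1/6}}$ itself. A second difficulty is that the stochastic fluctuations of the prey are not small relative to its size when the prey is small, so expectations must be taken of $\int N_i\,ds/K$ using the generator applied to $N_i$, rather than relying on pathwise exponentials.
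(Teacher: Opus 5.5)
Your proposal is correct and follows essentially the same route as the paper. Both arguments decompose $H^K_0/H^K$ as a semimartingale in which the competition terms cancel exactly, and both control the martingale part through a quadratic variation of order $\log(K)/K^m$. Both bound the drift by applying the generator to a linear function of $(N^K_0,N^K_1)$: the paper's $f=A_0N_0-A_1N_1$ is your growing/decaying split with weights of order $1/\lambda_{\min}$. The conclusion then comes from Markov's inequality on the square root.

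The one step you leave implicit is that the stopping time is at most $C_l\log(K)$ with high probability. The paper gets this by bounding the invading prey from below by a supercritical branching process. You need it so that your $\mathcal{O}(tK^{-m})$ martingale bound, and the mutation and remainder errors, actually vanish as $K\to\infty$.
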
 
We introduce 
\begin{equation} \label{def_tau_eps} \tau_{\varepsilon,K}:=U_{\varepsilon ^{1/8}} \wedge R_{\mathcal{A}\eps}  \wedge T_{\varepsilon^\xi } \wedge T_0 .
\end{equation}
\begin{proof}
The statement of Lemma \ref{lem:proportions} is a direct consequence of the following inequality:
\begin{equation}\label{eq:min1} 
\limsup_{K \to \infty} \P \left( \sup_{t \leq U_{\varepsilon^{1/8}} \wedge R_{\mathcal{A}\varepsilon }
\wedge T_{\varepsilon ^\xi }\wedge T_0 }\left| \frac{H^K_0(t)}{H^K(t)} -
\frac{H^K_0(0)}{H^K(0)} \right|> \varepsilon ^{1/6} \right) \leq C \varepsilon ^{1/12}.
\end{equation}  

To prove \eqref{eq:min1}, we decompose the process $\frac{H^K_0(t)}{H^K(t)} $ as the sum of a square integrable martingale $M_p$
and of a finite variation process $V_p$.
In the vein of Fournier and M\'el\'eard \cite{fournier_microscopic_2004} we represent the population
process in terms of Poisson measures.\\
Let $(Q^{(\varrho)}_{i}(ds,d\theta),i \in \{0,1\}, \varrho\in \{b,d\})$ be four 
independent Poisson random measures on $\R^2_+$ with intensity $dsd\theta$ representing respectively the birth and death events of predators of types $0$ and $1$.
Let us also denote by $\tilde{Q}^{(\varrho)}_{i}(ds,d\theta):=Q^{(\varrho)}_{i}(ds,d\theta)-dsd\theta$ the associated compensated measure, for any $ \varrho \in \{b,d\}, i\in\{0,1\}$. 
We can then write for $t\geq 0$
\[
\frac{H^K_0(t)}{H^K(t)} =  \frac{H^K_0(0)}{H^K(0)}+M_p(t)+V_p(t),
\]
with $M_p$ and $V_p$ such that:

 \begin{align}\label{Mp}
M_p(t) = & \int_{0}^{t} \int_{\R_+} \mathbf{1}_{\{ \theta \leq H_0^K(s-) (\beta+\frac{\rho N_0^K(s-)}{K}) \}}
\frac{H_1^K(s)}{H^K(s-)(H^K(s-)+1)}\tilde{Q}_{0}^{(b)}(ds,d\theta) 
 \\&- \int_{0}^{t} \int_{\R_+} \mathbf{1}_{\{\theta \leq H^K_0(s-) (\delta+\frac{\gamma H^K(s-)}{K^m})\}}\frac{H_1^K(s-)}{H^K(s-)(H^K(s-)-1)}\tilde{Q}_{0}^{(d)}(ds,d\theta) \nonumber
 \\&- \int_{0}^{t} \int_{\R_+} \mathbf{1}_{\{ \theta \leq H_1^K(s-) (\beta+\frac{\rho N_1^K(s-)}{K}) \}}
\frac{H_0^K(s)}{H^K(s-)(H^K(s-)+1)}\tilde{Q}_{1}^{(b)}(ds,d\theta) \nonumber
 \\&+ \int_{0}^{t} \int_{\R_+} \mathbf{1}_{\{\theta \leq H^K_1(s-) (\delta+\frac{\gamma H^K(s-)}{K^m})\}}\frac{H_0^K(s-)}{H^K(s-)(H^K(s-)-1)}\tilde{Q}_{1}^{(d)}(ds,d\theta),  \nonumber
\end{align}
and 
\begin{align*} 
V_p(t) =& \int_{0}^{t}\left( H_0^K(s) (\beta+\frac{\rho N_0^K(s)}{K}) \right)
\frac{H_1^K(s)}{H^K(s)(H^K(s)+1)}ds
 \nonumber\\
 &- \int_{0}^{t} \left(H^K_0(s) (\delta+\frac{\gamma H^K(s)}{K^m})\right)\frac{H_1^K(s)}{H^K(s)(H^K(s)-1)}ds \nonumber
 \\
 &- \int_{0}^{t} \left(H_1^K(s) (\beta+\frac{\rho N_1^K(s)}{K}) \right)
\frac{H_0^K(s)}{H^K(s)(H^K(s)+1)}ds \nonumber
 \\
 &+ \int_{0}^{t} \left( H^K_1(s) (\delta+\frac{\gamma H^K(s)}{K^m})\right) \frac{H_0^K(s)}{H^K(s)(H^K(s)-1)}ds \nonumber,
\\ \end{align*}
which equals after simplifications
 \begin{equation}
 \label{Ap}
V_p(t)= 
\int_{0}^{t}\rho  \frac{H^K_0(t) H^K_1(t) (N^K_0(s)-N^K_1(s))}{K H^K(s)(H^K(s)+1)}ds.
\end{equation}
Using such a decomposition, we find that for $\eps$ small enough, and $C_l$ a constant to be chosen afterwards
\begin{equation} \label{ineq_MA}
\begin{aligned}
 \P \bigg( \sup_{t \leq \tau_{\varepsilon,K}}& \left|\frac{H^K_0(t)}{H^K(t)} -
\frac{H^K_0(0)}{H^K(0)} \right|> \eps^{1/6} \bigg)\\
&\leq \P \left(  \sup_{t \leq \tau_{\varepsilon,K}} \left| M_p(t) \right|>\frac{\eps^{1/6}}{2} \right) +
 \P \left(  
 \sup_{t \leq \tau_{\varepsilon,K}} \left| V_p(t)\right|  > 
\frac{\eps^{1/6}}{2} \right)\\
&\leq \P \left( \sup_{t \leq C_l \log(K) \wedge \tau_{\varepsilon,K}} |M_p(t)| > \frac{\varepsilon^{1/6}}{2} \right) + \P \left( \tau_{\varepsilon,K} > C_l \log(K) \right) +  \P \left(  
 \sup_{t \leq \tau_{\varepsilon,K}} \left| V_p(t)\right|  > 
\frac{\eps^{1/6}}{2} \right)\\
&\leq \frac{2}{\eps^{1/6}}  \E \left[ \left|M_p\Big( C_l \log(K) \wedge \tau_{\varepsilon,K}\Big)\right| \right] +
 \frac{\sqrt{2}}{\eps^{1/12}} \E \left[ \sqrt{\sup_{t \leq \tau_{\varepsilon,K}} 
 \left| V_p(t)\right|}\right] + \P \left( \tau_{\varepsilon,K} > C_l \log(K) \right)\\
&\leq \frac{2}{\eps^{1/6} } \left( \sqrt{\E\left[ M_p^2( C_l \log(K) \wedge \tau_{\varepsilon,K}) \right]} +
\sqrt{\E \left[ \sup_{t \leq\tau_{\varepsilon,K}} \left| V_p(t)\right|\right] } \right)+ \P \left( \tau_{\varepsilon,K} > C_l \log(K) \right),
\end{aligned}
\end{equation}
where we applied Doob maximal, Markov, Cauchy-Schwarz and Jensen inequalities.

To handle the first term in \eqref{ineq_MA}, we use the quadratic variation of the martingale $M_p$ which equals
\begin{align*}
\langle M_p\rangle_{C_l \log(K) \wedge \tau_{\varepsilon,K}} =& \int_{0}^{C_l \log(K) \wedge \tau_{\varepsilon,K}}\left( H_0^K(s) (\beta+\frac{\rho N_0^K(s)}{K}) \right)
\left(\frac{H_1^K(s)}{H^K(s)(H^K(s)+1)}\right)^2ds
 \nonumber\\
 &+ \int_{0}^{C_l \log(K) \wedge \tau_{\varepsilon,K}} \left(H^K_0(s) (\delta+\frac{\gamma H^K(s)}{K^m})\right)\left(\frac{H_1^K(s)}{H^K(s)(H^Ks)-1)}\right)^2ds \nonumber
 \\
 &+ \int_{0}^{C_l \log(K) \wedge \tau_{\varepsilon,K}} \left(H_1^K(s) (\beta+\frac{\rho N_1^K(s)}{K}) \right)\left(
\frac{H_0^K(s)}{H^K(s)(H^K(s)+1)}\right)^2ds \nonumber
 \\
 &+ \int_{0}^{C_l \log(K) \wedge \tau_{\varepsilon,K}} \left( H^K_1(s) (\delta+\frac{\gamma H^K(s)}{K^m})\right)\left( \frac{H_0^K(s)}{H^K(s)(H^K(s)-1)}\right)^2ds\, . \nonumber
\end{align*}
We then have
\begin{equation}\label{eq:majMpbracket}
\begin{aligned}
\langle M_p\rangle_{C_l \log(K) \wedge\tau_{\varepsilon,K}} 
 \leq C\int_0^{C_l \log(K) \wedge\tau_{\varepsilon,K}}\frac{1}{H^K(s)} ds\le C\frac{C_l\log K}{K^m}. 
\end{aligned}
\end{equation}

Let us now consider the last term in \eqref{ineq_MA}. We will couple $\tau_{\varepsilon,K}$ with the invasion time $\widehat T_\varepsilon$ of a supercritical branching process $\widehat N^{K,-}_i$ such that $\tau_{\varepsilon,K} \le \widehat T_\varepsilon$. 
Recall that before $\tau_{\varepsilon,K}$ the proportions of predators with type $0$ (resp. type $1$) is close $\alpha$ (resp. $1-\alpha$). In order to fix ideas, we assume $\alpha>\alpha_c$ and $1-\alpha<1/2<\alpha_c$, which implies that the prey with type $1$ will invade. As such, we can lower bound their populations by a branching birth and death process $\widehat N^{K,-}_1$ starting from $\lfloor K^{(x^0_1 -C_0\nu) -1}\rfloor$ with birth rate $(1-\varepsilon)b$ and death rate $d+ (p(1-\alpha) \bar{h} + 2c\varepsilon+p g(\varepsilon,\nu)$ (a similar construction is used in the main proof on page~\pageref{p:BD_construction}). Note that from the choice of $\alpha>\alpha_c$, then $r^-=(1-\varepsilon)b- d- (p(1-\alpha) \bar{h} - 2c\varepsilon-p g(\varepsilon,\nu)>0$ for $\nu$ and $\varepsilon$ small. Then since $N^K_1(t) \ge \widehat N^{K,-}_1$, we can introduce 
$$\widehat T_\varepsilon=\inf\{t\ge 0 : \widehat N^{K,-}_1(t) = \lfloor\varepsilon K\rfloor\}\ge \tau_{\varepsilon,K}.$$
Choosing $C_l> r^{-}$, we obtain from Lemma A.2 in \cite{bovier_crossing_2019} that 
\begin{equation}
    \label{eq:maj_tau}
    0= \lim_{K\to\infty} \P \left( \widehat T_\varepsilon > C_l \log(K) \right) \ge \lim_{K\to\infty} \P \left( \tau_{\varepsilon,K} > C_l \log(K) \right).
\end{equation}

Hence, it remains to bound the last expectation of \eqref{ineq_MA}.
Very similarly to \cite{coron2021emergence}, we then obtain that
\begin{equation*}
 \sup_{t\leq \tau_{\varepsilon,K}} |V_p(t)|\leq C \int_0^{\tau_{\varepsilon,K}} \frac{N^K(s)}{K}ds.
 \end{equation*}

The main idea is then to find a linear function $f$ depending only on $N_0$ and $N_1$ such that, for all $t \in [0,\tau_{\varepsilon,K})$,
$$\mathcal{L}f(N_0^K(t), N_1^K(t), H_0^K(t), H_1^K(t)) \ge N^K(t),$$
where $\mathcal{L}$ is the infinitesimal generator of $(N_0,N_1,H_0,H_1)$.
Then we would have
\begin{align}
   \E \left[ \sup_{t \leq \tau_{\varepsilon,K}} \left| V_p(t)\right|\right]\le    \E \left[\frac{1}{K} \int_0^{\tau_{\varepsilon,K}} N^K(s)ds\right] &\le \E \left[\frac{1}{K} \int_0^{\tau_{\varepsilon,K}} \mathcal{L}f(N_0^K(s), N_1^K(s)) ds\right]\nonumber \\
    &\le \E\left[\frac{1}{K}(f(N^K_0({\tau_{\varepsilon,K}}), N^K_1(\tau_{\varepsilon,K}))-f(N^K_0(0), N^K_1(0))) \right] \nonumber\\
    &\le C \varepsilon  ,\label{eq:majAp} 
\end{align}
where the last inequality follows from the linearity of the function $f$.

We will now combine \eqref{eq:majMpbracket}, \eqref{eq:majAp} and \eqref{eq:maj_tau}, with \eqref{ineq_MA} to deduce that 
\begin{align*}
  \limsup_{K\to\infty}  \P \bigg( \sup_{t \leq \tau_{\varepsilon,K}} \left|\frac{H^K_0(t)}{H^K(t)} -
\frac{H^K_0(0)}{H^K(0)} \right|> \eps^{1/6} \bigg)
&\leq \frac{C}{\varepsilon^{1/6}}  \sqrt{\varepsilon}   
\end{align*}
\smallskip\\
The end of the proof is devoted to finding such an $f$. 
Recall that $N_0^K$ is invading while $N^K_1$ decreases and consider 
$$f( N_0,N_1,H_0,H_1) = A_0 N_0 -A_1N_1,$$
with $A_0, A_1>0$ to be chosen.
Let us now apply the infinitesimal generator of $(N_0^K,N_1^K,H_0^K,H_1^K)$ to the function $f$.
\begin{equation}\label{eq:generator}
\begin{aligned}
\mathcal{L}f(N_0,N_1,H_0,H_1)&=A_0N_0 \left[ b-d-\frac{c}{K}(N_0+N_1) - \frac{p}{K^m}H_0\right]\\
&\qquad- A_1N_1 \left[ b-d-\frac{c}{K}(N_0+N_1) - \frac{p}{K^m}H_1\right]\\
\end{aligned}
\end{equation}
Let us remark that for all $t < \tau_{\eps,K}$, we have 
$$\left\lvert b-d-\frac{c}{K}(N_0^K(t)+N_1^K(t)) - \frac{p}{K^m}H_0^K(t)-  (b-d-p\alpha\bar{h} )\right\lvert \le C\eps^{1/8}   $$
and
$$\left\lvert b-d-\frac{c}{K}(N_0^K(t)+N_1^K(t)) - \frac{p}{K^m}H_1^K(t) -  (b-d-p(1-\alpha)\bar{h})\right\lvert \le C\eps^{1/8}
$$
and that moreover, 
$b-d-p\alpha\bar{h} >0$ while $(b-d-p(1-\alpha)\bar{h})<0$.
Therefore, for $t \in [0,\tau_\eps)$
\begin{align*}
\mathcal{L}f(N_0^K(t),N_1^K(t),H_0^K(t),H_1^K(t)) &\ge A_0 N_0^K(t) \left[ b-d-p\bar{h}\alpha -C\varepsilon^{1/8}\right] \\
&\qquad+ A_1 N_1^K(t) \left[\left| b-d-(1-\alpha)p\bar{h}\right| -C\varepsilon^{1/8}\right]\\
&\ge N_0^K(t) +N_1^K(t),
\end{align*} 
for $A_0>(b-d-p\bar{h}\alpha)^{-1}>0 $ and $A_1>\left| b-d-(1-\alpha)p\bar{h}\right|^{-1}>0$ and $\varepsilon$ small enough.
\end{proof}

Starting from this result, we can then control the changes in the total predator population size. 
Similarly as in \cite{coron2021emergence} Lemma 3.4 one obtains that there exists $\mathcal{A}_0$ and $\varepsilon_0$ such that for any $\varepsilon\le \varepsilon_0$, 
\[ \limsup_{k\to\infty} \P (R_{\mathcal{A}_0\varepsilon} \le U_{\varepsilon^{1/6}} \wedge T_{\varepsilon }  \wedge T_0) = 0.\]
The main idea for this result is to show that the total predator population stays between two logistic birth and death processes which will stay close to their respective stable equilibrium size $\bar{h}_+$ and $\bar{h}_-$ for an exponential time when $K$ is large, with $\bar{h}_+$ and $\bar{h}_-$ both close to $\bar{h}$.
We leave the details to the (motivated) reader.

\section{Similar results for the cases $D$ and $E$}

In this section, we present lemmas that are needed to complete the proof in cases $D$ and $E$ where prey populations are of macroscopic order and predator populations microscopic. The rest of the probabilistic argument follow a similar path as in case B and C respectively.

\subsection{Results on the dynamics system}
Lastly, we study the following two prey-one predator dynamical system
\begin{equation}\label{eq:sys-3_prey}
\left\{\begin{aligned}
&\frac{dn_0(t)}{dt}= n_0(t)(b-d-c (n_0(t)+n_1(t)))-ph_0(t))\\
&\frac{dn_1(t)}{dt}=n_1(t)(b-d-c (n_0(t)+n_1(t)) )\\
&\frac{dh_0(t)}{dt}=h_0(t)(\beta-\delta-\gamma h_0(t)) +\rho n_0(t))
\end{aligned}\right.
\end{equation}
Analogously to Prop.~\ref{prop:syst-3}, in this case we will show that if predators are non-autonomous ($\beta<\delta$), the system admits a line of equilibria of the form $(a\bar{n},(1-a)\bar{n},0)$ for $a\in[0,1]$. We will also study the dynamics of the type 0 prey proportion $a(t) = n_0(t)/(n_0(t) + n_1(t))$, which has the following dynamics:
\begin{equation}
    \frac{da(t)}{dt} = -p h_0(t) a(t)(1-a(t))\, .
\end{equation}

\begin{prop}[]\label{prop:sys-3_prey}
Assume $\beta<\delta$.
\begin{enumerate}[label=\roman*)]
	\item The Jacobian matrix at the equilibrium $(a\bar{n},(1-a)\bar{n},0)$ admits two negative eigenvalues and a null eigenvalue if and only if $a<a_c$, where $a_c$ is the critical proportion of type $0$ prey above which the type 0 predator can invade:
	\begin{equation}\label{eq:a_c_first}
		a_c = \inf\{a:\beta-\delta + \rho a \bar{n}>0\} = -\frac{\gamma \bar{h}}{\rho \bar{n}}\, .
	\end{equation}
	
	\item Consider the solution of~\eqref{eq:sys-3_prey} with initial condition $(a_0 \bar{n},(1-a_0)\bar{n}, h_0(0))$ so that $h_0(0)>0$ and $a_0>a_c$. Then the solution converges as $t\to\infty$ to $(a_{\infty}\bar{n},(1-a_\infty)\bar{n},0)$ with $a_\infty<a_c$. Additionally, the limiting proportion $a_\infty$ can be written as a function $k$ of the initial conditions with the following properties: $\lim_{h_0\to 0} k(h_0,a_0) = k_0(a_0)$ exists and $k_0(a_0)<a_c$.
	
	\item Assume that $h^*\leq 0$ or equivalently $a_c>1/2$. Then for any $a_0 > a_c$ we have that $k_0(a_0)>1-a_0$.
	
\end{enumerate}
\end{prop}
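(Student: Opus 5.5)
The proof mirrors Proposition~\ref{prop:syst-3}, with the roles of prey and predators exchanged. Throughout, $\bar h<0$ and $a_c=-\gamma\bar h/(\rho\bar n)$.

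\textbf{Point $i)$.} I compute the Jacobian of \eqref{eq:sys-3_prey} at $(a\bar n,(1-a)\bar n,0)$. The $h_0$ row decouples and gives the eigenvalue $\beta-\delta+\rho a\bar n$. The $(n_0,n_1)$ block is $-c\bar n\begin{pmatrix} a & a\\ 1-a & 1-a\end{pmatrix}$, whose eigenvalues are $-c\bar n=-(b-d)$ and $0$. The remaining eigenvalue is therefore negative iff $\rho a\bar n<\delta-\beta=-\gamma\bar h$, that is, iff $a<a_c$.

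\textbf{Point $ii)$.} The key observation is that $da/dt=-p h_0 a(1-a)\le 0$, so $a(t)$ is nonincreasing and converges to some $a_\infty\ge 0$. The total prey $n=n_0+n_1$ satisfies $\dot n=n(b-d-cn)-pa n h_0$. Starting at $n(0)=\bar n$ with $h_0(0)>0$, the derivative is negative at $n=\bar n$, so $n(t)<\bar n$ for $t>0$. Consequently $\dot h_0\le h_0(\beta-\delta-\gamma h_0+\rho a(t)\bar n)$, and $h_0$ stays bounded.

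I then argue as in Proposition~\ref{prop:syst-3}~$ii)$:
\begin{itemize}
\item Either $h_0\to0$, or $a(t)\to 0$.
\item There is no positive equilibrium with $h_0>0$ and $n_1>0$, since the $n_1$ equation forces $n=\bar n$ and then the $n_0$ equation forces $h_0=0$.
\item An equilibrium with $n_1=0$ is excluded because $a(0)<1$ and $a$ is nonincreasing, so $n_1$ never vanishes.
\end{itemize}
Hence $h_0\to0$ and the trajectory converges to the line of equilibria. By point $i)$ this gives $a_\infty\le a_c$.

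To obtain the strict inequality and the function $k$, I parametrize by $a$ (which is decreasing). This gives a planar phase system for $(h_0(a),n(a))$; in particular
\[
\frac{dh_0}{da}=-\frac{\beta-\delta-\gamma h_0+\rho a n}{p\,a(1-a)}.
\]
Applying the implicit function theorem at $h_0=0$ defines $k(h_0(0),a_0)$. To show $a_\infty\neq a_c$, I assume equality and examine the second derivative of $h_0(a)$ at $a_c$: it has the wrong sign to keep $h_0\ge 0$, which is the same contradiction as before. The limit $k_0$ as $h_0(0)\to0$ exists by continuity and monotonicity.

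\textbf{Point $iii)$.} I derive a comparison inequality. Using $n\le\bar n$,
\[
\frac{d h_0}{d t}\le h_0\,\rho\bar n\,(a-a_c),
\]
so that, dividing by $-pa(1-a)h_0\le 0$ (which reverses the inequality),
\[
\frac{d\log h_0}{da}\ge -\frac{\rho\bar n}{p}\,\frac{a-a_c}{a(1-a)}.
\]
Integrating from $a_0$ down to $a_\infty$ requires care, because $\log h_0\to-\infty$; this step is the main obstacle. I would first try to work with $h_0$ itself rather than $\log h_0$.

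If that does not work, I would instead use the prey equation. Writing $\dot n_1=n_1(b-d-cn)$ together with the phase relation, I would look for a conserved-type quantity analogous to $f$, namely
\[
g(a)=a_c\log a+(1-a_c)\log(1-a),
\]
and integrate $\dot h_0=h_0(\ldots)$ against $da/dt$ so as to obtain
\[
\frac{p}{\rho\bar n}\bigl(h_0(\infty)-h_0(0)\bigr)\ \ge\ \int_{a_\infty}^{a_0}\frac{a_c-a}{a(1-a)}\,da\cdot(\text{sign}),
\]
where the left-hand side tends to $0$ as $h_0(0)\to0$. With the constant-in-$h_0$ terms bounded via the lower estimate $n\ge \bar n-O(h_0(0))$, this yields $f(k_0(a_0))\ge f(a_0)$ with $f$ as in \eqref{def:f} (with $\alpha_c$ replaced by $a_c$), up to the correction.

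The conclusion then follows from the symmetry computation $f(1-a)-f(a)=(1-2a_c)\log\frac{1-a}{a}$. For $a_c>1/2$ and $a>a_c$ this is positive, and the monotonicity of $f$ on $[0,a_c]$ gives $k_0(a_0)>1-a_0$.
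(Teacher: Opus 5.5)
Your parts i) and ii) match the paper (block-triangular Jacobian; monotone $a$, $n<\bar n$, and the argument carried over from Proposition~\ref{prop:syst-3}~$ii)$). Part iii), however, does not close as written. It starts from an algebra slip. Since $\dot a=-p\,a(1-a)h_0$, dividing $\dot h_0\le h_0\rho\bar n(a-a_c)$ by $\dot a$ bounds $dh_0/da$, not $d\log h_0/da$: the factor $h_0$ cancels. So the ``$\log h_0\to-\infty$'' obstacle does not exist, and your first inequality, correctly read, is already the key estimate. In time form, with $f(a)=-a_c\log a-(1-a_c)\log(1-a)$, $f'(a)=\frac{a-a_c}{a(1-a)}$ and $\beta-\delta=-\rho\bar n a_c$,
\[
\frac{d}{dt}\Bigl[h_0+\frac{\rho\bar n}{p}f(a)\Bigr]=h_0\bigl(\rho a(n-\bar n)-\gamma h_0\bigr)\le 0 .
\]
Hence $f(a_\infty)\le f(a_0)+\frac{p}{\rho\bar n}h_0(0)$, and letting $h_0(0)\to0$ gives $f(k_0(a_0))\le f(a_0)$. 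This is the paper's argument, written there in the variable $\tilde a=1-a$.

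Your fallback instead ends with $f(k_0(a_0))\ge f(a_0)$, which is the wrong direction. Combined with $f(1-a_0)>f(a_0)$, it gives no comparison between $f(k_0(a_0))$ and $f(1-a_0)$, so the monotonicity of $f$ on $(0,a_c]$ yields nothing. The conclusion needs the chain $f(k_0(a_0))\le f(a_0)<f(1-a_0)$, together with $k_0(a_0)\le a_c$ and $1-a_0<1/2<a_c$. The estimate $n\ge\bar n-O(h_0(0))$ that you invoke to control the correction is also false. However small $h_0(0)$ is, $h_0$ must reach a size bounded below independently of $h_0(0)$ in order to drive $a$ from $a_0$ to below $a_c$. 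During that excursion $n$ drops an order-one amount below $\bar n$. Only the upper bound $n\le\bar n$, which you already have, is needed, and it is exactly what fixes the sign you left undetermined.
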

\begin{proof}

\noindent\textbf{Proof of $i)$} 
This result follows from the Jacobian matrix of the system~\eqref{eq:sys-3_prey} at the equilibrium $(a\bar{n},(1-a)\bar{n},0)$, which is:
$$\begin{pmatrix}
-(b-d)a & -(b-d)a & -pa \bar{n} \\
-(b-d)(1-a) & -(b-d)(1-a) & 0 \\
0 & 0 & \beta-\delta + \rho a \bar{n}
\end{pmatrix}$$
The eigenvalues are $-(b-d)$, $\beta-\delta+\rho a \bar{n}$ and $0$. This yields the result.

\noindent\textbf{Proof of $ii)$}
The proof of $ii)$ follows the same arguments as outlined in Prop.~\ref{prop:syst-3} ii), with the roles of prey and predators interchanged.

\noindent\textbf{Proof of $iii)$}
The proof follows along the same lines as the one of Proposition~\ref{prop:syst-3} iii). Instead of the prevalent prey population starting at proportion $a=a_0$, we will track the proportion of type 1 prey, which increases during a type 0 predator invasion attempt. We denote the type 1 prey frequency by $\tilde{a}=1-a$ and set $\tilde{a}_c = 1-a_c$. The type 1 prey frequency dynamics then read
\[ \frac{d\tilde{a}}{dt} = \tilde{a}(1-\tilde{a})ph_0\, ,\]
and the type 0 predator dynamics are 
\[ \frac{dh_0}{dt} = h_0 \left(\beta-\delta - \gamma (h_0+h_1) + \rho (1-\tilde{a}_0)(n_0+n_1)\right)\, .\]
The total prey population size dynamics are
\[ \frac{dn(t)}{dt} = \frac{d(n_0(t)+n_1(t))}{dt} = n(t) \left(b-d - c n(t) - p (1-\tilde{a})n(t)\right)\, .\]
Then the phase space dynamics are given by
\[ \frac{dh_0}{d\tilde{a}} = \frac{1}{p\tilde{a}(1-\tilde{a})}\left(\rho \bar{n} (\tilde{a}_c-\tilde{a}) - \gamma h_0 + \rho (1-\tilde{a}) \underbrace{(n-\bar{n})}_{\leq 0}\right) \leq \frac{\rho\bar{n}}{p}\ \frac{\tilde{a}_c-\tilde{a}}{\tilde{a}(1-\tilde{a})}\, .\]
Integrating yields
\[ -h_0(0) \leq -\frac{\rho \bar{n}}{p}(g(\tilde{a}_\infty) - g(\tilde{a}))\ ,\]
where $g(a) = -\tilde{a}_c \log(a) - (1-\tilde{a}_c)\log(1-a)$. Setting $f(a) = g(1-a)$ and replacing $\tilde{a}_c$ by $1-a_c$, the result then follows by the same arguments as in the proof of Prop.~\ref{prop:syst-3} iii); note that we need to show $f(a_\infty)<f(1-a)$ as before.

\end{proof}

\end{document}